\newtheorem{defn0}{Definition}[section]
\newtheorem{prop0}[defn0]{Proposition}
\newtheorem{thm0}[defn0]{Theorem}
\newtheorem{lemma0}[defn0]{Lemma}
\newtheorem{corollary0}[defn0]{Corollary}
\newtheorem{example0}[defn0]{Example}
\newtheorem{remark0}[defn0]{Remark}
\newtheorem{conjecture0}[defn0]{Conjecture}
\newenvironment{definition}{\begin{defn0}}{\end{defn0}}
\newenvironment{proposition}{\begin{prop0}}{\end{prop0}}
\newenvironment{theorem}{\begin{thm0}}{\end{thm0}}
\newenvironment{lemma}{\begin{lemma0}}{\end{lemma0}}
\newenvironment{corollary}{\begin{corollary0}}{\end{corollary0}}
\newenvironment{example}{\begin{example0}\rm}{\end{example0}}
\newenvironment{remark}{\begin{remark0}\rm}{\end{remark0}}
\newcommand{\propref}[1]{Proposition~\ref{#1}}
\newcommand{\thmref}[1]{Theorem~\ref{#1}}
\newcommand{\lemref}[1]{Lemma~\ref{#1}}
\newcommand{\exref}[1]{Example~\ref{#1}}
\newcommand{\secref}[1]{Section~\ref{#1}}
\newcommand{\remref}[1]{Section~\ref{#1}}
\def\EAN{ {({\bf k}^{\mathrm N},0)} }            
\def\MS{ {{\bf H}_{{\mathrm N,p}}} }             
\def\MF{ {{\underline{\bf H}}_{{\mathrm N,p}}} } 
\def\kmod{ {{\bf k}-{\bf mod}} }        
\def\ktopmod{ {{\bf k}-{\bf top.mod}} } 
\def\plim{ \underleftarrow{\lim} } 
\def\ilim{ {\lim_{ \stackrel{\longrightarrow}{n}}\mbox{ } } } 
\def\max{{\bf m}}                   
\def\maxn{{\bf n}}                  
\def\res{{\bf k}}                   
\begin{document}
\title[Moduli space]
{{A MODULI SCHEME OF EMBEDDED CURVE SINGULARITIES}}
\author[Juan Elias]{Juan Elias ${}^{*}$}
\thanks{
${}^{*}$Partially supported by MTM2007-67493.\\
\em \indent 2000 Mathematics Subject Classification, \rm 14H20,
14B07,  14D22}
\address{Departament d'Algebra i Geometria\newline
\indent Facultat de Matem\`{a}tiques \newline \indent Universitat
de Barcelona\newline \indent Gran Via 585, 08007 Barcelona, Spain}
\email{{\tt elias@ub.edu}}
 \date{\today}
\maketitle

\medskip
\begin{centerline}{
\begin{minipage}{13cm}
\tableofcontents
\end{minipage}}
\end{centerline}

\baselineskip 15pt

\medskip
\section*{Introduction}

A central problem in  Algebraic Geometry is  the classification of
several  isomorphism classes of objects by  considering their
deformations and studying the naturally related moduli problems,
see \cite{Mum80}, \cite{New78}. This general strategy has also
been applied to singularities. Some classes of singularities with
fixed numerical invariants are studied from the moduli point of
view, i.e. proving the existence of moduli spaces or giving
obstructions to their existence. See for instance \cite{GF94},
\cite{Lau79}, \cite{LP88} and \cite{Zar73}.

 The main purpose of this paper is to prove  the existence of  the moduli space $\MS$
parameterizing the embedded curve singularities of $\EAN$ with an
admissible Hilbert polynomial $p$  and to  study its
 basic properties. The main difference between the
classical projective moduli problems and the case studied here is
that ${\bf H}_{N,p}$  is not  a locally finite type scheme.
Hence the general techniques of construction of moduli spaces of
projective objects do not apply to our problem and we need to
develop specific ones. Since $\MS$ is a projective limit of
$\res$-schemes of finite type we  define  a measure $\mu_p$ in $\MS$
valued in the  completion $\widehat{ \mathcal M}$ of the ring
${\mathcal M}=K_0({\bf Sch})[\, \mathbb L^{-1}]$ where $\mathbb L$
is  the class of $K_0({\bf Sch})$ defined by
the affine line over $\res$.
This measure induces a motivic integration on $\MS$ and enable us to consider a motivic
volume for singularities of arbitrary dimension.
 See \cite{Kon95}, \cite{DL3ECM}, and \cite{Loo02} for the motivic integration on jet schemes.

 In \cite{Eli90}, see also \cite{Eli01}, we characterized  the Hilbert-Samuel
polynomials of curve singularities: we proved that there exists a
curve singularity $C$ with  embedding dimension $b$ and Hilbert
polynomial $p=e_0T-e_1$  if, and only if, either $b=e_0=1$ and
$e_1=0$, or
 $2 \le b \le e_0$, and
 $\rho_{0,b,e_0} \le e_1 \le \rho_{1,b,e_0},$
 see \thmref{compositio} for the definitions of $\rho_{0,b,e_0}$ and $\rho_{1,b,e_0}$.
  Moreover,
for each triplet $(b,e_0,e_1)$ satisfying the above conditions
there is a reduced curve singularity $C \subset {({\bf k}^N,0)}$
with ${\bf k}$ an  algebraically closed field. From this result
and the main result of this paper,
 \thmref{monster},
we deduce that the moduli space ${\bf H}_{N,p}$ is nonempty for
the polynomials  $p=e_0T-e_1$ with  $\rho_{0,b,e_0} \le e_1 \le
\rho_{1,b,e_0}$ for some $b \le N$.


\medskip
\noindent The contents of the paper is the following. The main
purpose of \secref{chartrunc} is to characterize the zero
dimensional closed subschemes $Z \subset \EAN$ for which there
exists a curve singularity $C \subset \EAN$ such that $Z$ is a
truncation of $C$.
In other words we
characterize which zero-dimensional schemes can be lifted to a
curve singularity, \thmref{big-lifting}. The key idea in the proof
of \thmref{big-lifting} is  the control of the dimension of some
lifting of $Z$ obtained  by applying Artin's approximation theorem to
the system of equations defined by some syzygy conditions deduced
from Robbiano-Valla's characterization of standard basis.

It is well known that some properties $\mathcal P$ defined in the
set of curve singularities are finitely determined, i.e. determined
by the $n$-th truncation $C_n$ of $C$ for  $n \ge n_0=n_0(\mathcal
P)$. The most studied finitely determined property is the analytic
type. In \cite{Eli86b} we prove that analytic type is finitely
determined for $n \ge  n_0=2 \mu +1$,  where $\mu$ stands for the
Milnor number of $C$, as a corollary we get that if a property of
curve singularities is invariant by analytic transformations then is
finitely determined. In \cite{Eli86b} we also  prove that "to have
the same tangent cone" or "to have the same Hilbert function" are
finitely determined properties. In the section 3 we attach to any
finitely determined property $\mathcal P$ a rational power series
$MPS_{\mathcal P} \in  {\mathcal M}[T]_{loc}$,
\propref{motivicpoincare}.


In the second section  we introduce the  algebraic families of
curve singularities over a scheme $S$. Notice that the concept of
family is  a key ingredient in a moduli problem.
 We analyze the relationship
between families  and normally flat morphisms and we also give
several explicit examples of families of curve singularities with
fixed Hilbert  polynomial.


The purpose of \secref{somesub} is to construct a moduli scheme
$\MS$   parameterizing the embedded curve singularities of $\EAN$
with fixed Hilbert  polynomial $p$. In the main result of this
section, \thmref{monster}, we establish  the existence of  a ${\bf
k}-$scheme $\MS$ pro-representing the functor of families $\MF$.
We will obtain $\MS$ as an inverse limit of ${\bf k}-$schemes
$\Xi_{n}$ of finite type with affine  morphisms $a_n:\Xi_{n} \to
\Xi_{n-1}$. Notice that in \propref{eles} we prove some properties
of $\Xi_{n}$ as a corollary of \thmref{big-lifting}; in particular
we prove that $\Xi_{n}$ contains the $n$-th truncations of all
curve singularities with Hilbert  polynomial $p$. The key point in
the existence of $\MS$ is the control of the behavior of the
degree one superficial elements given in \cite{Eli86b}; this
enables us to prove that $a_n$ is affine for a big enough $n$. As
a corollary we get that  the cohomological dimension of $\MS$ is
finite, and that there exists a universal family over $\MS$.
 We
end section three by constructing the Hilbert  strata of $\MS$ for
each admissible Hilbert  function, in particular we prove the
existence of a moduli space parameterizing normally flat families.


In the second part of section three we introduce a motivic
measure  $\mu _p$ defined in the algebra of cylinders $\MS$ valued in $\mathcal
 M$.
 By means of $\mu_p$ we  define for a singularity $X$ of arbitrary dimension a
 motivic volume $vol(X) \in \widehat{\mathcal M}$.
Given a  property $\mathcal P$ defined on the set of curve singularities
 with Hilbert polynomial $p$  we define a motivic
 Poincare  series  $MPS_{\mathcal P} \in {\mathcal M}[[T]]$.
 We prove that if $\mathcal P$  is a finitely determined property
 then  $MPS_{\mathcal P}\in  {\mathcal M}[T]_{loc}$.
 In particular we prove $MPS_{\MS}\in  {\mathcal M}[T]_{loc}$.


In the section 4 we compute the tangent space of $\MS$ at a closed
point, for this we determine  families that are first order
deformations. We apply these results to the moduli space of
singularities with maximal Hilbert  function, in particular to
plane curve singularities. By considering $\MS$ as object of the
category of pro-schemes we define  a topology  on its sheaf of rings. Taking dual spaces
with respect this topology we obtain the reflexivity of the
tangent space at the closed points of $\MS$ and also the
reflexivity of the normal space of the curve singularities
parameterized by $\MS$.
 We end the paper studying the
obstructiveness of the closed points of $\MS$. We prove that plane
curve singularities and space curve singularities with maximal
numbers of generators with respect to their multiplicity define
non-obstructed closed points.

\medskip
\noindent {\bf {\sc Acknowledgements:}} The author would like to
thank
O.A. Laudal for the comments and suggestions
that improved this paper.


\medskip
\section{Truncations of curve singularities.}
\label{chartrunc}

Throughout this paper ${\bf k}$  is an algebraically closed
field. We set $R={{\bf k}[[X_1,...,X_N]]}$, $M=(X_1,...,X_N)$ is
the maximal ideal of $R$, and we denote by $\EAN$ the ${\bf
k}-$scheme $Spec(R)$.

A curve singularity of $\EAN$ is a one-dimensional Cohen-Macaulay,
closed subscheme $C$ of $\EAN$. We denote by $\max$ the maximal
ideal of ${\mathcal O}_C=R/I$, and by $H^1_{C}$ (resp.
$h^1_{C}(T)=e_0(T+1)-e_1$) the first  Hilbert function (resp.
Hilbert polynomial) of $C$, i.e. $H^1_{C}(t):= length_R({\mathcal
O}_C/\max^{t+1})$ and $H^1_{C}(t)=h^1_{C}(t)$ for $t \ge e_0-1$;
$e_0$ is the multiplicity of $C$. An element $x\in {\mathcal O}_C$
is a degree one superficial element if $ (\max^{n+1}:x)=\max^{n} $
for all $n \gg 0$, see for instance \cite{Nor57}.

From now on we fix a degree-one polynomial $p(T)=e_0(T+1)-e_1$ for
which
 there exist a curve singularity $C \subset \EAN $
of embedding dimension $b \le N$ with $h^1_{C}=p$, see
\propref{compositio}.

 Given a curve singularity $C$ we
denote by $C_{n}$ the closed sub-scheme of $\EAN$
defined by the ideal $I(C)+M^{n}$, we  say that $C_{n}$ is the
$n-$th truncation of $C$, $n \ge 1$.
First we recall some necessary conditions for the
ideals $J \subset R$ to being a truncation of a curve singularity.

\medskip
\begin{lemma}\label{nec-cond}
  Let  $C \subset \EAN$ be a curve singularity of multiplicity
  $e_0$.
  There exists a linear form $L\in M\setminus M^2$ such that for all
  $n \ge e_0+1$ such that the following conditions hold:
\begin{enumerate}
  \item[(1)] $length_R(R/I(C)+M^n+(L)) =e_0$,

  \item[(2)] if $\maxn$ is  the maximal ideal of $R/I(C)+M^n$
    then for all $t$, $n-2 \ge t \ge e_0-1$, the product by $L$
        defines  an isomorphism of $ \res$-vector spaces of
        dimension $e_0$:
        $$
        \frac{\maxn^t}{\maxn^{t+1}} \stackrel{.L}{\longrightarrow}
        \frac{\maxn^{t+1}}{\maxn^{t+2}}
        $$
\end{enumerate}
\end{lemma}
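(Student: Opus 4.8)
The plan is to translate both conditions into intrinsic statements about the local ring $A=\mathcal O_C=R/I(C)$ and the image $\ell$ of $L$ in $A$, and then to choose $L$ general. Since $I(C)\subseteq M$, the maximal ideal of $A$ is $\max$ (the image of $M$), the image of $M^{n}$ in $A$ is $\max^{n}$, and $R/(I(C)+M^{n})\cong A/\max^{n}$ with maximal ideal $\maxn=\max/\max^{n}$; hence for $t+1\le n$ the natural surjection induces an isomorphism $\maxn^{t}/\maxn^{t+1}\cong\max^{t}/\max^{t+1}$, compatibly with multiplication by the class of $L$, and $R/(I(C)+M^{n}+(L))\cong A/(\max^{n}+\ell A)$. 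So it suffices to produce $\ell\in\max\setminus\max^{2}$ (and take $L$ to be any linear lift of it) such that (1$'$) $length_R(A/(\max^{n}+\ell A))=e_{0}$ for $n\ge e_{0}+1$, and (2$'$) for every $t\ge e_{0}-1$ multiplication by $\ell$ is an isomorphism $\max^{t}/\max^{t+1}\longrightarrow\max^{t+1}/\max^{t+2}$ of $\res$-vector spaces of dimension $e_{0}$. Since (1$'$) and (2$'$) involve only $A$ and $\ell$, a single $L$ will work for all $n\ge e_{0}+1$, as the lemma requires.

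Because $\res$ is infinite I would take $\ell$ to be a general element of $\max/\max^{2}$, so that $\ell$ is a degree-one superficial element of $A$ in the sense recalled above and $(\ell)$ is a minimal reduction of $\max$. As $C$ is Cohen--Macaulay of dimension one, the maximal ideal of $A$ is not an associated prime of $A$, so $\ell$ is a non-zero-divisor; consequently $length_R(A/\ell A)=e((\ell))=e(\max)=e_{0}$ (a parameter ideal generated by a non-zero-divisor in a one-dimensional ring has colength equal to its multiplicity, and a reduction has the same multiplicity as $\max$). This gives (1$'$): the $A$-module $A/\ell A$ has length $e_{0}$, hence its $\max$-adic Loewy length is $\le e_{0}$, i.e. $\max^{e_{0}}\subseteq\ell A$; therefore $A/(\max^{n}+\ell A)=A/\ell A$ has length $e_{0}$ for all $n\ge e_{0}$, a fortiori for $n\ge e_{0}+1$.

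For (2$'$) the decisive input is the sharp stabilization of the $\max$-adic behavior of $\ell$: for a general minimal reduction $(\ell)$ of $\max$ in a one-dimensional Cohen--Macaulay local ring one has $\max^{t+1}=\ell\,\max^{t}$ for all $t\ge e_{0}-1$, equivalently the reduction number of $\max$ with respect to $(\ell)$ is at most $e_{0}-1$. This is exactly the ``control of the degree-one superficial elements'' of \cite{Eli86b} --- the same fact that yields $H^{1}_{C}(t)=h^{1}_{C}(t)$ for $t\ge e_{0}-1$ recalled at the beginning of the section --- and I regard establishing it as the one genuinely nontrivial step; everything else is bookkeeping. Granting it, fix $t\ge e_{0}-1$: multiplication by $\ell$ carries $\max^{t}/\max^{t+1}$ onto $\max^{t+1}/\max^{t+2}$ because $\max^{t+1}=\ell\,\max^{t}$, and it is injective because if $a\in\max^{t}$ with $\ell a\in\max^{t+2}=\ell\,\max^{t+1}$ (using $t+1\ge e_{0}-1$ as well) then $\ell a=\ell b$ with $b\in\max^{t+1}$, whence $a=b\in\max^{t+1}$ as $\ell$ is a non-zero-divisor. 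So the map is an isomorphism; running over consecutive $t\ge e_{0}-1$ shows the spaces $\max^{t}/\max^{t+1}$, $t\ge e_{0}-1$, all have one common dimension, which must be the eventual value of the first difference of $H^{1}_{C}$, namely $e_{0}$ (since $h^{1}_{C}(T)=e_{0}(T+1)-e_{1}$). Finally (1) is (1$'$), and (2) follows from (2$'$) through the comparison isomorphisms of the first paragraph, legitimate because $n-2\ge t$ forces $t+1\le n-1$ and $t+2\le n$.
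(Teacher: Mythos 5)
Your proposal is correct and follows essentially the same route as the paper: choose $L$ so that its image is a general (superficial) element, get condition (1) from $length(A/\ell A)=e(\max)=e_0$ together with $\max^{e_0}\subseteq \ell A$, and get condition (2) from the stabilization $\max^{t+1}=\ell\,\max^{t}$ for $t\ge e_0-1$ combined with the comparison isomorphisms $\max^{t}/\max^{t+1}\cong\maxn^{t}/\maxn^{t+1}$. The only cosmetic difference is that the paper cites Kirby and \cite{Eli86b} directly for the bijectivity and the dimension count, whereas you rederive injectivity from the non-zero-divisor property and read off the dimension from the Hilbert polynomial; the key nontrivial input (reduction number of $\max$ at most $e_0-1$) is the same in both.
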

\begin{proof}
Since $\res $ is infinite and ${\mathcal O}_C$ is a Cohen-Macaulay
local ring we may assume that there exists a linear form $L \in
M\setminus M^2$ such that its coset defines a degree one superficial
element of ${\mathcal O}_C$, \cite{Mat77} Proposition 3.2. Then we
have $length_R(R/I(C)+(L)) =e_0$ and  $M^n \subset I(C)+(L)$ for all
$n \ge e_0$. From this we deduce the first equality.

From \cite{Kir75}, Theorem 2, we have
$$
dim_{\res}\left(\frac{\max^n}{\max^{n+1}}\right)=e_0
$$
for all $n\ge e_0-1$. Hence from \cite{Eli86b}, Proposition 1, we
deduce
$$
\frac{\max^t}{\max^{t+1}} \stackrel{.L}{\longrightarrow}
        \frac{\max^{t+1}}{\max^{t+2}}
$$
is an isomorphism of $ \res$-vector spaces of  dimension $e_0$ for
all  $n-2 \ge t \ge e_0-1$. Since
$$
\frac{\max^t}{\max^{t+1}} \cong \frac{\maxn^{t}}{\maxn^{t+1}}
$$
for all  $n-2 \ge t \ge e_0-1$, we get $(2)$.
\end{proof}

\medskip
 Next we define a set of ideals ${\mathbb T}_n$ containing
the $n$-th truncation of curve singularities of multiplicity $e_0$.
Since we want to consider in \remref{somesub} a scheme structure on
some subsets of ${\mathbb T}_n$,   we replace the identity of
\lemref{nec-cond} $(1)$ by an inequality that will define an open
condition on a suitable Grassmanian.

\medskip
\begin{definition}
Let $n \ge e_0+1$ be an integer, ${\mathbb T}_n$ is the set of
ideals $J \subset R$ such that $M^n \subset J$ and such that there
exists a linear form $L \in M \setminus M^2$
  such that
        \begin{enumerate}
        \item[(1)]
                $length_R(R/J+(L)) \le e_0$, and
        \item[(2)]
        if $\maxn$ is the maximal ideal of $R/J$ then the
        product  by $L$ is an isomorphism of $\res$-vector spaces
        $$
        \frac{\maxn^t}{\maxn^{t+1}} \stackrel{.L}{\longrightarrow}
        \frac{\maxn^{t+1}}{\maxn^{t+2}}
        $$
        of dimension $e_0$, for all $t=e_0-1,\dots , n-2$.
        \end{enumerate}
\end{definition}

\medskip
\noindent
 From
the condition $(2)$ it is easy to prove that  there exist a linear
polynomial $q_J(T)= e_0( T+1) - b$, $b \in \mathbb Z$, such that
$$ q_J(t)= length_R(R/ J + M^{t+1}) $$
 \noindent for all $t=e_0-2,\dots , n-1$.
From the characterization of Hilbert functions due to Macaulay,
see for instance \cite{Sta78}, we get
$$
q_J(t) \ge e_0(t+1) -  \binom{e_0}{2}.
$$

\noindent For all $n_1 \le n_2$ we denote by $a_{n_2,n_1}:
{\mathbb T}_{n_2} \longrightarrow {\mathbb T}_{n_1} $ the
projection map $a_{n_2,n_1}(J)=J+M^{n_1}$.

\bigskip
\noindent
 For all $f \in R$ we denote by $f^*\in S=\res[X_1,\dots, X_N]$ the initial form of
$f$. If $J$ is an ideal of $R$ we will denote by $J^*$ the
homogeneous ideal of $S$ generated by the initial forms of the
elements of $J$, we put $Gr(R/J)=S/J^*$ for the associated graded
ring to $R/J$. A set of elements of $J$ such that their initial
forms is a (minimal) set of generators of $J^*$ is known as a
(minimal) standard basis of $J$. We will denote by $S_n$, resp.
$J^*_n$, the degree $n$ component  of $S$,  resp. $J^*$.

\medskip
\begin{proposition}
\label{shape-std} Let $J$ be an element of \quad ${\mathbb T}_n$, $n
\ge e_0+2$, then  every  minimal homogeneous basis $F_1,\dots, F_s$
of $J^*$ satisfies  $$deg (F_i) \not\in  \{e_0+1,\dots, n-1\}$$
\noindent for all $i=1,\dots, s$.
\end{proposition}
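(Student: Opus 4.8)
The plan is to exploit condition (2) in the definition of $\mathbb{T}_n$, which says that multiplication by a suitable linear form $L$ is an isomorphism
$$\frac{\maxn^t}{\maxn^{t+1}} \stackrel{.L}{\longrightarrow} \frac{\maxn^{t+1}}{\maxn^{t+2}}$$
of $\res$-vector spaces of dimension $e_0$ for all $t=e_0-1,\dots,n-2$. Equivalently, on the associated graded ring $Gr(R/J)=S/J^*$, multiplication by $L^*$ (the initial form of $L$, a nonzero linear form in $S$) induces an isomorphism between the degree $t$ and degree $t+1$ graded pieces for $e_0-1\le t\le n-2$. First I would translate the statement "every minimal homogeneous generator of $J^*$ has degree $\notin\{e_0+1,\dots,n-1\}$" into the Hilbert-function statement: a minimal generator of degree $d$ exists precisely when $(S/J^*)_d$ is strictly larger than $L^*\cdot(S/J^*)_{d-1}$, i.e. when multiplication by $L^*$ from degree $d-1$ to degree $d$ fails to be surjective (this uses that $L^*$ is a general linear form, so a Nakayama/superficiality argument identifies the minimal number of generators in degree $d$ with $\dim_\res (S/(J^*,L^*))_d$, which vanishes exactly when $L^*$ is surjective in that degree).

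The key steps, in order, are: (i) record that $(J^*+M^\cdot)$ has Hilbert function $q_J(t)=e_0(t+1)-b$ for $t=e_0-2,\dots,n-1$, as already established in the excerpt just before the proposition; so the graded pieces $(S/J^*)_t$ have dimension $e_0$ for $e_0-1\le t\le n-1$ and the successive difference is $e_0$ in that range; (ii) since $\dim(S/J^*)_t=e_0=\dim(S/J^*)_{t+1}$ and $.L^*$ is injective (by condition (2), for $t$ up to $n-2$), it is automatically an isomorphism in those degrees, hence surjective; therefore $(S/(J^*,L^*))_d=0$ for $d=e_0,\dots,n-1$; (iii) conclude by the superficiality argument that $J^*$ has no minimal generator in any degree $d$ with $e_0+1\le d\le n-1$: a minimal generator in degree $d$ would contribute a nonzero class in $(S/(J^*,L^*))_d$, contradicting (ii) for $d\le n-1$; and the bound $d\ge e_0+1$ on the forbidden range (rather than $d\ge e_0$) comes from the fact that the isomorphism in (ii) starts at $t=e_0-1$, so degree $e_0$ is the first degree in which surjectivity of $.L^*$ is forced, killing only generators of degree $\ge e_0+1$.

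The main obstacle I expect is step (iii): making rigorous the identification of "number of minimal generators of $J^*$ in degree $d$" with "$\dim_\res(S/(J^*,L^*))_d$". For a general linear form this is the standard fact that a general hyperplane section does not increase the minimal number of generators and that the graded Betti numbers in the relevant range can be read off the Artinian reduction — but here $L^*$ is not fully general, only chosen so that condition (2) holds, so I would need to check that condition (2) already supplies exactly the superficiality/injectivity needed for the Nakayama-type argument in the degree range $e_0-1\le t\le n-2$, and that nothing is claimed outside that range (indeed the proposition says nothing about degrees $\ge n$). A secondary point to be careful about: relating the maximal ideal $\maxn$ of $R/J$ and its associated graded to $S/J^*$ correctly, so that condition (2) for $\maxn$ really does give the isomorphism $.L^*\colon(S/J^*)_t\to(S/J^*)_{t+1}$; this is the standard isomorphism $\maxn^t/\maxn^{t+1}\cong(S/J^*)_t$ together with compatibility of the $L$-action, already implicit in the excerpt.
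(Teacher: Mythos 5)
Your overall strategy---exploit condition $(2)$ of the definition of ${\mathbb T}_n$, pass to the graded pieces of $S/J^*$, and use surjectivity and injectivity of multiplication by the linear form to rule out generators in the middle degrees---is the same as the paper's. But the pivot of your argument, the identification (set up before step (i) and invoked in step (iii)) of ``number of minimal generators of $J^*$ in degree $d$'' with $\dim_{\res}\bigl(S/(J^*,L)\bigr)_d$, equivalently with failure of surjectivity of $\cdot L\colon (S/J^*)_{d-1}\to (S/J^*)_d$, is false, and the direction you actually use---``a minimal generator in degree $d$ would contribute a nonzero class in $(S/(J^*,L))_d$''---fails even when $L$ is a nonzerodivisor on $S/J^*$: a minimal generator of $J^*$ in degree $d$ lies in $J^*$ and hence maps to zero in $S/(J^*,L)$. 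Concretely, take $J^*=(X_1^2)\subset S=\res[X_1,X_2]$ and $L=X_2$: then $(S/(J^*,L))_2=(\res[X_1]/(X_1^2))_2=0$ and $\cdot L$ is even bijective from $(S/J^*)_1$ to $(S/J^*)_2$, yet $X_1^2$ is a minimal generator of degree $2$. So bijectivity of $\cdot L$ at the single step $d-1\to d$ does not kill generators in degree $d$. Your own closing remark (``surjectivity in degree $e_0$ kills only generators of degree $\ge e_0+1$'') silently uses the correct off-by-one bookkeeping, which contradicts the displayed claim; that internal inconsistency is the symptom of the gap you yourself flagged as the main obstacle.

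What actually closes the argument---and is exactly what the paper does---is to combine surjectivity one degree lower with injectivity at the current degree. Surjectivity of $\cdot L\colon (S/J^*)_{t-2}\to(S/J^*)_{t-1}$ gives $S_{t-1}=LS_{t-2}+J^*_{t-1}$, hence $S_t=S_1S_{t-1}=LS_{t-1}+S_1J^*_{t-1}$, so any $a\in J^*_t$ can be written as $a=L\alpha+b$ with $b\in S_1J^*_{t-1}\subset J^*_t$; then $L\alpha=a-b\in J^*_t$, and injectivity of $\cdot L\colon (S/J^*)_{t-1}\to (S/J^*)_t$ forces $\alpha\in J^*_{t-1}$, whence $J^*_t=S_1J^*_{t-1}$ and there is no minimal generator in degree $t$. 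Both hypotheses are available precisely for $t=e_0+1,\dots,n-1$, which is where the stated range comes from. Your dimension counts in steps (i) and (ii) are fine as far as they go; it is step (iii) that must be replaced by this direct computation (or by a careful $\mathrm{Tor}$ long-exact-sequence argument doing the same accounting), since the standard Betti-number transfer to the Artinian reduction does not take the form you invoke.
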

\begin{proof}
Let $\maxn$ be the maximal ideal of $R/J$.
 $L$ is a linear form,
satisfying the conditions $(1)$ and $(2)$ of the definition of
${\mathbb T}_n$, so  that $$
\frac{S_t}{J^*_t}=\frac{\maxn^t}{\maxn^{t+1}}
\stackrel{.L}{\longrightarrow}
\frac{S_{t+1}}{J^*_{t+1}}=\frac{\maxn^{t+1}}{\maxn^{t+2}} $$
\noindent is an isomorphism of $\res$-vector spaces for all
$t=e_0-1,\dots , n-2$. From the surjectivity  of these morphisms we
get $S_t=L S_{t-1} + J_t^*$ for $t=e_0,\cdots n-1$. Hence we deduce
$$
J_t^*\subset S_1  J_{t-1}^* + L S_{t-1}
$$
$t=e_0+1,\cdots n-1$.
Let $a$ be an element of $J_t^*$, then there exist
$b\in S_1  J_{t-1}^* \subset J_t^*$ and $\alpha \in S_{t-1}$ such that
$a=b + L \alpha.$ In particular $L \alpha = a - b \in J_{t}^*$.
From the injectivity of the above morphisms we get $\alpha \in J_{t-1}^*$,
so $a\in S_1 J_{t-1}^*$.
Hence $J_t^*\subset S_1 J_{t-1}^*$  and then
$$J_t^*= S_1 J_{t-1}^*$$
\noindent
 for $t=e_0+1,\cdots , n-1$.
From this we get the claim.
\end{proof}

\medskip
\begin{definition}
Let $J \in {\mathbb T}_n$ be an ideal, $n \ge e_0+2$, and let
$F_1,\dots, F_s$ be a minimal homogeneous basis of $J^*$. We may
assume that $deg(F_i) \le e_0$ for $i=1,\dots , v$ and $deg(F_i)
\ge n$ for $i=v+1,\dots , s$. We denote by $\tilde J$ the
homogeneous ideal of $S$ generated by $F_i$, $i=1,\dots, v$.
\end{definition}


\medskip
 Next we will recall a result of G. Hermann, \cite{Her26},
quoted by M. Artin in \cite{Art69b}, Theorem 6.5. We need some
additional definitions. The degree $deg({\mathbf f})$ of a $r$-pla
of polynomials ${\mathbf f}=(f_1,\dots , f_r) \in S^r$ is by
definition the sum of the degrees of $f_1,\dots , f_r$. The degree
of ${\mathbf F}=\{{\mathbf f}_1,\dots , {\mathbf f}_s\}$,
${\mathbf f}_i \in S^r$, is the sum of the degrees of ${\mathbf
f}_1,\dots , {\mathbf f}_s$. Let $B \subset S^r$ be a
$S$-sub-module, the degree $deg(B)$ of  $B$ is the minimum of the
degrees of its systems of generators.

\medskip
\begin{proposition}[\cite{Her26}, \cite{Art69b}]
\label{hermann} There exists an integer valued function $\gamma :
{\mathbb N}^2 \longrightarrow {\mathbb N}$ such that for all ideals
$K \subset S$ of degree $\le d$ there exists a primary decomposition
of  $K= K_1 \cap \dots \cap K_r$ such that the following integers
are bounded by $\gamma (N,d)$:
\begin{enumerate}
  \item[(1)] The number $r$, and the degree of each primary ideal $K_i$.

  \item[(2)] The degree of the associated prime ideal $p_i =rad(K_i)$
  and the exponents $m_i$ such that $p_i^{m_i} \subset K_i$,
  $i=1,\cdots,r$.
\end{enumerate}
\end{proposition}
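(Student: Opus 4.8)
The plan is to reduce the entire statement to the classical \emph{effective linear algebra} over $S=\res[X_1,\dots,X_N]$ that underlies Hermann's original argument. The core lemma is a degree bound for the membership problem: there is a function $\beta(N,d)$ such that if $f_1,\dots,f_k\in S$ have degrees $\le d$ and $g\in(f_1,\dots,f_k)$ has degree $\le d$, then $g=\sum_i h_i f_i$ for some $h_i$ with $\deg(h_i f_i)\le\beta(N,d)$; more generally the syzygy module of $(f_1,\dots,f_k)$ is generated in degrees $\le\beta(N,d)$. I would prove this by induction on $N$: after a generic linear change of coordinates one may assume each $f_i$ is monic in $X_N$ of degree $\deg f_i$; eliminating $X_N$ by successive pseudo‑divisions reduces a membership/syzygy question over $\res[X_1,\dots,X_N]$ to finitely many such questions over $\res[X_1,\dots,X_{N-1}]$ together with one honest linear system with polynomial coefficients, whose solvability and degree bounds are controlled by Cramer's rule applied to a matrix of controlled degree. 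Tracking degrees through the recursion produces $\beta$ (doubly exponential, which is irrelevant here since we only need \emph{some} function of $(N,d)$).

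Granting $\beta$, the standard ideal‑theoretic constructions inherit degree bounds by routine arguments. Generators of $K_1\cap K_2$ of degree $\le\gamma_1(N,d)$ are read off from the syzygies of the concatenated generating set of $K_1$ and $K_2$. The elimination ideal $K\cap\res[X_{j+1},\dots,X_N]$ has generators of degree $\le\gamma_3(N,d)$, by eliminating one variable at a time with pseudo‑division (again an induction on $N$ controlled by $\beta$). The saturation $K:h^{\infty}$ by a single polynomial $h$ equals the contraction to $S$ of $\bigl(K+(th-1)\bigr)S[t]$, hence inherits the elimination bound; saturation by an arbitrary ideal reduces to this case using a generic linear combination of its generators, legitimate since $\res$ is infinite. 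I would make $\gamma_1,\gamma_3$ explicit in terms of $\beta$.

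With these tools I would run the classical primary‑decomposition algorithm and show it terminates in at most $N+2$ recursion steps, each raising all relevant degrees by a bounded amount. Concretely: after a generic linear change of coordinates, $K$ is in Noether position, so $A=\res[X_{e+1},\dots,X_N]\hookrightarrow S/K$ is finite, $e=\dim(S/K)$; the intersection of the $e$‑dimensional primary components of $K$ is recovered as a saturation $K:h^{\infty}$ for a suitable $h\in A$ of bounded degree, and its primary decomposition comes from the zero‑dimensional situation over $\mathrm{Frac}(A)$, where the number of components, the multiplicities, the degrees of the (maximal) associated primes, and the nilpotency exponents are all bounded by $\mathrm{rank}_A(S/K)$, which is $\le\gamma_4(N,d)$ by the elimination estimates. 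The residual ideal (the remainder of $K$ after removing the top‑dimensional part) is another saturation, has strictly smaller dimension, and has generators of degree bounded by the previous $\gamma$'s; one recurses. Since the dimension strictly drops, the recursion depth is $\le N+2$, so composing the per‑step bound finitely many times yields a single $\gamma(N,d)$ bounding $r$, the degrees of the $K_i$, the degrees of the $p_i=\mathrm{rad}(K_i)$, and the exponents $m_i$ — which is the assertion. The main obstacle is uniform degree control through the two non‑linear steps: choosing the generic Noether normalization so that monicity and finiteness hold with integral‑dependence relations bounded solely in terms of $(N,d)$, and bounding $\deg\mathrm{rad}(K_i)$ together with the exponent $m_i$ in the zero‑dimensional fibre over $\mathrm{Frac}(A)$; once a single algorithm step is shown to send an ideal of degree $\le\delta$ to data of degree $\le\gamma_0(N,\delta)$ for a fixed $\gamma_0$ built from $\beta,\gamma_1,\gamma_3,\gamma_4$, the bounded recursion depth finishes the proof.
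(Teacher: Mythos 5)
The paper gives no proof of this proposition: it is stated as a citation of Hermann \cite{Her26} and of Artin's Theorem~6.5 in \cite{Art69b}, and your outline is a faithful reconstruction of exactly the classical argument those sources use --- effective syzygy/membership bounds by induction on the number of variables, degree control for intersection, elimination and saturation, and then the dimension-descending primary-decomposition recursion through a Noether normalization. Your sketch is correct in structure and identifies the genuinely delicate points (uniform bounds through the generic coordinate change and in the zero-dimensional fibre over $\mathrm{Frac}(A)$), so it matches the intended proof; nothing further is needed beyond filling in the degree bookkeeping already present in the cited references.
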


\medskip
We will apply the last proposition to our setting.

\begin{definition}
Let $K$ be a height $N-1$ homogeneous ideal of $S$. Given a minimal
primary decomposition $K=K_1 \cap \dots \cap K_r$ under the
conditions $(1)$ and $(2)$ of \propref{hermann}, we can split this
decomposition, after a suitable permutation, in two pieces
$K_{emb}=K_{w+1} \cap \dots \cap K_r$, such that $rad(K_{emb})=M$
and $K_{no-emb}=K_1 \cap \dots \cap K_w$ is a perfect height $N-1$
ideal of $S$.
\end{definition}

\medskip
\begin{proposition}
\label{jotatilde} There exists a function $\delta :{\mathbb N}^2
\longrightarrow {\mathbb N}$ such that: let $J$ be an ideal of
$\;{\mathbb T}_n$, $n \ge \delta (N,e_0)$, then the following
conditions hold:
\begin{enumerate}
  \item[(1)] $S/\tilde J$ is a one-dimensional graded ring of
  multiplicity $e_0$, $deg(\tilde J) \le \delta(N,e_0)$,
                and ${\tilde J} + M^n = J^*$.

  \item[(2)] If $J$ is the ideal defining the
          $n$-th truncation of a curve singularity $C$ of multiplicity $e_0$
          then $S/\tilde J$ is the associated graded ring to ${\mathcal O}_C$,
          i.e. $\tilde J = I(C)^*$.
  \item[(3)]   If $Syz_1(\tilde J)$ is the first syzygy
  module of $F_1,\dots , F_v$ then it holds
  $deg(Syz_1(\tilde J)) \le  \delta(N,e_0).$
\end{enumerate}
\end{proposition}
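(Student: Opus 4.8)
The plan is to build the function $\delta$ by combining the degree bounds coming from Hermann's Proposition~\ref{hermann} with the structural information on $J^*$ obtained in Proposition~\ref{shape-std}. First I would fix $J \in {\mathbb T}_n$ and recall from Proposition~\ref{shape-std} that a minimal homogeneous basis $F_1,\dots,F_s$ of $J^*$ has no generator in degrees $e_0+1,\dots,n-1$; hence $J^* = {\tilde J} + M^n$ by construction of ${\tilde J}$, where ${\tilde J}$ is generated by the $F_i$ of degree $\le e_0$. The dimension count built into condition $(2)$ of the definition of ${\mathbb T}_n$ forces $\dim_{\res}(S_t/J^*_t) = e_0$ for $t = e_0-1,\dots,n-1$, so $S/{\tilde J}$ agrees with $S/J^*$ in those degrees; taking $n$ large (bigger than whatever degree bound the primary decomposition of ${\tilde J}$ will produce) this stabilized Hilbert function shows $S/{\tilde J}$ has a Hilbert polynomial equal to $e_0 T - (\text{const})$, i.e. dimension one and multiplicity $e_0$. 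The subtlety is that I cannot a priori bound $\deg({\tilde J})$ in terms of $N$ and $e_0$ directly, since $J^*$ itself may have high-degree generators (in degree $\ge n$); but those are precisely the generators I have discarded, and ${\tilde J}$ is generated in degrees $\le e_0$ by at most $\binom{N-1+e_0}{e_0}$ elements, which already gives an a priori bound on $\deg({\tilde J})$ depending only on $N$ and $e_0$. This handles $(1)$.

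For $(2)$, suppose $J = I(C) + M^n$ for a curve singularity $C$ of multiplicity $e_0$. Then $J^* \supset I(C)^*$, and since $S/I(C)^* = Gr({\mathcal O}_C)$ also has $\dim_{\res}$ of its degree-$t$ piece equal to $e_0$ for $t \ge e_0 - 1$ by \cite{Kir75}, Theorem~2, and since $I(C)^*$ is generated in degrees $\le$ some bound (again apply Proposition~\ref{shape-std} applied to $I(C)+M^n$: the standard basis of $J^*$ in low degrees comes from $I(C)$), the two ideals ${\tilde J}$ and $I(C)^*$ agree in all degrees once $n$ exceeds that bound. The clean way to see the equality ${\tilde J} = I(C)^*$ is: ${\tilde J} \subset J^* $ in every degree, ${\tilde J}$ is generated in degrees $\le e_0$, and in those low degrees $J^*_t = I(C)^*_t$ because any $f \in I(C) + M^n$ with $\mathrm{ord}(f) \le e_0 < n$ has its initial form equal to the initial form of its $I(C)$-component. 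So ${\tilde J}_t = I(C)^*_t$ for $t \le e_0$, and since both are generated in those degrees (for $I(C)^*$ use that its generators lie below $\delta(N,e_0)$ by the same shape argument, absorbing that into the definition of $\delta$), they coincide.

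For $(3)$, I would invoke the effective version of Hermann's method: a bound on the degrees of a system of generators of the syzygy module of a tuple of polynomials of bounded degree in a fixed number of variables. Concretely, $F_1,\dots,F_v$ have degrees $\le e_0$ and $v \le \binom{N-1+e_0}{e_0}$, and Hermann's algorithm (the same source \cite{Her26} cited for Proposition~\ref{hermann}, or the module version in \cite{Art69b}) gives $\deg(Syz_1)$ bounded by a function of $N$ and the degrees of the $F_i$, hence by a function of $N$ and $e_0$ alone. I would then define $\delta(N,e_0)$ to be the maximum of: the generator-degree bound for ${\tilde J}$, the Hermann primary-decomposition bound $\gamma(N,\deg {\tilde J})$ applied to the worst-case ${\tilde J}$, and the syzygy-degree bound — plus one, so that the hypothesis $n \ge \delta(N,e_0)$ is strong enough for all three conclusions simultaneously. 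The main obstacle I anticipate is $(1)$: making genuinely rigorous the claim that $\deg({\tilde J})$ is bounded independently of $J$ and $n$, since the definition of ${\tilde J}$ refers to a minimal standard basis of $J^*$ whose high-degree part is uncontrolled — the resolution is exactly Proposition~\ref{shape-std}, which guarantees the "gap" in degrees $e_0+1,\dots,n-1$ and thereby confines all relevant generators of ${\tilde J}$ to degrees $\le e_0$, where the dimension of $S_t$ gives the a priori bound.
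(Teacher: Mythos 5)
Your proposal is correct and follows essentially the same route as the paper: the degree gap of \propref{shape-std} gives ${\tilde J}+M^n=J^*$ together with the a priori bound on $\deg({\tilde J})$ coming from generation in degrees $\le e_0$, Hermann's bounds (\propref{hermann}) are then what guarantees that for $n\ge\delta(N,e_0)$ the constant window of the Hilbert function really forces $\dim S/{\tilde J}=1$ and multiplicity $e_0$, and an effective syzygy-degree bound yields $(3)$ (the paper cites \cite{Sjo92} where you cite Hermann). The only real divergence is in $(2)$, where the paper simply cites \cite{Eli86b}, Proposition 2, for the fact that $I(C)^*$ is generated in degrees $\le e_0$, while you rederive this from \propref{shape-std}; both are fine.
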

\begin{proof}
\noindent $(1)$ from the definition of $\tilde J$ we get ${\tilde
J} + M^n = J^*$. Let us assume that $S/\tilde J$ is a zero
dimensional ring. Since $\tilde J$ is generated by homogeneous
forms of degree $ \le e_0$ we have
$$ deg(\tilde J) \le e_0
\binom{N+e_0}{N}=d ,$$
 \noindent so from \propref{hermann} $(2)$
we get that
$$ length_S(S/\tilde J) \le
\binom{N-1+\gamma(N,d)}{N-1}=\eta(N,e_0). $$

We define $\delta(N,e_0)=\eta(N,e_0)+\binom{e_0}{2} +1$.
 Since $length_S(S/\tilde J) \ge e_0(n) - \binom{e_0}{2} $ then for
 $n \ge   \delta(N,e_0)$ we get a
contradiction, so $dim(S/\tilde J)\ge 1$.

 Let
$\maxn$ be the maximal ideal of $S/\tilde J$. From the definition of
$\tilde J$ and condition $(2)$ of the definition of ${\mathbb T}_n$
we get that $\maxn^{t+1}=L \maxn^t$ for all $t\ge e_0-1$, so
$dim(S/\tilde J) \le 1$. Since $dim(S/\tilde J)\ge 1$ we have that
$S/\tilde J$ is a one-dimensional graded ring of multiplicity less
or equal than $e_0$.
 Let
${\tilde J}= K_{no-emb} \cap K_{emb}$ be a primary decomposition
of $\tilde J$ satisfying  \propref{hermann}, where $K_{no-emb}$ is a perfect height $N-1$ ideal
and $K_{emb}$ is a $M-$primary ideal.
Since $\delta(N,e_0) \ge \gamma(N,d)$ from \propref{hermann} we
have $M^n \subset K_{emb}$ and then
$$ H^1_{S/\tilde J}(n)=
H^1_{S/K_{no-emb}}(n) + length_S\left(\frac{K_{no-emb}+K_{emb}}{K_{emb}}\right).
$$ \noindent From this we deduce that $H^0_{S/\tilde J}(n)=
H^0_{S/K_{no-emb}}(n)$ for all $n \ge \delta(N,e_0)$.
 Since $S/K_{no-emb}$ is a one-dimensional
Cohen-Macaulay graded ring of multiplicity
$e_0(S/K_{no-emb})=e_0(S/\tilde J) \le e_0$ and $H^0_{S/\tilde
J}(e_0-1)=e_0$, second condition of the definition of ${\mathbb
T}_n$, we get that $S/\tilde J$ has multiplicity $e_0$.

\noindent
 $(2)$ see \cite{Eli86b}, Proposition 2;  $(3)$  follows from \cite{Sjo92}.
\end{proof}

\medskip
We denote by $\beta_{Artin} : \mathbb N ^4 \longrightarrow \mathbb
N$ the so-called beta function of Artin, see \cite{Art69b},
Theorem 6.1.

\medskip
\begin{proposition}
\label{betabeta} There exists a numerical function $\beta: \mathbb
N^3 \longrightarrow \mathbb N$ such that for all ideals $J$ of $\;
{\mathbb T}_n$, $n \ge \delta (N,e_0)$,
$$\beta(N,e_0,n) \ge
\beta_{Artin}(N, v(r+1), 2 r, n)$$

\noindent
with $v$ the minimal number of generators of $\tilde J$, and $r$ the minimal number of
generators of $Syz_1(\tilde J)$.
\end{proposition}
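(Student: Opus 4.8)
The plan is to show that, although the numbers $v$ and $r$ attached to an ideal $J$ vary with $J$, they admit a bound depending only on $N$ and $e_0$; once this is done, $\beta$ is produced simply as a maximum of $\beta_{Artin}$ over the resulting finite range of its middle two arguments.

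First I would bound $v$. By \propref{jotatilde}$(1)$ the ring $S/\tilde J$ is one-dimensional of multiplicity $e_0>0$, so $\tilde J$ is a proper ideal of $S$ and no element of a generating set of $\tilde J$ can be a unit; hence every generator has total degree $\ge 1$, and therefore $v \le \deg(\tilde J)$ in the sense of degree used in \propref{hermann}. Again by \propref{jotatilde}$(1)$ we have $\deg(\tilde J) \le \delta(N,e_0)$, so $v \le \delta(N,e_0)$. Next I would bound $r$ in the same spirit. A syzygy $(h_1,\dots,h_v)$ of $F_1,\dots,F_v$ whose entries are all constants is necessarily zero: grouping $\sum_j h_j F_j = 0$ by the degrees of the $F_j$ would otherwise produce a nontrivial $\res$-linear relation among those $F_j$ of a fixed degree, contradicting the fact that $F_1,\dots,F_v$ is a minimal homogeneous generating set of $\tilde J$. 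Consequently every nonzero element of $Syz_1(\tilde J)$ has degree $\ge 1$, so $r \le \deg(Syz_1(\tilde J)) \le \delta(N,e_0)$ by \propref{jotatilde}$(3)$.

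Writing $d = \delta(N,e_0)$, we have thus shown that for every $J \in {\mathbb T}_n$ with $n \ge \delta(N,e_0)$ the inequalities $v(r+1) \le d(d+1)$ and $2r \le 2d$ hold, with $d$ depending only on $N$ and $e_0$. I would then set
$$
\beta(N,e_0,n)\ :=\ \max\bigl\{\, \beta_{Artin}(N,a,b,n)\ :\ 0 \le a \le d(d+1),\ \ 0 \le b \le 2d \,\bigr\},
$$
which is a maximum over a finite subset of ${\mathbb N}$ and hence a well-defined natural number; by construction it satisfies $\beta(N,e_0,n) \ge \beta_{Artin}(N,v(r+1),2r,n)$ for every such $J$. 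There is no genuine obstacle here: the one point worth stating with care is the translation of the degree bounds of \propref{jotatilde} into bounds on the numbers of generators $v$ and $r$ — that is, the observation that minimal generators of $\tilde J$, and nonzero syzygies among them, have positive degree — and everything else is bookkeeping.
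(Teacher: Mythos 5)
Your proposal is correct and follows essentially the same route as the paper: the paper's proof likewise reduces everything to the bounds $v\le \deg(\tilde J)\le \delta(N,e_0)$ and $r\le \deg(Syz_1(\tilde J))\le \delta(N,e_0)$ from \propref{jotatilde} and then "deduces the claim," which is exactly your finite maximum of $\beta_{Artin}$ over the bounded range of arguments. The only difference is that you spell out why minimal generators and nonzero syzygies have positive degree, a point the paper leaves implicit.
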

\begin{proof}
From the last result we know that $v\le deg(\tilde J)\le \delta(N, e_0)$
and $r\le deg(Syz_1(\tilde J)) \le  \delta(N,e_0)$.
From these inequalities we deduce the claim.
\end{proof}

\medskip
\begin{theorem}
\label{big-lifting} For all $n \ge \delta(N,e_0)$ the set of the
associated ideals to $n$-th truncations of curve singularities of
multiplicity $e_0$ coincides with
 $${\mathbb T}^{'}_n=a_{\beta(N,e_0,n),n}({\mathbb T}_{\beta(N,e_0,n)}).$$
\noindent Moreover, given $J \in {\mathbb T}^{'}_n$ there exists a
curve singularity $C$ of multiplicity $e_0$ such that
${\mathcal O}_{C_n}=R/J$,
and $S/\tilde J$ is the associated graded ring to ${\mathcal O}_C$.
\end{theorem}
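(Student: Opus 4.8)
The plan is to prove the two inclusions defining the equality of sets, and then to extract the additional structural statement along the way. Write $\beta=\beta(N,e_0,n)$ and $\delta=\delta(N,e_0)$ for brevity.

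\emph{The easy inclusion.} Suppose $J$ is the ideal defining the $n$-th truncation $C_n$ of a curve singularity $C$ of multiplicity $e_0$. By \lemref{nec-cond} there is a linear form $L$ witnessing conditions $(1)$ and $(2)$ of the definition of ${\mathbb T}_m$ for every $m\ge e_0+1$, in particular $I(C)+M^{\beta}\in{\mathbb T}_{\beta}$; since $\beta\ge n\ge\delta$, applying $a_{\beta,n}$ gives back $J=I(C)+M^n$. Hence the set of ideals of $n$-th truncations is contained in ${\mathbb T}'_n$.

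\emph{The hard inclusion.} Conversely, fix $J\in{\mathbb T}'_n$, so $J=\widehat J+M^n$ for some $\widehat J\in{\mathbb T}_{\beta}$. Form $\tilde J\subset S$ as in the definition, and take a minimal homogeneous standard basis $F_1,\dots,F_v$ of $\tilde J$ together with minimal generators $G_1,\dots,G_r$ of $Syz_1(\tilde J)$. By \propref{jotatilde} (applicable since $\beta\ge\delta$), $S/\tilde J$ is a one-dimensional graded ring of multiplicity $e_0$ with $\deg(\tilde J),\deg(Syz_1(\tilde J))\le\delta$, and $\tilde J+M^{\beta}=\widehat J^{\,*}$. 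The goal is to lift the $F_i$ to power series $f_1,\dots,f_v\in R$ whose $f_i^{\,*}=F_i$ and such that $I:=(f_1,\dots,f_v)$ defines a curve singularity with $I^{\,*}=\tilde J$ and $I+M^n=J$. By Robbiano–Valla's characterization of standard bases, $\{f_i\}$ is a standard basis of $I$ precisely when the homogeneous syzygies $G_1,\dots,G_r$ of $\{F_i\}$ lift to syzygies of $\{f_i\}$ in $R$; i.e. one needs a system of equations, in the unknown coefficients of the $f_i$ and of the lifted syzygies, that is solvable modulo $M^{\beta}$ (which it is, taking $f_i$ to be any lift of $F_i$ compatible with $\widehat J$) and whose bound on the relevant degrees is controlled by $v$ and $r$. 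This is exactly where Artin's approximation theorem and the function $\beta_{Artin}$ enter: since $\widehat J\in{\mathbb T}_{\beta}$ provides an approximate solution modulo $M^{\beta}\supset M^{\,\beta_{Artin}(N,v(r+1),2r,n)}$ by \propref{betabeta}, Artin's theorem produces an honest solution, i.e. genuine $f_i\in R$ and genuine syzygies agreeing with the approximate ones modulo $M^n$. The resulting ideal $I=(f_1,\dots,f_v)$ then has $I^{\,*}=\tilde J$, hence $R/I$ is one-dimensional Cohen–Macaulay of multiplicity $e_0$ by \propref{jotatilde}$(1)$, i.e. $C=\mathrm{Spec}(R/I)$ is a curve singularity; and $I+M^n=\widehat J+M^n=J$ because the solution agrees with the approximate one modulo $M^n$ and $\widehat J^{\,*}=\tilde J+M^{\beta}$. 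Finally $S/\tilde J=S/I^{\,*}$ is the associated graded ring of ${\mathcal O}_C$ by construction, which is the last assertion.

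\emph{Main obstacle.} The delicate point is the bookkeeping in the Artin-approximation step: one must write down the syzygy-lifting conditions as a finite system of polynomial equations whose complexity (number of variables, degrees, number of equations) depends only on $N,e_0,n$ through $v$ and $r$ — this is guaranteed by the uniform bounds of \propref{jotatilde}$(3)$ and Hermann's bounds in \propref{hermann} — so that a single $\beta=\beta(N,e_0,n)\ge\beta_{Artin}(N,v(r+1),2r,n)$ suffices simultaneously for all $J\in{\mathbb T}'_n$, and so that the approximate solution furnished by membership in ${\mathbb T}_{\beta}$ has the right truncation modulo $M^n$. Checking that Robbiano–Valla guarantees $I^{\,*}=\tilde J$ (and not merely $I^{\,*}\supseteq\tilde J$) from the lifted syzygies, and that conditions $(1)$–$(2)$ of ${\mathbb T}_n$ are preserved under the lift, completes the argument.
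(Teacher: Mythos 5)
Your outline is essentially the paper's proof: the easy inclusion via \lemref{nec-cond}, and the hard inclusion by encoding Robbiano--Valla's syzygy criterion for standard bases as a polynomial system $(Syz)$, feeding it the approximate solution supplied by $\widehat J\in{\mathbb T}_{\beta}$, and invoking Artin approximation with the bound of \propref{betabeta}. (The paper makes the approximate solution explicit: it lifts the minimal syzygies of $F_1,\dots,F_v$ to syzygies of a standard basis of $\widehat J$ via the map $\Phi$ of \cite{RV80}, then projects onto the first $v$ components, using \propref{shape-std} to see that the discarded generators lie in $M^{\beta}$, so the projected relations hold modulo $M^{\beta}$. Your phrase ``any lift of $F_i$ compatible with $\widehat J$'' gestures at this but the approximate syzygy coefficients, not just the $f_i$, are unknowns of the system and must be produced.)

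There is one genuine gap at the end: you conclude that $R/I$ is ``one-dimensional Cohen--Macaulay of multiplicity $e_0$ by \propref{jotatilde}$(1)$.'' That proposition only asserts that $S/\tilde J$ is a one-dimensional graded ring of multiplicity $e_0$; it says nothing about Cohen--Macaulayness, and $S/\tilde J$ may well have an embedded component at the irrelevant maximal ideal, so $I^{*}=\tilde J$ by itself only gives that $R/I$ is a one-dimensional local ring of multiplicity $e_0$. Since a curve singularity is by definition Cohen--Macaulay, this is precisely the part of the conclusion that still needs an argument, and it does not follow from the lifting alone. The paper closes it using condition $(1)$ of the definition of ${\mathbb T}_{\beta}$: there is a linear form $L$ with $\dim_{\res}(R/I+(L))\le e_0$, and by \propref{CM}$(1)$ (the inequality $\dim_{\res}(A/LA)\ge e_0$ for a one-dimensional local ring of multiplicity $e_0$, with equality if and only if $A$ is Cohen--Macaulay and $L$ is superficial) this forces $R/I$ to be Cohen--Macaulay. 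Add that step; the rest of your argument, including the uniformity of the bounds you flag as the main obstacle, matches the paper's treatment.
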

\begin{proof}
Let us consider an ideal $J=(f_1,\dots, f_s)$ of ${\mathbb T}_{\overline n}$,
$\overline n =\beta(N,e_0,n)$, such that $F_i=f_i^{*}$, $i=1,\cdots, s$,
form a minimal basis of $J^*$. We may assume that $deg(F_i) \le e_0$ for $i=1,\dots , v$ and $deg(F_i)
\ge \overline n$ for $i=v+1,\dots , s$, \propref{fibration}.
 We denote by $\tilde J$ the
homogeneous ideal of $S$ generated by $F_i$, $i=1,\dots, v$.

Let $Z$ be the first syzygy module of
$f_1, \dots, f_s$, and let $Z^*$ be the first syzygy module of
$F_1=f_1^*,\dots , F_s=f_s^*$. Let us recall that there exists a
map $\Phi: Z \longrightarrow Z^*$, see proof of \cite{RV80}
Theorem 1.9, such that for all $(a_1,\dots,a_s) \in Z$ we have
$\Phi(a_1,\dots,a_s)=(b_1,\dots,b_s)$ with $b_i$ the initial  form
of $a_i$ if $deg(a_i)=p-d_i$, with $d_i=deg(F_i)$,
$p=Min\{deg(a_i)+d_1, i=1,\dots,s\}$, and zero otherwise.

 Let  $R^1,\dots ,   R^r$ be
a minimal system of generators of syzygy module of $F_1,\dots , F_v$.
 From
\cite{RV80}, Theorem 1.9, there exist elements $\hat R^1,\dots ,
\hat R^r$ of $Z$ such that $\Phi(\hat R^i)= R^i$, $i=1,\dots,r.$
Let $\tilde R^i$ the projection on the first $v$ components of
$\hat R^i$, $i=1,\dots,r$. Then we have, $i=1,\dots,r$,
 $$ \sum_{j=1}^{v} \tilde
R^i_j f_j =0  \quad \text{mod }(X_1,\dots,X_N)^{\overline n}.$$

Let us consider the following system of equations attached to these
syzygy conditions $$(Syz): \left\{
\begin{array}{ll}
     \sum_{j=1}^{v} X^i_j Y_j =0     &  \\
     i=1,\dots,r                            &
  \end{array}
\right. $$ \noindent considered in the polynomial ring
$\res[X_1,\dots,X_N; Y_1,\dots,Y_v,X_1^1,\dots,X^r_v]$.

Since $\overline n =\beta(N,e_0,n)\ge \beta_{Artin}(N, v(r+1), 2 r, n)$,
from the approximation theorem of Artin, \cite{Art69b} Theorem 6.1, there
exists a solution of the system of equations $(Syz)$ $$
\sum_{j=1}^{v} {\overline R}^i_j g_j =0$$ \noindent $i=1,\dots,r$,
and such that $$ \left\{
\begin{array}{ll}
     g_j=f_j  \text{ mod } M^n & i=1,\dots,v\\
     {\overline R}^i= \tilde R^i   \text{ mod } M^n& i=1,\dots,r
  \end{array}
\right. $$

\noindent Let us define $I=(g_1,\dots,g_v)R$. Next step is to
prove that $g_1,\dots,g_v$ is a standard basis of $I$. We will
prove it by means of \cite{RV80} Theorem 1.9.
 Notice that $I=J\text{ mod } (X_1,\dots,X_N)^n$, $g_i^*=f_i^*=F_i$, $i=1,\dots,v$.
 Hence $R^1,\dots ,R^r$
is also a minimal system of generators of the first syzygy module of
$\{g_1^*,\dots,g_v^*\}$.
Since $\Phi({\overline R}^i)=\Phi(\tilde R^i)=R^i$,
$i=1,\dots,r$, and $\overline R^1, \dots, \overline R^r$ verifies
$(Syz)$,  from \cite{RV80} Theorem 1.9, we get that
$\{g_1,\dots,g_v\}$ is a standard basis of $I$, i.e.
$I^*=(g_1^*,\dots,g_v^*)=(F_1,\dots,F_v)=\tilde J$.
In particular $R/I$ is a one-dimensional local ring of multiplicity $e_0$,
\propref{jotatilde}  $(1)$.

From the condition $(1)$ of the
definition of ${\mathbb T}_{\overline n}$ we get that there exists a linear
form $L$ such that $dim_{\res}(R/I+(L)) = e_0$, so $R/I$
is a one-dimensional Cohen-Macaulay local ring of multiplicity $e_0$. If we define
$C=Spec(R/I)$ then we deduce the claim.
\end{proof}

\medskip
\section{Families of embedded curve singularities.}

For all ${\bf k}$-scheme  of finite type $S=Spec(A)$ we will
denote by ${({\bf k}^N,0)}_{S}$ the affine ${\bf k}$-scheme
$Spec(A[[X_1,...,X_N]])$. We denote  by $\pi : {({\bf k}^N,0)}_{S}
\longrightarrow S$ the morphism of ${\bf k}$-schemes induced by
the natural morphism of ${\bf k}$-algebras $A \longrightarrow
A[[\underline X]]=A[[X_1,...,X_N]].$ Given a closed subscheme $Z
\subset {({\bf k}^N,0)}_{S}$ we will denote by  $Z _{n}$ the
closed sub-scheme of ${({\bf k}^N,0)}_{S}$ defined by the ideal
$I(Z_{n})=I(Z)+(\underline X)^{n}$, for all $n \ge 1$.

\medskip
Let us denote by $\MF:{\bf{Aff}} \longrightarrow {\bf{Set}}$ the
contravariant functor such that for all affine ${\bf k}$-scheme of
finite type $S$

$$ \MF (S) = \left\{
\parbox{11.2cm}{
\vskip1mm \quad closed subschemes $Z \subset {({\bf k}^N,0)}_{S}$
such that $\pi : Z \longrightarrow S$

\noindent \quad is flat and

\vskip2mm
\begin{enumerate}
\item[(i)]
 for all $n \ge e_0+1$  the morphism $\pi_n :  Z_{n}
\longrightarrow S$ is flat with fibers of length $p(n-1)$,
\item[(ii)]
for all closed points $s \in S$ the fiber $Z_s= Z\otimes
_{S}\res(s)$ is a curve singularity of ${({\bf k}^N,0)}$ with
Hilbert polynomial $p$.
\end{enumerate}
\vskip1mm } \right\} $$

\medspace \noindent we say that  $\underline{\bf{H}}_{N,p}(S)$ is
the set of families of curves over $S$ with Hilbert polynomial $p$.

\medskip
\begin{proposition}
\label{critfam}
\begin{enumerate}
\item[(1)] $\MF (Spec({\bf k}))$ is the set of curve singularities of
           ${({\bf k}^N,0)}$ with Hilbert polynomial $p$.
\item[(2)]
Given a scheme $Z \subset {({\bf k}^N,0)}_{S}$ the following
conditions hold

\noindent (2.1) if $Z$  verifies the condition $(i)$ of the
definition of family of curve singularities over $S$ then
 $Z$  is  flat over $S$,

\noindent (2.2) if $S$ is reduced and if for all closed points $s
\in S$ the fiber $Z_{s}$ is a curve singularity with Hilbert
polynomial $p$ then $Z$  is a family of curves over $S$.
\end{enumerate}
\end{proposition}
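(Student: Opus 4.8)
The plan is to reduce each assertion to a standard flatness criterion plus the characterization of truncations already available. For part (1), specializing the functor $\MF$ to $S=\mathrm{Spec}(\res)$ collapses the flatness conditions (everything is flat over a field) and leaves only condition (ii): $Z$ is a curve singularity of $\EAN$ with Hilbert polynomial $p$. So the content of (1) is purely bookkeeping, and I would simply observe that condition (i) is automatically satisfied because over a point the length of $Z_n$ equals $\mathrm{length}_R(R/I(Z)+M^n)=H^1_C(n-1)=p(n-1)$ for $n\ge e_0+1$ by the very definition of the Hilbert polynomial; hence $\MF(\mathrm{Spec}(\res))$ is exactly the set of curve singularities with Hilbert polynomial $p$.

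For (2.1), the point is that flatness of $Z$ over $S$ can be detected after completing along the ideal $(\underline X)$, and $\mathcal{O}_Z$ with its $(\underline X)$-adic filtration has associated graded pieces $(\underline X)^n \mathcal{O}_Z/(\underline X)^{n+1}\mathcal{O}_Z$ which fit into exact sequences relating them to the $\mathcal{O}_{Z_n}$. Concretely, I would write $\mathcal{O}_Z = \plim \mathcal{O}_{Z_n}$ and use that each $\mathcal{O}_{Z_n}$ is a finitely generated $A$-module, flat over $A$ by condition (i) for $n\ge e_0+1$, while for $n\le e_0$ the sheaf $Z_n$ is a truncation of something already controlled; then invoke the local criterion for flatness (e.g. flatness of an inverse limit of flat modules with surjective transition maps whose kernels are flat, or directly the criterion via $\mathrm{Tor}_1^A(\mathcal{O}_Z,\res(s))=0$ for all closed points $s$). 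Since the transition maps $\mathcal{O}_{Z_{n+1}}\to\mathcal{O}_{Z_n}$ are surjective with kernel $(\underline X)^n\mathcal{O}_{Z_{n+1}}$, and these graded pieces are locally free of constant rank $e_0$ over $A$ once $n\ge e_0-1$ (this is where condition (i) on the fiber lengths is used — constancy of the Hilbert function forces the graded pieces to be flat), an induction gives flatness of each $\mathcal{O}_{Z_n}$ compatibly, and passing to the limit yields flatness of $\mathcal{O}_Z$.

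For (2.2), with $S$ reduced and the fibers $Z_s$ curve singularities with Hilbert polynomial $p$ at every closed point, I would first check condition (i): the function $s\mapsto \mathrm{length}_{\res(s)}(\mathcal{O}_{Z_n}\otimes_S\res(s))$ equals $H^1_{C_s}(n-1)$, which is the constant $p(n-1)$ for $n\ge e_0+1$ since all fibers have the same Hilbert polynomial; a coherent sheaf on a reduced scheme with locally constant fiber dimension is locally free, so $\pi_n$ is flat with fibers of length $p(n-1)$, giving (i). Then (2.1) gives flatness of $Z$ over $S$, and condition (ii) holds by hypothesis, so $Z\in\MF(S)$. The main obstacle is the flatness argument in (2.1): one must be careful that the inverse limit $\mathcal{O}_Z=\plim\mathcal{O}_{Z_n}$ behaves well — in particular that it is $A$-flat and that its formation commutes with base change to fibers — which is exactly the situation where one needs the constancy of the graded Hilbert function (supplied by condition (i)) rather than merely flatness of each individual truncation; without the uniform bound $p(n-1)$ the graded pieces could jump and destroy flatness of the limit. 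I expect the cleanest route is to establish that $\mathrm{gr}_{(\underline X)}\mathcal{O}_Z$ is a flat $A$-module (degree by degree, using (i)) and then lift flatness from the associated graded to $\mathcal{O}_Z$ itself via the $(\underline X)$-adic completeness.
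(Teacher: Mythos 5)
Your overall structure matches the paper's. Part (1) is indeed bookkeeping plus the fact that the Hilbert function of a one\-/dimensional Cohen--Macaulay local ring agrees with its Hilbert polynomial already from degree $e_0-1$ on (the paper cites Kirby's theorem for this; it is not quite ``by the very definition'' of the Hilbert polynomial, which only gives agreement for $t\gg 0$, but the conclusion is the same since condition (i) only involves $n\ge e_0+1$). Part (2.2) is exactly the argument you give: constancy of the fibrewise lengths of the coherent sheaves ${\mathcal O}_{Z_n}$ over a reduced base forces local freeness, hence condition (i), and then (2.1) together with the hypothesis on the fibres gives $Z\in\MF(S)$.

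The place you should tighten is the crux of (2.1), which you correctly identify but do not actually resolve. The tools you invoke do not apply as stated: an inverse limit of flat modules need not be flat, and the local criterion of flatness (Tor-vanishing against $A/I$, lifting flatness from the associated graded) concerns an ideal $I$ of the base ring $A$, whereas here the filtration is by the ideal $(\underline X)$ of $B=A[[\underline X]]/J$, not of $A$. Moreover, re-deriving flatness of the truncations from flatness of the graded pieces is redundant: condition (i) already hypothesizes that each $B_n=A[[\underline X]]/(J+(\underline X)^n)$ is $A$-flat, and the specific value $p(n-1)$ of the fibre lengths plays no role in (2.1). The paper's mechanism, following Matsumura, is to test flatness of $B$ against an arbitrary monomorphism $L\to P$ of finitely generated $A$-modules: each $L\otimes_A B_n\to P\otimes_A B_n$ is injective by flatness of $B_n$; inverse limits are left exact; and for finitely generated $L$ over the Noetherian ring $A$ one has $\plim\left(L\otimes_A B_n\right)\cong L\otimes_A B$, because $L\otimes_A B$ is a finite module over the $(\underline X)$-adically complete Noetherian ring $B$. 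That last isomorphism is precisely the step your sketch leaves open, and it is where the finiteness of $L$ and the completeness of $B$ enter; once you insert it, your argument closes.
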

\begin{proof}
(1) From the definition of family of curve singularities it is
easy to see that  $\MF (Spec({\bf k}))$ is a set of curve
singularities with Hilbert polynomial $p(T)$. Let $C$ be a curve
singularity of ${({\bf k}^N,0)}$ with Hilbert polynomial $p(T)$.
From \cite{Kir75}, Theorem 2, we have $h^1_{C}(n)=H^1_{C}(n)$ for
$n \ge e_0-1$, so we have $C \in \MF (Spec({\bf k}))$.

\noindent (2.1) follows the main ideas of the proof of
\cite{MatCA}, Theorem 55. Let $Z=Spec(A[[\underline X]]/J)$ be a
closed subscheme of ${({\bf k}^N,0)}_{S}$, with $S=Spec(A)$, such
that
 for all $n \ge e_0+1$  the morphism $\pi_n:  Z_{n}
\longrightarrow S$ is flat with fibers of length $p(n-1)$. We have
to prove that the morphism
$$
A \longrightarrow B:= \frac{A[[\underline X]]}{J}
$$
is flat. Let $f: L \longrightarrow P$ be a monomorphism of finitely
generated $A$-modules, we have to prove that $f\otimes
Id_B:L\otimes_A B \longrightarrow P\otimes_A B$ is also a
monomorphism.

Since the morphism $A \longrightarrow B_n:= \frac{A[[\underline
X]]}{J+(\underline X)^n}$ is flat for all $n\ge e+1$, we have that
$$
f\otimes Id_{ B_n}:L\otimes_A  B_n \longrightarrow P\otimes_A B_n$$
 is also a monomorphism, $n\ge e_0 +1$.
 Hence
$$
\plim f\otimes Id_{ B_n}: \plim \left( L\otimes_A B_n\right)
\longrightarrow \plim \left( P\otimes_A B_n\right)$$
 is  a monomorphism.
Since the $A$-modules $L$ and $P$ are finitely generated
it is well known that
$$\plim \left( L\otimes_A  B_n\right) \cong L\otimes_A B, \;\;\;
\plim \left( P\otimes_A  B_n \right)\cong P\otimes_A B$$ from this
we deduce $(2.1)$.  The statement (2.2) follows from \cite{Kir75},
Theorem 2.
\end{proof}

\medspace
\begin{remark}
Notice  that the condition $(i)$ implies that for all closed
points $s \in S$ the fiber $Z_{s}$ is a $1-$dimensional closed
sub-scheme of ${({\bf k}^N,0)}$ with Hilbert polynomial $p$. Hence
$(ii)$ can be changed to

\medskip
\noindent (ii)' for all closed points $s \in S$ the fiber $Z_{s}$
is a Cohen-Macaulay scheme.
\end{remark}

\medspace
\begin{remark}
There exist flat morphisms that are not families. See
\exref{flat-nfam} for a first order deformation of a plane curve
singularity  that is not a family.
\end{remark}

\medskip
\noindent In the following examples we will construct families of
curve singularities with a closed  fiber $C$ by deforming the ideal
$I(C)$, by deforming a first syzygy matrix of $I(C)$ and by
deforming a parametrization of $C$.

\medspace
\begin{example}
Let $F, G_{1},..., G_{r}$ be power series in the variables $X_{1},
X_{2}$. We assume that $order(F)=e_0$ and $order(G_{i})\ge e_0+1$
for $i=1,...,r$. We put $A={\bf k}[T_{1},...,T_{r}]$, $S=Spec(A)$,
$I=(F+ \sum_{i=1}^{r}T_{i}G_{i}) \subset A[[X_{1},X_{2}]],$ and
$Z=Spec(A[[X_{1},X_{2}]]/I)$.
 Notice that for all point $s$ of
$S$ the fiber $Z_{s}$ is a plane singularity of multiplicity $e_0$.
From \propref{critfam} (2.2) we deduce that  $Z$ is a family of
plane curve singularities with Hilbert polynomial $p=e_0T-
e_0(e_0-1)/2$.
\end{example}

\medspace
\begin{example}
Let us consider the curve singularity of ${({\bf k}^3,0)}$ defined
by the ideal generated by the maximal minors of the matrix
(\cite{EN62}) $$ \left(
\begin{array}{ll}
X_{3}       & 0     \\
 X_{1}^{e_0-1} & X_{3} \\
  0 & X_{2} \\
\end{array}
\right) $$ \noindent An straightforward  computation shows that
$h^1_{C}=e_0T-(e_0^{2}-3e_0+4)/2$, and
$$H^1_{C}=\{1,3,4,5,6,...,e_0-1,e_0,e_0,...\}.$$

\noindent Let us consider the closed subscheme $Z$ of ${({\bf
k}^N,0)}_{S}$, $S=Spec({\bf k}[U])$, defined by the ideal
generated by the maximal minors of the matrix $$ \left(
\begin{array}{ll}
X_{3}+ Q_{1} U      & 0              \\
X_{1}^{e_0-1}+Q_{4} U & X_{3}+ Q_{2} U \\
0                   & X_{2}+ Q_{3} U \\
\end{array}
\right) $$ \noindent where $Q_{1}, Q_{2}, Q_{3}$ are formal power
series in $X_{1}, X_{2}, X_{3}$ of order at least $2$, and $Q_{4}$
is a power series in the same set of variables  of order at least
$e_0$. We know that $Z$ is a flat deformation of $C$ with base $S$,
see \cite{Art76}.
 We have a better result: from \cite{RV80},
Theorem 1.9, the tangent cone of the fiber $Z_{s}$ coincides with
$Z_{0}=C$ for all closed point $s \in S$. Hence the Hilbert
function is constant on the fibers, from \propref{critfam} we get
that $Z$ is a family of curve singularities with Hilbert
polynomial $p=e_0T-(e_0^{2}-3e_0+4)/2$.
\end{example}

\medspace
\begin{example}
Notice that in the previous example we have deformed the matrix of
syzygies of the curve singularity obtaining a family of curve
singularities. We can also deform the parametrization, i.e. the
normalization morphism, in order to obtain families. In this case
the families verify a stronger condition the singularity order of
the fibers is constant, see \cite{Tei77}. Let $C$ the curve
singularity of ${({\bf k}^4,0)}$ with normalization morphism $
{\mathcal O}_C  \longrightarrow {\overline {\mathcal O}_C} \cong
{\bf k}[[t]], $ \noindent defined by
$$X_{1}=t^{6},
X_{2}=t^{7},  X_{3}=t^{10}, X_{4}=t^{15}. $$

\medskip
\noindent Recall that we can compute the Hilbert function of $C$
by two different methods. First we can compute the ideal of $C$
eliminating $t$ and then computing the Hilbert of ${\mathcal
O}_C$. In our setting we can also compute the Hilbert function of
$C$ using the fact $$\delta (C) = \# ( {\mathbb N} \setminus \Gamma
(C))$$

\noindent where $\Gamma (C) =<6,7,10,15>$ is the semi-group
generated by $C$. Hence we have $\delta (C) = 8$. On the other
hand we can desingularize $C$ by an unique Blow-up, so from
\cite{Kir59} we get that $\delta (C) = \rho = 8$. Hence we have
$h^1_C= 6T-8$.
Let us consider the family of parameterizations
$$\beta (U): X_{1}=t^{6}, X_{2}=t^{7}, X_{3}=t^{10}, X_{4}=t^{15}+
U t^{16}.$$
 \noindent Then we can consider the closed sub-scheme of
${({\bf k}^4,0)}_{S}$, $S=Spec({\bf k}[U])$, defined by $\beta
(U)$. From \cite{Tei77} we get that for all closed points $s \in
S$ the normalization of the fiber $Z_{s}$ is defined by $\beta
(s)$. Hence $\Gamma (Z_{s}) = \Gamma (C)$ and
$h^1_{Z_{s}}=h^1_{C}=6T-8;$
from \propref{critfam} we get that $Z \in {{\bf
H}_{4,6T-8}} (S)$.
\end{example}

\medskip
We end this section studying the relationship between families of
curve singularities and normally flat morphisms, see
\cite{Hir64a}.

\medskip
\begin{definition}
Let $S=Spec(A)$  be a scheme of finite type and let $Z$ be a closed
subscheme of ${({\bf k}^N,0)}_{S}$ such that there exists a closed
section $\sigma: S \longrightarrow  Z$ of $\pi:Z \longrightarrow S$.
We say that $Z$ is normally flat along $S$ if and only if
$Gr_{{{\mathcal I}}_{\sigma(S)}}({\mathcal O}_Z)$ is a flat
${\mathcal O}_S$ module.
\end{definition}

\medskip
\begin{proposition}
Let $Z$ be a closed subscheme  of ${({\bf k}^N,0)}_{S}$ such that
there exists a closed section $\sigma: S \longrightarrow  Z$ of
$\pi:Z \longrightarrow S$.

\noindent (1) if $Z$ is a normally flat scheme along $S$ and
verifies $(ii)'$ then $Z$ is a family of curves,

\noindent (2) if  $Z$ is a family of plane curve singularities
then $Z$ is normally flat along $S$.
\end{proposition}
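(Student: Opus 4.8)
The plan is to prove the two assertions separately after a common reduction. Since each fibre ${\mathcal O}_{Z_s}=\kappa(s)[[\underline X]]/I(Z_s)$ is a local ring, the closed point of $Z_s$ is $V(\underline X)$, so the given section is fibrewise the closed point; after an $A$-automorphism of $A[[\underline X]]$ (writing $S=Spec(A)$, ${({\bf k}^N,0)}_{S}=Spec(A[[\underline X]])$) we may assume $\sigma(S)=V(\underline X)\cap Z$, which forces $I(Z)\subseteq(\underline X)$, ${\mathcal I}:={\mathcal I}_{\sigma(S)}=(\underline X){\mathcal O}_Z$, and ${\mathcal O}_Z/{\mathcal I}^n=A[[\underline X]]/(I(Z)+(\underline X)^n)={\mathcal O}_{Z_n}$ for all $n\ge 1$. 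In particular each ${\mathcal O}_{Z_n}$ is a quotient of the finite free $A$-module $A[[\underline X]]/(\underline X)^n$, hence finitely generated over the Noetherian ring $A$; moreover $Gr_{{\mathcal I}}({\mathcal O}_Z)=\bigoplus_{n\ge 0}{\mathcal I}^n/{\mathcal I}^{n+1}$ and ${\mathcal I}^n/{\mathcal I}^{n+1}=\ker({\mathcal O}_{Z_{n+1}}\to{\mathcal O}_{Z_n})$. I will use twice the elementary facts that an extension of flat $A$-modules is flat and that the kernel of a surjection of flat $A$-modules is flat (both from the long exact $\mathrm{Tor}$ sequence, since $\mathrm{Tor}_1$ and $\mathrm{Tor}_2$ of a flat module vanish).

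For $(1)$: normal flatness says $Gr_{{\mathcal I}}({\mathcal O}_Z)$ is $A$-flat, so each ${\mathcal I}^n/{\mathcal I}^{n+1}$ is $A$-flat. Using ${\mathcal O}_Z/{\mathcal I}\cong A$ (via $\sigma$) and the exact sequences $0\to{\mathcal I}^{n-1}/{\mathcal I}^n\to{\mathcal O}_{Z_n}\to{\mathcal O}_{Z_{n-1}}\to 0$, induction on $n$ together with ``extension of flats is flat'' shows that ${\mathcal O}_{Z_n}$ is $A$-flat for every $n\ge 1$; being finitely generated it is locally free, of rank $H^1_{Z_s}(n-1)$, a locally constant function of $s$. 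By $(ii)'$ the fibres $Z_s$ are Cohen--Macaulay; since the common Hilbert function of the fibres is $H^1$ with polynomial $p$ (this being the content of $(ii)$ that $(ii)'$ stands for, or: the local constancy just proved together with the fibre over one closed point being a curve singularity with Hilbert polynomial $p$), the morphisms $\pi_n:Z_n\to S$ are flat with fibres of length $p(n-1)$ for $n\ge e_0+1$, which is condition $(i)$. With $(ii)'$ and the Remark identifying $(ii)$ with $(ii)'$ in the presence of $(i)$, and with $Z$ itself flat over $S$ by \propref{critfam} $(2.1)$, we conclude $Z\in\MF(S)$.

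For $(2)$: here $N=2$, so ${\mathcal O}_Z=A[[X_1,X_2]]/(f)$ is a hypersurface, and $order(f_s)=e_0$ for every closed $s$ (the multiplicity of the plane curve $Z_s$). Axiom $(i)$ gives ${\mathcal O}_{Z_n}$ $A$-flat for $n\ge e_0+1$. For $1\le n\le e_0$, ${\mathcal O}_{Z_n}=A[[X_1,X_2]]/((X_1,X_2)^n+(f))$ has fibres $\kappa(s)[[X_1,X_2]]/(X_1,X_2)^n$ of length $\binom{n+1}{2}$ (using $order(f_s)=e_0\ge n$), and a short argument — deducing from the flatness of ${\mathcal O}_{Z_{e_0+1}}$ that $f$ has no homogeneous component of degree $<e_0$ (such a component necessarily having nilpotent coefficients) — shows ${\mathcal O}_{Z_n}=A[[X_1,X_2]]/(X_1,X_2)^n$ is $A$-free there. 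Hence ${\mathcal O}_{Z_n}$ is $A$-flat for every $n\ge 1$, so each ${\mathcal I}^n/{\mathcal I}^{n+1}=\ker({\mathcal O}_{Z_{n+1}}\to{\mathcal O}_{Z_n})$ is $A$-flat by ``kernel of a surjection of flats is flat'', and therefore $Gr_{{\mathcal I}}({\mathcal O}_Z)=\bigoplus_n{\mathcal I}^n/{\mathcal I}^{n+1}$ is $A$-flat, i.e.\ $Z$ is normally flat along $\sigma(S)$.

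The main obstacle is this last point in $(2)$: the definition of a family imposes nothing on ${\mathcal O}_{Z_n}$ for $n\le e_0$, and over a non-reduced base $f$ could a priori carry nilpotent-coefficient homogeneous parts of degree $<e_0$; showing that flatness of the truncations already in the range $n\ge e_0+1$ forces these to vanish — so that the $(X_1,X_2)$-adic filtration of ${\mathcal O}_Z$ behaves as in the reduced case, where such parts are literally $0$ — is the crux. In $(1)$ the only subtlety is that $(ii)'$ is genuinely needed to pass from ``flat truncations of locally constant fibre length'' to ``curve-singularity fibres with Hilbert polynomial $p$'': normal flatness alone does not detect the one-dimensionality of the fibres.
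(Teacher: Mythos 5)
Your proof is correct and follows essentially the same route as the paper: part $(1)$ rests on the equivalence between flatness of $Gr_{{\mathcal I}_{\sigma(S)}}({\mathcal O}_Z)$ over ${\mathcal O}_S$ and flatness of all the truncations ${\mathcal O}_{Z_n}={\mathcal O}_Z/{\mathcal I}_{\sigma(S)}^{n}$, and part $(2)$ reduces locally to a single defining power series $F$ of fibrewise order $e_0$ and then checks freeness of ${\mathcal O}_{Z_n}$ for every $n\ge 1$, including the range $n\le e_0$ not covered by condition $(i)$. The only difference is one of explicitness: the paper establishes $(F)+(X_1,X_2)^{e_0+1}=I(Z)A_{m}[[X_1,X_2]]+(X_1,X_2)^{e_0+1}$ by a rank count and leaves the rest as ``easy'', whereas you isolate the genuine crux --- that flatness of ${\mathcal O}_{Z_{e_0+1}}$ forces the (a priori nilpotent-coefficient) components of $F$ of degree $<e_0$ to vanish --- which is a fair reading of where the content of that ``easy'' step actually lies.
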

\begin{proof}
(1) The result follows from the fact $Gr_{{{\mathcal
I}}_{\sigma(S)}}({\mathcal O}_Z)$
 is a flat ${\mathcal O}_S$-module
if and only if ${\mathcal O}_Z / {{\mathcal I}}_{\sigma(S)}^{n} =
{\mathcal O}_{Z_{n}}$ is a flat ${\mathcal O}_S$-module for all $n
\ge 1$.

\noindent (2) Let $Z$ be a family of plane curve singularities
over  an affine scheme $S=Spec(A)$. We need to prove that for all
closed point $s \in S$ and $n \ge 1$ the ${\mathcal
O}_{S,s}$-module ${\mathcal O}_{Z_{n}}$ is free of rank
$e_0n-e_0(e_0-1)/2$.
Let $m$ be the maximal ideal of $A$ defined by $s$. For all $n \ge
1$, we will prove that
$$
\frac{A_{m}[[X_{1},X_{2}]]}{I(Z)A_{m}[[X_{1},X_{2}]]+(X_{1},X_{2})^{n}}
$$

\noindent is a free $A_{m}$-module of rank $e_0n-e_0(e_0-1)/2$.
Let $F \in I(Z)A_{m}[[X_{1},X_{2}]] \subset A_m[[X_{1},X_{2}]]$ be a power series such
that $A_m[[X_{1},X_{2}]]/(F) \otimes_{A} {\bf k}$ is the local ring
${\mathcal O}_{Z_{s}}$.
Since the $\res$-vector spaces
$
(F)+(X_{1},X_{2})^{e_0+1} \subset
I(Z)A_{m}[[X_{1},X_{2}]]+(X_{1},X_{2})^{e_0+1}
$
have the same codimension
$e_0(e_0+1)-e_0(e_0-1)/2$, the vector spaces agree.
 From this it is easy to prove
(2).
\end{proof}

\medspace
\begin{remark}
The last Proposition enable us to consider the condition $(i)$ of
the definition of family of curve singularities as a weak form  of
normally flat morphism. Notice that the last three examples are in
fact normally flat families.
\end{remark}

\medspace
\begin{example}
Example of family not normally flat.
Let us consider the family of monomial curves
$$X_1=t^7, X_2=t^8, X_3=(1-u) t^9 + a t^{10}.$$
Since the singularity order and the Hilbert polynomial
does not depend on the parameter
$u$, from \cite{Tei77} and \propref{critfam}, we get that there exists
a family of curve singularities $\pi : Z\longrightarrow
S=Spec(\res[u])$ such that $Z_u$ is the monomial curve defined by
$u$.
On the other hand $H^1_{Z_0}(3)=5$, and $H^1_{Z_1}(3)=6$ so
$\pi$ is not a normally flat family.
\end{example}

\medskip
In \cite{Eli90} we defined rigid Hilbert polynomials as the
polynomials that determines the Hilbert function; i.e.
$p=e_0T-e_1$ is rigid if there exists a function $H_{p}:{\mathbb
N} \longrightarrow {\mathbb N} $ such that if $C$ is a curve
singularity with $h^1_{C}=p$ then $H^1_{C}=H_{p}$. For instance
$p=e_0T-e_1$ with $e_1 = e_0-1$, $e_0$, $e_0(e_0-1)/2-1$,
$e_0(e_0-1)/2$ are rigid polynomials, and any Hilbert polynomial
$p=e_0T-e_1$ with $e_0\le 5$ is rigid, see \cite{Eli90}. See also
\cite{EV91} for further results on rigid polynomials. Finally, it
is easy to prove

\medskip
\begin{proposition}
Every family of curve singularities with a rigid Hilbert
polynomial over a reduced base is normally flat.
\end{proposition}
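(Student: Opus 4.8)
The plan is to reduce the statement, via the equivalence used in the proof of the preceding Proposition, to the flatness of every truncation ${\mathcal O}_{Z_n}$ over ${\mathcal O}_S$, and then to deduce that flatness from rigidity by a constant-fibre-length argument over the reduced base. So I would first fix $S=Spec(A)$ with $A$ reduced of finite type over $\res$, and $Z=Spec(A[[X_1,\dots,X_N]]/I(Z))$ a family of curve singularities with rigid Hilbert polynomial $p$ and associated Hilbert function $H_p$. One needs a section: the $\res$-algebra map $A[[X_1,\dots,X_N]]/I(Z)\to A$ sending each $X_i$ to $0$ is well defined exactly when $I(Z)\subseteq(X_1,\dots,X_N)A[[X_1,\dots,X_N]]$, and since $A$ is reduced (so $S$ is Jacobson) this may be checked at closed points, where it is clear because $I(Z_s)\subseteq(X_1,\dots,X_N)\res(s)[[X_1,\dots,X_N]]$. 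For the resulting ``origin'' section $\sigma$ one has ${\mathcal I}_{\sigma(S)}=(X_1,\dots,X_N){\mathcal O}_Z$, hence ${\mathcal O}_Z/{\mathcal I}_{\sigma(S)}^n={\mathcal O}_{Z_n}$, so it suffices to show that ${\mathcal O}_{Z_n}$ is ${\mathcal O}_S$-flat for all $n\ge 1$. For $n\ge e_0+1$ this already holds by clause $(i)$ of the definition of a family, so the real content is the range $1\le n\le e_0$, although I would argue uniformly in $n$.

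Next I would study the coherent ${\mathcal O}_S$-module ${\mathcal F}_n:=\pi_{n*}{\mathcal O}_{Z_n}$, which as an $A$-module is $A[[X_1,\dots,X_N]]/(I(Z)+(X_1,\dots,X_N)^n)$, a quotient of the finite free module $A[[X_1,\dots,X_N]]/(X_1,\dots,X_N)^n$ and hence finitely generated. Its formation commutes with base change along $A\to\res(s)$: for any $s\in S$, ${\mathcal F}_n\otimes_A\res(s)$ is the ring of the fibre of $\pi_n$ over $s$, namely $\res(s)[[X_1,\dots,X_N]]/(I(Z_s)+(X_1,\dots,X_N)^n)$, of $\res(s)$-dimension $H^1_{Z_s}(n-1)$. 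At a closed point $s$ the fibre $Z_s$ is a curve singularity with $h^1_{Z_s}=p$, so rigidity of $p$ forces $H^1_{Z_s}=H_p$, and therefore $\dim_{\res(s)}\!\big({\mathcal F}_n\otimes_A\res(s)\big)=H_p(n-1)$ for every closed point $s$.

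It remains to promote this constancy of the fibre rank to all of $S$ and conclude local freeness. Since $A$ is reduced, ${\mathcal F}_n$ is locally free on a dense open $U\subseteq S$; the closed points of $S$ being dense in each irreducible component ($S$ of finite type over $\res$), the rank of ${\mathcal F}_n$ along each component equals its value at those closed points, namely $H_p(n-1)$. Upper semicontinuity of the fibre rank then pins $\dim_{\res(s)}\!\big({\mathcal F}_n\otimes_A\res(s)\big)$ to $H_p(n-1)$ for every $s\in S$: it is $\ge$ the generic rank (generizing $s$ to the generic point of its component) and $\le H_p(n-1)$ (specializing to a closed point of $\overline{\{s\}}$, which exists because $S$ is Jacobson). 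A coherent sheaf with locally constant fibre rank over a reduced Noetherian scheme is locally free, hence flat; so every ${\mathcal O}_{Z_n}$ is ${\mathcal O}_S$-flat and $Z$ is normally flat along $S$. I expect the only genuinely non-formal step to be this last passage from constant fibre rank to local freeness, which uses the reducedness of $S$ in an essential way (it fails without it); the rest is the definitional reduction together with the observation that rigidity makes each truncation ${\mathcal O}_{Z_n}$ have constant fibre length.
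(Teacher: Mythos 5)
Your argument is correct, and it is essentially the proof the paper has in mind: the statement is given there without proof (``it is easy to prove''), but the proof of the immediately preceding proposition for plane curves follows exactly your reduction --- normal flatness is equivalent to flatness of every truncation ${\mathcal O}_{Z_{n}}$ over ${\mathcal O}_S$, and each ${\mathcal O}_{Z_{n}}$ is shown locally free by a constant-fibre-length count. Your additional care (constructing the origin section from reducedness of $A$, and promoting the rank computation from closed points to all points via semicontinuity and the Jacobson property before invoking ``constant fibre rank over a reduced Noetherian base implies locally free'') just fills in details the paper omits; the key mechanism, namely that rigidity forces $\dim_{\res(s)}\bigl({\mathcal O}_{Z_{n}}\otimes_{A}\res(s)\bigr)=H_{p}(n-1)$ for all $n$, is the intended one.
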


\medskip
\section{Moduli space of curve singularities.}
\label{somesub}

The purpose of this section is to construct a moduli scheme
$\MS$   parameterizing the embedded curve singularities of $\EAN$
with fixed Hilbert  polynomial $p$, \thmref{monster}.

Let us recall that we assumed in the first section that $p=e_0
(T+1)-e_1$ is an admissible Hilbert polynomial, i.e. there exists a
curve singularity $C$ of $\EAN$ with Hilbert polynomial $p$. In
\cite{Eli90} we characterized  the admissible  Hilbert polynomials.
To recall that result we have to define some integers attached to
$e_0$ and the embedding dimension: given integers $1\le b\le e_0$ we
consider the following integers
$\rho_{0,b,e_0}=(r+1)e_0-\binom{r+b}{r}$, with $r$ the integer such
that $\binom{b+r-1}{r} \le e_0 < \binom{b+r}{r+1}$, and
$\rho_{1,b,e_0}=e_0(e_0-1)/2-(b-1)(b-2)/2$.

\medskip
\begin{proposition}
\label{compositio} There exists a curve singularity $C$ with
embedding dimension $b\le N$ and Hilbert  polynomial $p(T)=e_0
T-e_1$ if, and only if, either

\noindent
$(1)$ $b=1$, $e_0=1$, $e_1=0$, or

\noindent
$(2)$ $2 \le b \le e_0$, and  $\rho_{0,b,e_0} \le e_1 \le \rho_{1,b,e_0}$.

\noindent Moreover, for each triplet $(b, e_0, e_1)$ satisfying
the above conditions there is a reduced curve singularity $C
\subset \EAN$ with  embedding dimension $b$ and Hilbert
polynomial $p=e_0 T-e_1$, with $\res$ is an  algebraically closed
field.
\end{proposition}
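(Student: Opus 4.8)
The plan is to recall the two halves of this characterization — necessity of the numerical conditions, and the construction of curves realising them — following \cite{Eli90} (see also \cite{Eli01}).

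\emph{Necessity.} Let $C\subset\EAN$ be a curve singularity of embedding dimension $b\le N$ with $h^1_C(T)=e_0(T+1)-e_1$. If $e_0=1$ then $\mathcal O_C$ is a one-dimensional Cohen--Macaulay local ring of multiplicity one, hence regular, so $b=1$, $H^1_C(t)=t+1$ and $e_1=0$, which is case $(1)$. If $e_0\ge2$ then $b\ge2$, and Abhyankar's inequality for one-dimensional Cohen--Macaulay local rings gives $b\le e_0$. For the bounds on $e_1$, put $h(t)=\dim_{\res}(\max^t/\max^{t+1})$, the Hilbert function of the standard graded ring $Gr(\mathcal O_C)=S/I(C)^*$: then $h(0)=1$, $h(1)=b$, and by \cite{Kir75} $h(t)=e_0$ for $t\ge e_0-1$, while summing $H^1_C(t)=\sum_{s=0}^t h(s)$ against the Hilbert polynomial gives the finite sum $e_1=\sum_{t\ge0}(e_0-h(t))$. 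One checks, using the structure theory of one-dimensional Cohen--Macaulay local rings, that $h(t)\le e_0$ for all $t$, and Macaulay's theorem (\cite{Sta78}) gives $h(t)\le\binom{b+t-1}{t}$; hence $h(t)\le\min\{\binom{b+t-1}{t},e_0\}$ and, by the hockey-stick identity $\sum_{s=0}^r\binom{b+s-1}{s}=\binom{b+r}{r}$ with $r$ chosen as in the statement, $e_1\ge(r+1)e_0-\binom{b+r}{r}=\rho_{0,b,e_0}$. Dually, once $h(1)=b$ is prescribed, Macaulay's theorem limits how fast $h$ may decrease while still returning to the value $e_0$, and a count on the extremal admissible sequence yields $e_1\le e_0(e_0-1)/2-(b-1)(b-2)/2=\rho_{1,b,e_0}$.

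\emph{Construction.} For the converse, given an admissible triple $(b,e_0,e_1)$ I would exhibit a reduced curve singularity with those invariants. The plan is first to produce an admissible profile: either directly via an explicit numerical semigroup, so that the monomial curve $\res[[t^{a_1},\dots,t^{a_b}]]$ (a domain, hence reduced) has $\min a_i=e_0$, the $a_i$ minimally generating $\langle a_1,\dots,a_b\rangle$, and the resulting Hilbert function giving the prescribed $e_1$; or, by realising a suitable one-dimensional standard graded $\res$-algebra of embedding dimension $b$ and multiplicity $e_0$ (possible by a lex-segment construction precisely because $\rho_{0,b,e_0}\le e_1\le\rho_{1,b,e_0}$) and lifting it to the associated graded ring of a curve singularity by the mechanism of \thmref{big-lifting}, where genericity of the lifting coefficients gives reducedness. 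Since $b\le N$, all these curves lie in $\EAN$.

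\emph{Main obstacle.} The delicate part is the sharpness of the upper bound together with the constructions for the \emph{intermediate} values $\rho_{0,b,e_0}<e_1<\rho_{1,b,e_0}$, which arise only when $2<b<e_0$: these force the associated graded ring of $C$ to be \emph{not} Cohen--Macaulay (the relevant $h$ is not the partial-sum sequence of the Hilbert function of any Artinian reduction), so the naive construction of a cone over an Artinian reduction does not produce them and one needs the finer semigroup or lifting constructions; symmetrically, ruling out $O$-sequences that would violate either $\rho$-bound — in particular controlling the possible behaviour of $h$ below and above $e_0$ — is exactly where Macaulay's growth and decay estimates must be combined carefully with the structure theory of one-dimensional Cohen--Macaulay local rings. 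For plane curve singularities ($b=2$) and curves of minimal multiplicity ($b=e_0$) the two bounds coincide, the Hilbert function is forced, and both implications are immediate.
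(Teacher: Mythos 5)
The paper contains no proof of this proposition: it is recalled verbatim from \cite{Eli90} (the integers $\rho_{0,b,e_0}$, $\rho_{1,b,e_0}$ are defined just above the statement precisely so that the earlier result can be quoted), so your proposal has to stand on its own rather than be measured against an in-paper argument. The necessity argument up to the lower bound does stand: the reduction of $e_0=1$ to case $(1)$, Abhyankar's inequality $b\le e_0$, Kirby's stabilization $h(t)=e_0$ for $t\ge e_0-1$, the identity $e_1=\sum_{t\ge 0}(e_0-h(t))$, and the estimate $h(t)\le\min\bigl\{\binom{b+t-1}{t},e_0\bigr\}$ summed via the hockey-stick identity correctly yield $e_1\ge\rho_{0,b,e_0}$.

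The upper bound is where the argument genuinely breaks. You derive $e_1\le\rho_{1,b,e_0}$ from ``Macaulay's theorem limits how fast $h$ may decrease while still returning to $e_0$,'' but Macaulay's growth bound together with $h(0)=1$, $h(1)=b$, $h(t)\le e_0$ and $h(t)=e_0$ for $t\ge e_0-1$ does \emph{not} imply that bound. Concretely, for $b=4$, $e_0=10$ the sequence $1,4,3,4,5,6,7,8,9,10,10,\dots$ satisfies every Macaulay inequality (e.g.\ $3^{\langle 2\rangle}=\binom{4}{3}=4$, $4^{\langle 3\rangle}=\binom{5}{4}=5$, and so on) and all of the constraints you list, yet gives $\sum_{t}(10-h(t))=43>42=\rho_{1,4,10}$. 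So the upper bound cannot be a purely combinatorial consequence of the graded data; one needs an input specific to one-dimensional Cohen--Macaulay \emph{local} rings (controlling, via a superficial element, exactly which dips of $h$ below $e_0$ are possible), and that input --- the heart of \cite{Eli90} --- is precisely what excludes profiles such as the one above. Your ``count on the extremal admissible sequence'' presupposes that you already know which sequences are admissible, which is the theorem. The construction half is likewise only announced: the route through \thmref{big-lifting} is circular, since to invoke it you must first exhibit an ideal of ${\mathbb T}_{\beta(N,e_0,n)}$ with the prescribed invariants --- essentially the same existence problem --- and nothing in that theorem makes the resulting curve reduced or ``generic''; while the monomial-curve route requires, for each admissible intermediate value of $e_1$, an explicit semigroup or other reduced example, which is the substantive content of the ``moreover'' clause and is not supplied.
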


\medskip
Let $F:{\mathbb  N} \longrightarrow {\mathbb N}$ be a numerical
function such that $F(t)\le b(t):=\binom{N+t-1}{N}$ for all $t\ge 0$.
For each $t \ge 0$ we denote by $G_{t}$ the Grassmannian
of $F(t)-$dimensional quotients of $R_t=R/M^{t}$, notice that
$R_t$ is a $b(t)$ dimensional $\res$-vector space.
Recall that
$G_{t}$ represents the contravariant functor
$\underline{G}_{t}:{\bf {Sch}} \longrightarrow  {\bf {Set}}$,
where ${\bf{Sch}}$ is the category of $\res-$schemes locally of
finite type, ${\bf{Set}}$ the category of sets, and
$\underline{G}_{t}(S)$ is the set of locally free quotients of
$\widetilde{R_{t}}_{(S)}$ of rank $F(t)$, see  \cite{EGA-I}-I-9.7.4.
 If $K$ is a
$F(t)-$dimensional quotient of $R_{t}$ then we will denote by
$[K]$ the corresponding closed point of $G_{t}$.

\medskip
We denote by $F_{r,n}$ the contravariant set-valued functor on
${\bf{Sch}}$ defined by: $F_{r,n}(S)$ is the set of $S-$ module
quotients ${\mathcal F}=\widetilde{R_{n}}_{(S)}/N$ such that the
${\mathcal O_S}\mbox{-module}$ $$ {\mathcal F}^{(i)}=\widetilde{
R_{n-i}}_{(S)}/ (\sigma _{n,i})_{*}(N) $$

\noindent belongs to $\underline{G}_{n-i}(S)$, $i=0,1,...,n-r$,
where $\sigma _{n,i}:\widetilde{ R_n} \longrightarrow \widetilde{
R_{n-i}}$ is the natural  morphism of sheaves. For all integers $r
\le n$, let $W(r,n,F)$ be  the  reduced subscheme of $G_{n}$ whose
closed points correspond to the $\res$-vector space quotients $R_n/E$ such
that $dim_{\res}(R_n/E+M^{t})=F(t)$ for all $t=r,\dots, n$.

\medskip
\begin{proposition}
\label{WWW-1} The scheme $W(r,n,F)$ represents the functor
$F_{r,n}.$
\end{proposition}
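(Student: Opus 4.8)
The plan is to show that $W(r,n,F)$, which is defined as a reduced subscheme of $G_n$ cut out by certain rank conditions, is in fact the representing scheme for $F_{r,n}$, by identifying $F_{r,n}$ with a locally closed subfunctor of $\underline{G}_n$ and checking that the corresponding locally closed subscheme of $G_n$ coincides set-theoretically and scheme-theoretically (after reduction) with $W(r,n,F)$. First I would recall that $\underline{G}_n$ is represented by $G_n$ via the universal quotient $\widetilde{R_n}_{(G_n)} \to \mathcal{Q}_n$, so giving an element of $F_{r,n}(S)$ amounts to giving a morphism $\varphi: S \to G_n$ together with the extra requirement that, pulling back the universal quotient, each of the further quotients $\mathcal{F}^{(i)} = \widetilde{R_{n-i}}_{(S)}/(\sigma_{n,i})_*(N)$ is locally free of rank $F(n-i)$ for $i=0,\dots,n-r$. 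The key observation is that this is a condition on the morphism $\varphi$ alone: the sheaves $(\sigma_{n,i})_*(\mathcal{Q}_n)$ on $G_n$ are coherent, and the locus where a coherent sheaf is locally free of a prescribed rank is locally closed (it is the intersection of the open locus where the rank is $\le F(n-i)$, given by vanishing of appropriate minors of a presentation, with the open locus where the rank is $\ge F(n-i)$, given by non-vanishing of other minors — equivalently, by semicontinuity of fiber dimension from both sides). Hence the functor $F_{r,n}$ is represented by the locally closed subscheme $W'$ of $G_n$ defined as the intersection over $i=0,\dots,n-r$ of these rank-$F(n-i)$ loci for $(\sigma_{n,i})_*(\mathcal{Q}_n)$.

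Next I would identify the closed points of $W'$. A closed point of $G_n$ is a quotient $R_n/E$ of dimension $F(n)$; it lies in $W'$ exactly when, for each $i=0,\dots,n-r$, the image $R_{n-i}/((\sigma_{n,i})_*E)$ has dimension exactly $F(n-i)$. But $(\sigma_{n,i})_*E$ is precisely $(E + M^{n-i})/M^{n-i}$ viewed inside $R_{n-i}=R/M^{n-i}$, so this dimension is $\dim_{\res}(R/(E+M^{n-i})) = \dim_{\res} R_n/(E+M^{n-i})$ for the relevant indices. Reindexing $t = n-i$, the condition is exactly $\dim_{\res}(R_n/E + M^t) = F(t)$ for all $t = r, \dots, n$ — which is verbatim the defining condition for the closed points of $W(r,n,F)$. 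So $W'$ and $W(r,n,F)$ have the same closed points, hence (both being schemes locally of finite type over the algebraically closed field $\res$) the same underlying topological space.

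Finally I would promote this to an isomorphism of schemes. Here the point is that $F_{r,n}$, being a functor whose value on $S$ consists of locally free quotients, is \emph{reduced-representable} in the sense that its representing object is a reduced scheme: indeed one checks that $F_{r,n}(S) = F_{r,n}(S_{\mathrm{red}})$ is not literally true, but the standard argument is that $W'$ with its reduced structure still represents $F_{r,n}$ because a morphism $S \to G_n$ factors through the locally closed set-theoretic locus $W'$ if and only if it factors through $W'$ with \emph{any} scheme structure supported there, once one knows that over that locus the relevant sheaves are already locally free — and local freeness of a coherent sheaf at a point is checked on the fiber, so it descends to the reduced locus. Thus $W'_{\mathrm{red}} = W(r,n,F)$ represents $F_{r,n}$, which is the claim. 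The main obstacle, and the step deserving the most care, is precisely this last reduction-of-scheme-structure issue: one must argue that imposing local freeness of the sheaves $\mathcal{F}^{(i)}$ forces the morphism to land in the reduced locally closed subscheme and not merely in some thickening of it; the cleanest route is to use that on the open locus where the rank is $\le F(n-i)$, the sheaf $(\sigma_{n,i})_*\mathcal{Q}_n$ is locally free of rank $F(n-i)$ precisely on the \emph{closed} subscheme cut out by a Fitting ideal that is already reduced there (or to invoke \cite{EGA-I}-I-9.7.4 directly for the flattening stratification), so that $W(r,n,F)$ with its reduced structure is exactly the flattening stratum and hence represents the functor.
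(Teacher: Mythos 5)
Your overall strategy --- realizing $F_{r,n}$ as the subfunctor of $\underline{G}_{n}$ cut out by the condition that the pullbacks of the coherent sheaves $\mathcal{G}^{(i)}=\mathrm{coker}\bigl(\mathcal{N}\to \widetilde{R_{n-i}}_{(G_n)}\bigr)$ (with $\mathcal{N}$ the universal subsheaf) be locally free of rank $F(n-i)$, and then invoking a flattening-stratification argument --- does produce a locally closed subscheme of $G_{n}$ representing $F_{r,n}$, and your identification of its closed points with those of $W(r,n,F)$ is correct. The genuine gap is the final step, where you must show that this representing subscheme carries the \emph{reduced} structure, because $W(r,n,F)$ is defined in the paper as a reduced subscheme. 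Flattening strata are in general non-reduced, and neither of the arguments you offer closes the gap: the claim that ``local freeness of a coherent sheaf at a point is checked on the fiber, so it descends to the reduced locus'' is false over a non-reduced base (over $Spec({\bf k}[\varepsilon])$ the module ${\bf k}[\varepsilon]/(\varepsilon)$ has a one-dimensional fiber at the unique point but is not locally free of rank one), and whether a morphism $S\to G_{n}$ factors through a locally closed locus genuinely depends on the scheme structure placed on that locus --- that dependence is the whole point of the flattening stratification. The assertion that the relevant Fitting ideal ``is already reduced there'' is exactly what must be proved, and you give no argument for it.

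The paper closes precisely this point by a more explicit route: it covers $G_{n}$ by the standard affine cells $B_{n}(H)$, identifies a quotient in $\underline{G}_{n,H}(S)$ with a retraction $v:\widetilde{R_{n}}_{(S)}\to {\mathcal O}_S^{F(n)}$ of $\varphi_{H}$, and proves (the CLAIM, via \cite{EGA-I}-0-5.5.7) that local freeness of each $\mathcal{F}^{(i)}$ is equivalent to the \emph{linear} equations $\pi(i)v\sigma(i)=0$. Hence each representing piece $X_{H}$ is a linear subspace of the affine space $B_{n}(H)$, in particular reduced, and gluing via \cite{EGA-I}-0-4.5.4 yields a reduced scheme which therefore coincides with $W(r,n,F)$. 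If you wish to keep the flattening-stratification framing you still need an argument of this kind (or some other proof of reducedness of the stratum) to conclude; as written, your proof is incomplete at its most delicate point.
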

\begin{proof}
In order to prove the result we will use \cite{EGA-I}-0-4.5.4.
From the local nature of the definition of $F_{r,n}$ it is easy to
verify the second condition of \cite{EGA-I}-0-4.5.4. The first
condition follows from \cite{EGA-I}-I-9.7.4.6, and the third
condition from \cite{EGA-I}-I-9.7.4.7.

We will prove the fourth condition of \cite{EGA-I}-0-4.5.4. Let
$\{m_{i}\}_{i=1,...,b(n)}$ be a lexicographically ordered set of
monomials of ${{\bf k}[X_1,...,X_N]}$ such that their cosets in
$R_n$ form a ${\bf  k}-$basis.
 We denote by ${\mathcal H}$ the set of
$H=\{i_{1},...,i_{F(n)}\} \subset \{1,2,...,b(n)\}$ such that
$$card(H \cap \{1,2,...,b(n-i)\})=F(n-i)$$

\noindent for all $i=1,2,...,n-r.$ It is known that for every $H \in
\mathcal{H}$ we get an open set $B_{n}(H)$ of $G_{n}$: $B_{n}(H)$ is
the set of $F(n)$ dimensional vector spaces $R_n/L \quad$ such that
the cosets of $m_{i_{1}},...,m_{i_{F(n)}}$ in this quotient form a
${\bf  k}$-basis. This open set is isomorphic to the affine space of
matrices, $F(n) \times (b(n)-F(n))$.

Let $\varphi _{H}:{\mathcal O}_{Spec({\bf k })}^{F(n)}
\longrightarrow \widetilde{R_n}$ be the morphism of ${\mathcal
O}_{Spec({\bf  k})}-$modules defined by the ${\bf  k}-$linear map
$\varphi _{H}:{\bf k}^{F(n)}\longrightarrow R_n$
 with
$\varphi _{H}((a_{i})_{i \in H})= \sum_{i \in H}a_{i}m_{i}. $
By \cite{EGA-I}-0-4.5.4 we get  that $B_{n}(H)$ is
represented by the subfunctor $\underline{G}_{n,H}$ of
$\underline{G}_{n}$ defined by: $\underline{G}_{n,H}(S)$ is the
set of ${\mathcal F} \in \underline{G}_{n}(S)$ such that the
composition
$$ {\mathcal O}_S^{F(n)} \stackrel{\varphi
_{H}}{\longrightarrow} \widetilde{R_{n}}{(S)} \longrightarrow {\mathcal
F} $$ \noindent is an epimorphism.
It is easy to see that there
exists a one-to-one correspondence $\gamma $  between
$\underline{G}_{n,H}(S)$ and the set of ${\mathcal O_S}-$morphisms
$v:\widetilde{R_n} \longrightarrow {\mathcal O_S}^{F(n)}$ such that
$v\varphi_{H}=Id$; $v$ corresponds to ${\mathcal F}=\widetilde{
R_n}/Ker(v)$. Hence we have a exact sequence of set maps $$
\underline{G}_{n,H}(S) \stackrel{\gamma}{\longrightarrow}
 Hom_{{\mathcal O_S}}(\widetilde{R_n},{\mathcal O_S}^{F(n)})
{{{\scriptstyle \alpha_H} \atop {\rightrightarrows}}\atop
{\scriptstyle \beta_H} }
 Hom_{{\mathcal O_S}}({\mathcal O_S}^{F(n)},{\mathcal O_S}^{F(n)}),
 $$
\noindent with $\alpha_{H} (v)=v\varphi_{H}$,  $\beta_{H} (v)=Id;$
we get that $\underline{G}_{n,H}$ is representable by $B_{n}(H)$,
i.e. the kernel of the pair of morphisms
 $$
{\bf k}^{b(n)F(n)} {{{\scriptstyle \alpha_H} \atop
{\rightrightarrows}}\atop {\scriptstyle \beta_H} }
 {\bf  k}^{F(n)^{2}}.
$$

Let us consider the subfunctor $F_{r,n,H}$ of $F_{r,n}$ such that
for every $S$, $F_{r,n,H}(S)$ is the set of ${\mathcal F} \in
F_{r,n}(S)$ such that the composition $$ {\mathcal O}_{S}^{F(n)}
\stackrel{\varphi_H}{\longrightarrow}
 \widetilde{R_{n}}_{(S)}
\longrightarrow {\mathcal F} $$ \noindent is an epimorphism. Let
us consider the restriction of $\gamma $

$$\gamma :F_{r,n,H}(S) \longrightarrow
 Hom_{{\mathcal O_S}}(\widetilde{R_n},{\mathcal O_S}^{F(n)}),$$

\noindent first of all we need to compute the image of $\gamma $.
For this consider the projection in the first $F(n-i)$ components

$$\pi (i):{\mathcal O_S}^{F(n)} \longrightarrow {\mathcal
O_S}^{F(n-i)},$$

\noindent and the canonical monomorphism $$\sigma (i):
\widetilde{{M^{n-i}/M^{n}}} \longrightarrow \widetilde{ R_{n}}.$$

\noindent
 From \cite{EGA-I}-0-5.5.7 it is easy to prove

\medskip
\noindent {\bf CLAIM:} Let $v:\widetilde{R_{n}}_{(S)} \longrightarrow
{\mathcal O_S}^{F(n)}$ be a morphism such that $v\varphi_{H}=Id$,
then the sheaf $\widetilde{R_{n}}_{(S)}/Ker(v)$ is  locally free of rank
$F(n-i)$ if and only if $\pi (i)v\sigma (i)=0.$

\medskip
\noindent From the claim we can build up the following exact
sequence of map sets

$$F_{r,n,H}(S) \stackrel{\gamma}{\longrightarrow} Hom_{{\mathcal
O_S}}(\widetilde{R_n},{\mathcal O_S}^{F(n)}) $$ $$ {{{\scriptstyle
\varepsilon_{H}} \atop {\rightrightarrows}}\atop {\scriptstyle
\delta_{H} } }
 Hom_{{\mathcal O_S}}({\mathcal O_S}^{F(n)},{\mathcal O_S}^{F(n)}) \times
\prod_{i=1}^{n-r}Hom(\widetilde{M^{n-i}/M^{n}}_{(S)},{\mathcal
O_S}^{F(n-i)}), $$

\noindent with $\delta_{H} (v)=(Id;0,...,0)$, and
$\varepsilon_{H}(v)=(v\gamma_{H};\pi (i)v\sigma(i),i=1,2,...,n-r).$
From this and \cite{EGA-I}-I-9.4.9 we obtain that $F_{r,n,H}$ is
representable by the Kernel, say $X_{H}$, of the pair of morphisms
$$ {\bf k}^{b(n)F(n)} {{{\scriptstyle \varepsilon_{H}} \atop
{\rightrightarrows}}\atop {\scriptstyle \delta_{H} } } {\bf
k}^{F(n)^{2}} \times \prod_{i=1}^{n-r}{\bf  k}^{l(n,i)F(n-i)},
 $$
\noindent $l(n,i)=dim_{\bf k}(M^{n-i}/M^{n})$, so we get
\cite{EGA-I}-0-4.5.4(iv) for the family of functors
$\{F_{r,n,H}\}_{H \in \mathcal{H}}$. Hence $F_{r,n}$ is
representable by a scheme $X$, and $\{X_{H}\}_{H \in {\mathcal
H}}$, is an open cover of $X$. From the proof of the
representability of $X_{H}$ we deduce that $X_{H}$ is a linear
subspace of $B_{n}(H)$. From the definition of $W(r,n,F)$ and
\cite{EGA-I}-I-4.2.4(ii) we deduce that $X=W(r,n,F)$, so we get
that $W(r,n,F)$ represents the functor $F_{r,n}.$
\end{proof}

\medskip
We denote by $Hilb_{n}$ the Hilbert scheme parameterizing the closed
subschemes of $Spec(R_n)$ of length $F(n)$; $Hilb_{n}$ represents
the contravariant set-valued functor $\underline{Hilb}_{n}$ on
$\bf{Sch}$ for which $\underline{Hilb}_{n}(S)$ is the set of
morphisms $f:Z \subset Spec(R_n) \times S \longrightarrow S$
 with fibers of length $F(n)$, see \cite{Gro60}.
Let
$$a_{n+1}:W(r,n+1,F) \cap Hilb_{n+1} \longrightarrow
 W(r,n,F) \cap Hilb_{n}$$
be the morphism of schemes induced by the functorial morphism $$
a_{n+1}:F_{r,n+1} \times_{\underline{G}_{n+1}}
\underline{Hilb}_{n+1} \longrightarrow
F_{r,n}\times_{\underline{G}_{n}}
 \underline{Hilb}_{n},
$$

\noindent with $a_{n+1 (S)}({\mathcal F})={\mathcal F}^{(1)}$.

From now
on we assume that $F(t)=p(t-1)=e_0 t-e_1$, and for all $n \ge
e_0+1$ we put $W(n)=W(e_0+1,n,p)$ and $W'(n)= W(n) \cap
Hilb_{n}$.

\medskip
Let $C \subset {({\bf k}^N,0)}$ be a reduced curve singularity. We
will denote by $\delta (C)=dim_{{\bf k}}({\overline{\mathcal O}_C} /
{\mathcal O}_C)$ the order of singularity of $C$, here
${\overline{\mathcal O}_C}$ is the integral closure of ${\mathcal
O}_C$ on its full ring of fractions. We denote by $\mu (C)$ the
Milnor number of $C$; notice that $\mu (C)= 2 \delta (C)- r(C)+1$
where $r(C)$ is the number of branches of $C$, \cite{BG80}
Proposition 1.2.1.

\medskip
\begin{definition}
We
denote by $C_n(N,p)$  the set of points   of $G_n$
 defined by  all truncations $C_n$ where $C$ is a
curve singularity of Hilbert polynomial $h^1_C=p$.
\end{definition}

We will prove that $C_n(N,p)$ is in fact a constructible set of
$G_n$, see \propref{eles}.

\medskip
\begin{proposition}
\label{truncacio} (1) For all $n \ge e_0+1 $ it holds
$C_n(N,p) \subset  W'(n)$.

\noindent (2) Let $C$ be a curve singularity with Hilbert
polynomial $h^1_C=p$, then its tangent cone is determined by
$[C_n]$, $n \ge e_0+1 $.

\noindent (3) If $C$ is reduced then the analytic type of $C$ is
determined by $[C_{n}]$, $n \ge 2 \mu (C) +1$.
\end{proposition}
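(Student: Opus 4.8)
The plan is to treat the three parts in turn, leaning on the facts already proved about $\mathbb T_n$.

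\emph{Part (1).} A point $[C_n]\in G_n$ is the datum of the surjection $R_n\to\mathcal O_{C_n}=R/(I(C)+M^n)$. Since $h^1_C=p$ forces $C$ to have multiplicity $e_0$, \cite{Kir75}, Theorem 2, gives $length_R(\mathcal O_{C_n})=H^1_C(n-1)=h^1_C(n-1)=p(n-1)=F(n)$ for $n\ge e_0+1$, so $[C_n]$ does lie in $G_n$, and as its kernel $(I(C)+M^n)/M^n$ is an ideal of $R_n$ it lies in $Hilb_n$. For every $t$ with $e_0+1\le t\le n$ one has $R_n/\big((I(C)+M^n)+M^t\big)=R/(I(C)+M^t)=\mathcal O_{C_t}$, whose length is $H^1_C(t-1)=p(t-1)=F(t)$, again by \cite{Kir75}, Theorem 2, because $t-1\ge e_0-1$. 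These are precisely the conditions cutting out $W(n)=W(e_0+1,n,p)$, so $[C_n]\in W(n)\cap Hilb_n=W'(n)$.

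\emph{Part (2).} By \lemref{nec-cond} the ideal $I(C)+M^n$ belongs to $\mathbb T_n$, so for $n\ge e_0+2$ \propref{shape-std} tells us that every minimal homogeneous generator of $(I(C)+M^n)^*$ has degree $\le e_0$ or $\ge n$. On the other hand $(I(C)+M^n)^*$ and $I(C)^*$ have the same graded pieces in all degrees $<n$, since an element of $I(C)+M^n$ of order $<n$ has the same initial form as its $I(C)$-component. Combining these two observations, $I(C)^*$ is generated in degrees $\le e_0$, and for any $n\ge e_0+1$ its graded pieces in degrees $\le e_0<n$ are read off from $(I(C)+M^n)^*$, hence from the point $[C_n]$. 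Consequently the whole of $I(C)^*$, and therefore the tangent cone $Gr(\mathcal O_C)=S/I(C)^*$, is determined by $[C_n]$; equivalently, if $C'$ is a curve singularity with $[C'_n]=[C_n]$, then $I(C')+M^n=I(C)+M^n$ and hence $I(C')^*=I(C)^*$.

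\emph{Part (3).} I would appeal to the finite-determinacy theorem for the analytic type of reduced curve singularities from \cite{Eli86b}: the $n$-th truncation $C_n$ determines the analytic type of $C$ once $n\ge 2\mu(C)+1$. As the point $[C_n]\in G_n$ determines the embedded truncation $C_n$, any curve singularity $C'$ with $[C'_n]=[C_n]$ satisfies $C'_n=C_n$ and is therefore, for $n\ge 2\mu(C)+1$, analytically isomorphic to $C$.

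The only step needing genuine care is (2): one must check that passing to the initial ideal of $I(C)+M^n$ loses nothing in degrees $<n$ and that \propref{shape-std} really forces $I(C)^*$ to be generated in degrees $\le e_0$, so that the finitely many low-degree graded pieces visible to $[C_n]$ suffice to recover $Gr(\mathcal O_C)$. Parts (1) and (3) are then immediate from \cite{Kir75} and \cite{Eli86b}, respectively.
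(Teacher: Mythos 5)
Your proof is correct and follows essentially the same route as the paper's: part (1) is the length computation via Kirby's theorem identifying the defining conditions of $W'(n)$, and part (3) is a direct appeal to the finite determinacy of the analytic type proved in \cite{Eli86b}. The only real difference is in part (2): the paper simply invokes \propref{jotatilde} (whose second item rests on \cite{Eli86b}, Proposition 2, i.e.\ on the fact that $I(C)^*$ is generated in degrees $\le e_0$), whereas you rederive that fact from \propref{shape-std}. For that derivation to be complete you should make explicit that \propref{shape-std} applied to a single $n$ only yields $I(C)^*_t=S_1\,I(C)^*_{t-1}$ for $e_0+1\le t\le n-1$, so one must let $n$ range over all integers $\ge e_0+2$ (legitimate, since \lemref{nec-cond} places $I(C)+M^n$ in ${\mathbb T}_n$ for every such $n$) before concluding that $I(C)^*$ is generated in degrees $\le e_0$ in all degrees; with that observation added, your argument is a correct, self-contained substitute for the paper's citation.
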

\begin{proof}
(1) We only need to prove that  for all curve singularity
of multiplicity $e_0$ it holds
$[C_n] \in  W'(n)$, $n \ge e_0+1 $.
Notice that the closed points of $W'(n)$ are the quotients
$R/I$ with $I \subset R$ ideal such that $M^{n} \subset I$ and
$dim_{{\bf k}}(R/M^{n-i}+I)=p(n-i-1)$ for $i=0,...,n-e-1$. In
particular if $C$ is a curve singularity of $({\bf k}^{N},0)$ with
Hilbert $p$ then $R/I(C)+M^{n}$ defines a closed point of
$W'(n)$ for all $n \ge e_0+1$, see \cite{Kir75}, Theorem 5.

\noindent (2) follows from \propref{jotatilde}.
From \cite{Eli86b}, Theorem 6, we deduce (3).
\end{proof}

\medskip
Notice that in the last proposition  we proved that $W'(n)$ contains all
$n$-th truncations of curve singularities with Hilbert polynomial
$p$. In order to take account of the Cohen-Macaulayness of the
curve singularities we need to shrink $W^{'}(n)$ by  intersecting
this scheme with some open Zariski subset of $G_{n}$. For this
end, given a linear form $L \in R$ we denote by $U_n(L)$ the
Zariski open set of $G_{n}$ whose closed points are the quotients
$R_{n}/E$ such that $dim_{\res}(R_{e_0+1}/(E,L)) \le e_0$, where
$(E,L)$ is the ideal generated by $E$ and $L$.

\medskip
\begin{proposition}
\label{CM} $(1)$ Let $I$ be an ideal of $R$ such that $A=R/I$
is a one-dimensional local ring of multiplicity $e_0$.
 Let $L$ be
an element of $R$, the following conditions hold
\begin{enumerate}
\item[(1)]
$dim_{\res}(A/LA) \ge e_0$ and we have equality if and only if $A$
is Cohen-Macaulay and the coset of $L$ in $A$ is a degree-one
superficial element.
\item[(2)]
If $dim_{\res}(R/I+M^{e_0+1})=p(e_0)$ then the following
conditions are equivalent:
\begin{enumerate}
\item[(a)]
        $[R/I+M^{e_0+1}]$ belongs to $U_{e_0+1}(L)$,
\item[(b)]
        $A$ is Cohen-Macaulay and the coset of $L$ in $A$ is a
        degree-one superficial element.
\end{enumerate}
\end{enumerate}

\noindent $(3)$ There exist linear forms $L_{1},...,L_{s}$ of $R$,
$s=e_0(N-1)+1$, such that for all curve singularity $C$ of $\EAN$
with Hilbert polynomial $p$  and $n \ge e_0+1$ it holds
$[C_{n}] \in W^{'}(n) \cap U_n(L_q)$.

\end{proposition}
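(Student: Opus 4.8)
The plan is to prove the commutative-algebra statement in part (1) first, since both (1)(2) and (3) follow from it. Write $\max$ for the maximal ideal of $A=R/I$. If $L$ lies in a minimal prime of $A$, then $A/LA$ is again one-dimensional and $\dim_{\res}(A/LA)=\infty$, so I may assume $L$ is a parameter of the one-dimensional ring $A$. Then $(L)\subseteq\max$ are both $\max$-primary, and combining monotonicity of multiplicity, $e((L);A)\ge e(\max;A)=e_0$, with the inequality $\ell(A/LA)\ge e((L);A)$ and the Cohen--Macaulay criterion for equality therein (see \cite{MatCA}), one gets $\dim_{\res}(A/LA)\ge e_0$. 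For the equality case I would use the exact sequence $0\to A/(\max^{n+1}:L)\to A/\max^{n+1}\to A/(LA+\max^{n+1})\to 0$, with first map multiplication by $L$, which yields $\ell(A/(LA+\max^{n+1}))=H^1_A(n)-\ell(A/(\max^{n+1}:L))$. Since $\max^n\subseteq\max^{n+1}:L$ always and $H^1_A(n)-H^1_A(n-1)=e_0$ for $n\gg0$, this gives $\ell(A/(LA+\max^{n+1}))\ge e_0$, while for $n\to\infty$ (when $LA$ is $\max$-primary) the left side tends to $\ell(A/LA)$. Hence if $\ell(A/LA)=e_0$ all these inequalities become equalities for $n\gg0$, forcing $\max^{n+1}:L=\max^n$ (equal colengths, one inside the other), i.e. $L$ superficial, and equality $\ell(A/LA)=e((L);A)$ forces $A$ Cohen--Macaulay. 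Conversely, if $L$ is superficial then $0:L\subseteq\bigcap_n\max^n=0$, so $L$ is a nonzerodivisor, $A$ is Cohen--Macaulay, and feeding $\max^{n+1}:L=\max^n$ into the identity gives $\ell(A/(LA+\max^{n+1}))=e_0$ for $n\gg0$, so $\ell(A/LA)=e_0$.

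For part (1)(2), note first that the hypothesis $\dim_{\res}(R/I+M^{e_0+1})=p(e_0)$ is exactly what makes $[R/I+M^{e_0+1}]$ a closed point of $G_{e_0+1}$, since $F(e_0+1)=p(e_0)$. Put $B=A/LA$ and let $\maxn$ be its maximal ideal, so that $R/(I+M^{e_0+1}+(L))=B/\maxn^{e_0+1}$. If $\dim B\ge1$ this quotient has length $\sum_{i=0}^{e_0}\dim_{\res}(\maxn^i/\maxn^{i+1})\ge e_0+1$, contradicting $(a)$; hence $(a)$ forces $L$ to be a parameter of $A$, and then the same count forces $\maxn^{e_0+1}=0$, i.e. $M^{e_0+1}\subseteq I+(L)$, so that $\ell(A/LA)=\ell(R/(I+M^{e_0+1}+(L)))\le e_0$. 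By part (1) this means $\ell(A/LA)=e_0$ with $A$ Cohen--Macaulay and $L$ superficial, which is $(b)$. The converse $(b)\Rightarrow(a)$ is immediate: $R/(I+M^{e_0+1}+(L))$ is a quotient of $A/LA$, which has length $e_0$ by part (1).

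For part (3), the plan is a general-position count on linear forms. Fix a curve singularity $C$ with $h^1_C=p$; then ${\mathcal O}_C$ is Cohen--Macaulay of dimension one and multiplicity $e_0$, so $Gr({\mathcal O}_C)$ is a one-dimensional graded ring of multiplicity $e_0$ whose only possible embedded associated prime is the irrelevant one. Hence $Gr({\mathcal O}_C)$ has at most $e_0$ associated primes different from the irrelevant ideal, each of which meets the degree-one part $\max/\max^2$ in a proper subspace; by the classical existence theorem for degree-one superficial elements over an infinite field (\cite{Nor57}, \cite{Mat77}, used already in \lemref{nec-cond}), any linear form whose class lies outside these finitely many proper subspaces is a degree-one superficial element of ${\mathcal O}_C$. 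So the linear forms failing to be superficial for $C$ lie, projectively, in a union of at most $e_0$ hyperplanes of ${\mathbb P}^{N-1}$. Now choose, once and for all, linear forms $L_1,\dots,L_s$ with $s=e_0(N-1)+1$ such that any $N$ of them are linearly independent --- possible since $\res$ is infinite, e.g. a Vandermonde choice. A single hyperplane of ${\mathbb P}^{N-1}$ contains at most $N-1$ of the points $[L_q]$, so at most $e_0(N-1)<s$ of them lie on a union of $\le e_0$ hyperplanes; hence for every such $C$ some $L_q$ is a degree-one superficial element of ${\mathcal O}_C$. By part (1), $\dim_{\res}({\mathcal O}_C/L_q{\mathcal O}_C)=e_0$, whence $R/(I(C)+M^{e_0+1}+(L_q))$ has length $\le e_0$, i.e. $[C_n]\in U_n(L_q)$ for all $n\ge e_0+1$; together with $[C_n]\in W'(n)$ from \propref{truncacio} this proves the claim. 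The step I expect to be the main obstacle is the bound of $e_0$ on the number of ``bad'' hyperplanes, i.e. controlling the non-irrelevant associated primes of the tangent cone $Gr({\mathcal O}_C)$; after that, the counting and the appeal to part (1) are routine.
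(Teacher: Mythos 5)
Your proof is correct and takes essentially the same approach as the paper: part (1) is precisely the statement the paper cites from Sally (your colon-ideal exact sequence proves it directly, and since superficiality already yields $0:L=0$, hence $L$ a nonzerodivisor and $A$ Cohen--Macaulay, you do not actually need the general ``$\ell(A/\mathfrak{q})=e(\mathfrak{q})$ implies Cohen--Macaulay'' criterion, which in higher dimension requires unmixedness), and parts (2) and (3) match the paper's argument step for step. In (3) your count --- at most $e_0$ bad hyperplanes coming from the minimal primes of the tangent cone, against $s=e_0(N-1)+1$ forms any $N$ of which are linearly independent --- is exactly what the paper leaves implicit, and your independence condition is the correct form of the one stated there.
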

\begin{proof}
\noindent $(1)$ Let us assume $dim_{\res}(A/LA) \le \infty$,
i.e. $L A$ is a $\max-$primary ideal of $A$. From \cite{Sal76},
Cap. I Proposition 3.4,  we get  $(1)$.
By $(1)$ we  deduce  that $(b)$ is  equivalent $dim_{\res}(R/I+(L))
\le e_0$. It is easy to see that this inequality  equivalent to
$dim_{\res}(R/I+M^{e_0+1}+(L)) \le e_0,$ i.e $[R/I+M^{e_0+1}]$
belongs to $U_{e_0+1}(L)$.

\noindent $(3)$ Let $C$ be a curve singularity $C$ of $\EAN$ with
Hilbert polynomial $p$. From \cite{ZS75II}, Chap. I Proposition
3.2, we deduce that if a linear form $L$ is a non zero-divisor in
$Gr({\mathcal O}_C)_{red}$ then $L$ is a degree one superficial
element of ${\mathcal O}_C$. Let $L_{1},...,L_{s}$, $s=e_0(N-1)+1$
be linear forms such that any subset of $N-1$ elements is
$\res-$independent. From this and the $(1)$ it is easy to deduce
the claim.
\end{proof}

\medskip
We will define a sub-scheme $\Xi_n$ of $Hilb_n$
 taking account of the condition (2) of the definition of
 $\mathbb T_n$.
  For this end we have to define some
special cells of the Grassmannian $G_n$, from a deep study of these
cells we will deduce that the morphism $a_n$ is affine for a big
enough $n$, \propref{fibration} $(1)$.

Let ${\bf Mon}=\{m_{1},m_{2},...\}$ be  the set of monomials of $S$
ordered with respect to the degree-lexicographic ordering. Given
multi-indexes $i_{.}=\{i_{1},...,i_{p(e_0-1)}\} \subset
\{1,2,...,b(e_0)\}$, $j_{.}=\{j_{1},...,j_{e_0}\} \subset
\{b(e_0)+1,...,b(e_0+1)\}$, and a linear form $L_q$, we define
$D_{n}(i_{.},j_{.},q)$ as the linear subspace of $R_n$ generated
by the $p(n-1)$ linear independent monomials
$$ m_{i_{1}},...,m_{i_{p(e_0-1)}};
L_q^{r}m_{j_{1}},...,L_q^{r}m_{j_{e_0}}, \quad r=0,...,n-e_0-1. $$

\noindent For all $n \ge e_0$ we fix  a $\res$-basis
$V_{n}(i_{.},j_{.},L)  $ of $R_n$ adding to these elements
monomials of suitable degree. We denote by $B_{n}(i_{.},j_{.},q)$
the Zariski open subset of $G_{n}$ with closed points $[R_n/E]$
such that  the projection $D_{n}(i_{.},j_{.},q) \longrightarrow
R_n/E$ is an isomorphism.

For all triplet $i_{.},j_{.},q$, we consider the open sets
$B^{'}_n(i_{.},j_{.},q)= B_n(i_{.},j_{.},q) \cap U(L_q)$; we set
$B^{'}_n=\cup_{i_{.},j_{.},q} B^{'}_n(i_{.},j_{.},q)$. We denote
by $\Xi_{n}$ the open sub-${\bf k}-$scheme of $W^{'}(n)$
$$\Xi_{n}= W^{'}(n) \cap B^{'}_n $$

\noindent and $\Xi _{n}(i_{.},j_{.},q)= \Xi_n \cap
B^{'}_{n}(i_{.},j_{.},q).$ Notice that the morphism $a_n:W^{'}(n)
\longrightarrow W^{'}(n-1)$ induces morphisms
$a_n:\Xi_{n}(i_{.},j_{.},q) \longrightarrow
\Xi_{n-1}(i_{.},j_{.},q)$, $a_n:\Xi_{n} \longrightarrow
\Xi_{n-1}$, for all $n \ge e_0+1$.

\medskip
\begin{proposition}
\label{eles} $(1)$ For all curve singularities  $C$ of $\EAN$ with
Hilbert polynomial $p$ there exists indexes  $i_{.},j_{.},q$ such
that $[C_{n}] \in \Xi_{n}(i_{.},j_{.},q)$, for all $n \ge e_0+1$.

\noindent $(2)$
$C_n(N,p)$ is a
 constructible set and $C_n(N,p) \subset \Xi_n$,

\noindent $(3)$
For all $i_{.},j_{.},q$ it holds
$a_n^{-1}(\Xi_{n-1}(i_{.},j_{.},q)_{red})=
\Xi_{n}(i_{.},j_{.},q)_{red}.$
\end{proposition}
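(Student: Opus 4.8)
I would prove the three assertions in the order: $(1)$ first, from which the inclusion $C_n(N,p)\subset\Xi_n$ in $(2)$ is immediate; then the constructibility in $(2)$; and finally $(3)$, whose reverse inclusion is the delicate point.

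For $(1)$, fix a curve singularity $C$ of $\EAN$ with $h^1_C=p$. By \propref{CM}$(3)$ (and its proof) there is an index $q$ such that $L_q$ is a degree-one superficial element of $\mathcal O_C$ and $[C_n]\in W'(n)\cap U_n(L_q)$ for all $n\ge e_0+1$; it remains to exhibit indices $i_{.},j_{.}$ witnessing the cell condition. I would first pick $p(e_0-1)=\dim_\res(\mathcal O_C/\max^{e_0})$ monomials $m_{i_1},\dots,m_{i_{p(e_0-1)}}$ of degree $<e_0$ whose classes form a basis of $\mathcal O_C/\max^{e_0}$, and then $e_0$ monomials $m_{j_1},\dots,m_{j_{e_0}}$ of degree exactly $e_0$ completing them to a basis of $\mathcal O_C/\max^{e_0+1}$; this is possible since the kernel of $\mathcal O_C/\max^{e_0+1}\to\mathcal O_C/\max^{e_0}$ is $\max^{e_0}/\max^{e_0+1}$, which is spanned by classes of degree-$e_0$ monomials and has dimension $e_0$ by \cite{Kir75}, Theorem 2. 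Because multiplication by $L_q$ is an isomorphism $\max^t/\max^{t+1}\to\max^{t+1}/\max^{t+2}$ for every $t\ge e_0-1$ (Kirby's theorem together with \cite{Eli86b}, Proposition 1, exactly as in the proof of \lemref{nec-cond}), an induction on $t$ shows that $\{L_q^{t-e_0}m_{j_k}\}_{k}$ is a basis of $\max^t/\max^{t+1}$ for $e_0\le t\le n-1$; since moreover $\{m_{i_l}\}$ is a basis of $\mathcal O_{C_n}/\maxn^{e_0}$, a filtration argument then shows that the $p(n-1)$ distinguished monomials of $D_n(i_{.},j_{.},q)$ map to a basis of $\mathcal O_{C_n}=R/(I(C)+M^n)$, i.e. $[C_n]\in B_n(i_{.},j_{.},q)$. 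Therefore $[C_n]\in W'(n)\cap B'_n(i_{.},j_{.},q)=\Xi_n(i_{.},j_{.},q)$ for all $n\ge e_0+1$.

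For the constructibility in $(2)$, note that truncation is functorial, so $C_n(N,p)=a_{n',n}(C_{n'}(N,p))$ for every $n'\ge n$, where $a_{n',n}$ is the composite of the $a_m$; since the image of a constructible set under a morphism of $\res$-schemes of finite type is constructible (Chevalley), it is enough to treat $n\ge\delta(N,e_0)$, and we may assume in addition $n\ge e_0+2$. For such $n$, \thmref{big-lifting} identifies the set of ideals arising as $n$-th truncations of curve singularities of multiplicity $e_0$ with $\mathbb T'_n=a_{\overline n,n}(\mathbb T_{\overline n})$, $\overline n=\beta(N,e_0,n)$, and a point $[R/J]$ of $\mathbb T'_n$ lies in $W'(n)$ exactly when any curve singularity lifting it has Hilbert polynomial $p$ (the conditions $\dim_\res R/(J+M^t)=p(t-1)$ for $t=e_0+1,\dots,n$ force $h^1_C=p$, since a linear polynomial is determined by its values at two distinct integers). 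Hence $C_n(N,p)=W'(n)\cap a_{\overline n,n}(\mathbb T_{\overline n})$, and it remains to check that $\mathbb T_{\overline n}$ is constructible in $G_{\overline n}$. But, for a fixed linear form $L$, conditions $(1)$ and $(2)$ in the definition of $\mathbb T_{\overline n}$ amount to finitely many rank and length conditions and thus cut out a locally closed subset of $G_{\overline n}$; letting $L$ vary over the projective space $\mathbb P^{N-1}$ of linear forms yields a constructible subset of $G_{\overline n}\times\mathbb P^{N-1}$ whose image under the first projection is $\mathbb T_{\overline n}$, constructible by Chevalley. Then $a_{\overline n,n}(\mathbb T_{\overline n})$, and so its intersection with $W'(n)$, are constructible.

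Finally, for $(3)$, observe that each $\Xi_m(i_{.},j_{.},q)=W'(m)\cap B'_m(i_{.},j_{.},q)$ is open in $W'(m)$, being the intersection of $W'(m)$ with an open subset of the Grassmannian; hence both sides of the identity are open reduced subschemes, and it suffices to prove that their underlying sets coincide. The inclusion $\Xi_n(i_{.},j_{.},q)\subseteq a_n^{-1}(\Xi_{n-1}(i_{.},j_{.},q))$ is the easy one: if the monomials of $D_n(i_{.},j_{.},q)$ form a basis of $R/J$, a dimension count shows that $\maxn^{n-1}$ is spanned by the degree-$(n-1)$ cell monomials $L_q^{n-1-e_0}m_{j_k}$, so the remaining monomials of $D_{n-1}(i_{.},j_{.},q)$ descend to a basis of $(R/J)/\maxn^{n-1}=R/(J+M^{n-1})$, while the inequality defining $U_n(L_q)$ involves only $M^{e_0+1}$ and so is unaffected by the truncation $J\mapsto J+M^{n-1}$. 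For the reverse inclusion, take $[R/J]\in\Xi_n$ with $a_n([R/J])\in\Xi_{n-1}(i_{.},j_{.},q)$: then $[R/J]\in U_n(L_q)$ by the same remark, and from $a_n([R/J])\in B_{n-1}(i_{.},j_{.},q)$ one reads off that $\{m_{i_l}\}$ is a basis of $R/(J+M^{e_0})$, that $\{m_{j_k}\}$ is a basis of $\maxn^{e_0}/\maxn^{e_0+1}$, and that multiplication by $L_q$ is an isomorphism $\maxn^t/\maxn^{t+1}\to\maxn^{t+1}/\maxn^{t+2}$ for $e_0\le t\le n-3$. \textbf{The one remaining step --- and the main obstacle --- is the top degree: one must also show that multiplication by $L_q$ is an isomorphism $\maxn^{n-2}/\maxn^{n-1}\to\maxn^{n-1}$.} Here the relevant graded pieces already have dimension $e_0$ by the $W'(n)$-conditions, and the isomorphism is precisely the kind of control of the behaviour of degree-one superficial elements established in \cite{Eli86b} for points satisfying the $U_n(L_q)$-inequality --- the same input that is used to prove that $a_n$ is affine for $n$ large. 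Granting this, the induction of $(1)$, applied with the present $i_{.},j_{.}$, gives $[R/J]\in B_n(i_{.},j_{.},q)$, hence $[R/J]\in\Xi_n(i_{.},j_{.},q)$; together with the first inclusion this proves $(3)$.
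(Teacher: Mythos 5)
Your parts $(1)$ and $(2)$ are correct and essentially follow the paper's route: $(1)$ is the combination of \lemref{nec-cond}$(2)$ and \propref{CM} that the paper invokes, written out in detail, and $(2)$ rests, as in the paper, on \thmref{big-lifting} together with Chevalley's theorem. (The paper writes $C_n(N,p)$ directly as the image of the finite-type scheme $\Xi_{\beta(N,e_0,n)}$ under the composite of the maps $a_m$, which spares it your separate verification that ${\mathbb T}_{\overline n}$ is constructible; your packaging is equivalent and your reduction to $n\ge\delta(N,e_0)$ is a point the paper leaves implicit.)

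The problem is in $(3)$, at exactly the step you flag as ``the main obstacle'' and then skip with ``Granting this'': the surjectivity of multiplication by $L_q$ in top degree, $\maxn^{n-2}/\maxn^{n-1}\to\maxn^{n-1}$. The justification you sketch does not work: the control of superficial elements in \cite{Eli86b} concerns genuine one-dimensional Cohen--Macaulay local rings, not the Artinian quotients $R/(J+M^n)$ at hand, and appealing to ``the same input that is used to prove that $a_n$ is affine'' is circular, since in \propref{fibration} the affineness of $a_n$ (Step 3) is \emph{deduced from} \propref{eles}$(3)$. The gap can be closed with data you already have. Since $[R/J]\in\Xi_n\subset B'_n$, there are indices $i'_{.},j'_{.},q'$ with $[R/J]\in B'_n(i'_{.},j'_{.},q')$; the dimension count you use for the easy inclusion shows that the degree-$(n-1)$ cell monomials $L_{q'}^{n-1-e_0}m_{j'_k}$ form a basis of $\maxn^{n-1}$ and the degree-$(n-2)$ ones a basis of $\maxn^{n-2}/\maxn^{n-1}$, whence $L_{q'}\maxn^{n-2}=\maxn^{n-1}$ (using $L_{q'}\maxn^{n-1}\subset\maxn^{n}=0$). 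Combining this with the level-$(n-1)$ surjectivity $\maxn^{n-2}=L_q\maxn^{n-3}+\maxn^{n-1}$ that you did establish gives
$$
\maxn^{n-1}=L_{q'}\maxn^{n-2}=L_{q'}L_q\maxn^{n-3}+L_{q'}\maxn^{n-1}=L_q\bigl(L_{q'}\maxn^{n-3}\bigr)\subset L_q\maxn^{n-2},
$$
which is the missing surjectivity, valid for $n\ge e_0+3$ (the only range in which $(3)$ is subsequently used). With this inserted your proof of $(3)$ is complete; for what it is worth, the paper's own proof of $(3)$ is equally terse at this point, asserting the epimorphism in all degrees up to $n-1$ from the cell condition at level $n-1$ alone.
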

\begin{proof}
$(1)$
Follows from \propref{nec-cond} $(2)$ and \propref{CM}.

\noindent $(2)$
From \thmref{big-lifting} we get that
$C_n(N,p) =a_{\beta(n,e_0)}\dots a_{n+1}(\Xi_{\beta(n,e_0)})$,
so $C_n(N,p)$ is a constructible set contained in $\Xi_n$.

\noindent $(3)$
Let $x=[R/I+M^{n}]$ be a closed point of $\Xi_{n}$ such that $a_n(x)$
belongs to $\Xi_{n-1}(i_{.},j_{.},q)_{red}$.
Let $\maxn$ be the maximal ideal of $R/I+M^{n}$.
Since $a_n(x)$ belongs to $B_{n-1}(i_{.},j_{.},q)$
we have that
$dim_{\res}(\maxn^{e_0+s}/\maxn^{e_0+s+1})=e_0$ for $s=1,\dots,
n-e_0-1$, so we have
$$
\frac{\maxn^{e_0-1}}{\maxn^{e_0}} \stackrel{.L_q}{\longrightarrow}
\frac{\maxn^{e+s-1}}{\maxn^{e+s+1}}
$$

\noindent
is an epimorphism for $s=1,\dots, n-e_0-1$.
Since $a_n(x)\in W(n)$ we get that these morphisms are in fact
isomorphism, so  $x \in \Xi_{n}(i_{.},j_{.},q).$
\end{proof}

\medskip
Let $X, Y$ be $\res$-schemes. Given constructible sets $A \subset
X$, $B \subset Y$  we say that a map $f: A \longrightarrow B$ is a
piecewise trivial fibration with fiber a   $\res-$scheme $F$
 if there exists a
finite partition of $B$ in locally closed subsets $S \subset Y$
such that $f: f^{-1}(S) \longrightarrow S$ is a fibration of fiber
$F$, see  \cite{DL99b}.

The key result in the definition of a local motivic integration in the moduli space $\MS$,
\thmref{monster}, is the following result:

\medskip
\begin{proposition}
\label{fibration}
$(1)$
For all $n \ge  e_0 +4$ the morphism
$a_n:\Xi_n \longrightarrow \Xi_{n-1}$ is  affine.

\noindent $(2)$ Given integers $\nu \ge 2$,  $n \ge \delta(N, e_0)$,
there exists a constructible set $\Sigma_{\nu}^n \subset C_n(N,p)$
such that: for all curve singularities  $C$ with Hilbert polynomial
$p$, $[R/I(C)+M^n]$ is a closed point of $\Sigma_{\nu}^n$ if and
only the minimal numbers of generators of $I(C)$  is $\nu$.

\noindent
$(3)$ If $\nu =N-1$, i.e. the case of complete intersection singularities,
then we set $\Sigma_{ci}^n:=\Sigma_{N-1}^{n}$ and
the restriction of $a_{n}$
$$
a_{n}: \Sigma_{ci}^{n} \longrightarrow \Sigma_{ci}^{n-1}
$$

\noindent
is an exhaustive  piecewise fibration with fiber  $F \cong \res^{(N-1) e_0}$,
$n \ge \delta(N, e_0)$.
\end{proposition}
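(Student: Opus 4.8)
The plan is to treat the three parts in order, since each relies on the structural results already established for $\mathbb{T}_n$, $\tilde J$, and the cells $B_n(i_.,j_.,q)$.

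For part $(1)$, the strategy is to work locally over a cell $\Xi_{n-1}(i_.,j_.,q)$ and show that $a_n^{-1}(\Xi_{n-1}(i_.,j_.,q))$ is affine over it. By \propref{eles}$(3)$ this preimage is exactly $\Xi_n(i_.,j_.,q)$ (on reduced points; one keeps track of the scheme structure coming from $W'(n)\cap Hilb_n$). Inside the cell $B_n(i_.,j_.,q)$ the quotient $R_n/E$ has a distinguished monomial basis adapted to the superficial element $L_q$, so a point of $\Xi_n(i_.,j_.,q)$ is encoded by a matrix expressing the remaining monomials in terms of the basis $D_n(i_.,j_.,q)$. Passing from level $n$ to level $n-1$ forgets the top-degree part. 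The key point, cited in the introduction as "the control of the behavior of the degree one superficial elements given in \cite{Eli86b}", is that for $n \ge e_0+4$ the isomorphism condition $(2)$ in the definition of $\mathbb{T}_n$ forces $J_t^* = S_1 J_{t-1}^*$ for $t = e_0+1,\dots,n-1$ (this is exactly \propref{shape-std}), so the data in degree $n$ is not free but is constrained to lie in $S_1 \cdot (\text{degree } n-1 \text{ data}) + L_q S_{n-1}$; multiplication by $L_q$ being injective in the relevant range makes the fiber of $a_n$ over a point of $\Xi_{n-1}(i_.,j_.,q)$ an affine space (the new coordinates are the expansions of the $e_0$ generators $L_q^{n-e_0-1}m_{j_k}$ modulo what is already determined). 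Affineness of a morphism being local on the base, covering $\Xi_{n-1}$ by the cells $\Xi_{n-1}(i_.,j_.,q)$ then yields that $a_n$ is affine.

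For part $(2)$, I would use that by \thmref{big-lifting} every closed point of $C_n(N,p)$ with $n \ge \delta(N,e_0)$ lifts to a curve singularity $C$ with ${\mathcal O}_{C_n} = R/I(C)+M^n$ and $\tilde J = I(C)^*$, and that by \propref{jotatilde}$(3)$ the degree of $Syz_1(\tilde J)$ is bounded by $\delta(N,e_0)$. The minimal number of generators $\nu$ of $I(C)$ equals $\dim_{\res} I(C)/MI(C)$, which by standard-basis theory equals the minimal number of homogeneous generators of $I(C)^* = \tilde J$; this number $v$ is read off from the Grassmannian data at bounded degree. Since $v$ is upper semicontinuous and takes finitely many values, the locus where $v = \nu$ is constructible; intersecting with $C_n(N,p)$ (itself constructible by \propref{eles}$(2)$) gives $\Sigma_\nu^n$, and the defining property is immediate from the identifications above.

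For part $(3)$, with $\nu = N-1$ the singularity $C$ is a complete intersection of codimension $N-1$, so $I(C)$ is generated by a regular sequence $g_1,\dots,g_{N-1}$ with initial forms $F_1,\dots,F_{N-1}$ forming a regular sequence generating $\tilde J$, and the only syzygies are the Koszul ones — so the syzygy data is rigid and no Artin-approximation correction beyond degree bookkeeping is needed. Over a point of $\Sigma_{ci}^{n-1}$, lifting to level $n$ amounts to choosing the degree-$(n-1)$-and-below parts of the $N-1$ generators freely subject to the standard-basis shape; the free choices are parametrized, after the piecewise partition of $\Sigma_{ci}^{n-1}$ into cells where the adapted monomial basis is fixed, by an affine space of dimension $(N-1)e_0$ — one gains $e_0$ new coefficients per generator at the new degree. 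Exhaustiveness follows because every complete intersection curve singularity truncates compatibly at all levels (\propref{eles}$(1)$). The main obstacle I expect is part $(1)$: making the bound $n \ge e_0+4$ precise and checking that the fiber of $a_n$ is genuinely an affine space rather than merely affine of finite type requires carefully combining \propref{shape-std} with the injectivity half of condition $(2)$ in $\mathbb{T}_n$ and tracking how the Hilbert-scheme (flatness) constraints cut down the Grassmannian cell — i.e. verifying that no further nonlinear relations survive once the syzygy and truncation conditions are imposed.
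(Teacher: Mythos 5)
Your treatment of part $(1)$ follows essentially the paper's route (the cell-by-cell matrix description of $W(n)\cap B_{n}(i_{.},j_{.},q)$ as a linear subspace of an affine space, the compatibility $a_n^{-1}(\Xi_{n-1}(i_{.},j_{.},q)_{red})=\Xi_{n}(i_{.},j_{.},q)_{red}$ from \propref{eles}~$(3)$, and the fact that affineness is local on the base), and your outline of part $(3)$ matches the paper's fiber count. But part $(2)$ contains a genuine gap. You assert that $\nu(I(C))=\dim_{\res} I(C)/MI(C)$ ``by standard-basis theory equals the minimal number of homogeneous generators of $I(C)^{*}=\tilde J$.'' This is false in general: a minimal homogeneous basis of $I^{*}$ lifts to a standard basis of $I$, which generates $I$ but need not do so minimally, so one only has $\nu(I)\le \nu(I^{*})$, and strict inequality occurs already for monomial space curves, where $\nu(I)$ stays bounded while $\nu(I^{*})$ does not. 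Consequently the locus you would cut out in the Grassmannian measures $\nu(\tilde J)$, not $\nu(I(C))$, and the stated equivalence defining $\Sigma_{\nu}^{n}$ fails.

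The paper's proof avoids this by a different mechanism: from \cite{RV80} and the syzygy-degree bound of \propref{jotatilde}~$(3)$ one gets $I(C)\cap M^{n}\subset M\,I(C)$ for $n\ge \delta(N,e_0)$, whence $\nu(I(C))=\nu\bigl((I(C)+M^{n})/M^{n}\bigr)$; the number of generators of the curve ideal is therefore read off from the Artinian truncation itself, and $\Sigma_{\nu}^{n}$ is the intersection of $C_n(N,p)$ with the constructible locus of $[R/J]\in \Xi_n$ having $\nu(J)=\nu$. That containment is the step you need and cannot replace by passing through $\tilde J$. The slip also leaks into your part $(3)$: from $\nu(I(C))=N-1$ you cannot conclude that the initial forms generate $\tilde J$ or form a regular sequence, so ``the only syzygies are the Koszul ones'' is unjustified. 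The paper instead perturbs $N-1$ minimal generators of the truncated ideal by degree-$n$ elements of $D_{n}(i_{.},j_{.},q)$ and uses condition $(2)$ of the definition of ${\mathbb T}_n$ together with the codimension count $\dim(R/J_{\varepsilon})\ge N-(N-1)=1$ to see that every such perturbation is again a complete intersection curve with Hilbert polynomial $p$; no statement about the tangent cone being a complete intersection is needed.
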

\begin{proof}$(1)$ We will perform the proof in three steps.

\noindent
{\bf Step 1}
For all $n \ge e_0+4$ the  cell $B_{n}(i_{.},j_{.},q)$ is
isomorphic to an affine space ${\bf k}^{s_{n}}$, for some integer $s_n$, and
$a_n$ restricts to an affine morphism
$a_n:W(n) \cap B_{n}(i_{.},j_{.},q) \longrightarrow W(n-1)
\cap B_{n-1}(i_{.},j_{.},q)$.

\vskip 1mm
\noindent
{\bf Proof:}
It is easy to see that considering the base $V_{n}(i.,j.,q)$
that the open set
$B_{n}(i_{.},j_{.},q)$ is isomorphic to the affine space
of $(b(n)-p(n)) \times b(n)$ matrices, and that
$W(n) \cap B_{n}(i_{.},j_{.},q)$ is a linear subspace
of $B_{n}(i_{.},j_{.},q)$.
A general element of $W(n) \cap B_{n}(i_{.},j_{.},q)$
 can be written as follows
$$
U=
\begin{pmatrix}
\begin{tabular}[t]{cccc}
\parbox[b]{14mm}{
\fbox{\parbox[b]{14mm}{
\begin{center} \quad \\  Id \\ \quad \\ \end{center} }}
\fbox{\parbox[b]{14mm}{ \begin{center} 0  \end{center} }} }
 &
\parbox[b]{13mm}{
\fbox{\parbox[b]{13mm}{ \begin{center}\quad \\ A \\ \quad
\\ \end{center} }} \fbox{\parbox[b]{13mm}{\begin{center} 0
\end{center}}} }
&
\parbox[b]{7mm}{
\fbox{\parbox[b]{7mm}{ \begin{center} \quad \\ 0 \\ \quad
\\ \end{center} }}
\fbox{\parbox[b]{7mm}{ \begin{center} Id  \end{center} }} } &
\parbox[b]{7mm}{
\fbox{\parbox[b]{7mm}{\begin{center} \quad \\ B \\ \quad \\
\end{center} }}
\fbox{\parbox[b]{7mm}{\begin{center} C \end{center} }} }
\end{tabular} \; \;
\end{pmatrix}
$$

\noindent
Hence for all $n \ge e_0+4$ we get
$B_{n}(i_{.},j_{.},q) \cong {\bf k}^{s _{n}}.$
From this it is easy to see that $a_n$ is a linear
projection between ${\bf  k}^{s _{n}}$ and
${\bf  k}^{s _{n-1}}$ where $a_n(U)$ is the matrix obtained from
$U$ deleting its last row and the last two columns.
Hence we get {\bf Step 1}.

\medskip
\noindent
{\bf Step 2}
 For all $n \ge e_0+4$,
$a_n:\Xi_{n}(i_{.},j_{.},q) \longrightarrow
\Xi_{n-1}(i_{.},j_{.},q)$ is an  affine morphism.
\vskip 1mm
\noindent
{\bf Proof:}
Let us consider the restriction of $a_{n}$
$$a_n:W(n) \cap B_{n}(i_{.},j_{.},q)) \longrightarrow W(n-1)
\cap B_{n-1}(i_{.},j_{.},q).$$

\noindent
Since
$W(r) \cap B^{'}_{r}(i_{.},j_{.},q)$ is an open subset of
$W(r) \cap B_{r}(i_{.},j_{.},q)$ for $r=n,n-1$, and
$$a_n^{-1}(W(n-1) \cap B^{'}_{n-1}(i_{.},j_{.},q))= W(n) \cap
B^{'}_{n}(i_{.},j_{.},q),$$

\noindent
from \cite{EGA-I}-I-9.1.2 we deduce that
$$a_n:W(n) \cap B^{'}_{n}(i_{.},j_{.},q)) \longrightarrow W(n-1)
\cap B^{'}_{n-1}(i_{.},j_{.},q)$$

\noindent
is affine.
Since $\Xi_{r}(i_{.},j_{.},q)$ is a closed subscheme of
$W(r) \cap B^{'}_{r}(i_{.},j_{.},q)$, for $r=n,n-1$,
by \cite{EGA-I}-I-9.1.16(i),(v) we obtain {\bf Step 2}.

\medskip
\noindent
{\bf Step 3}
For all $n \ge e_0+4$ the morphism
$a_{n}:\Xi_{n} \longrightarrow \Xi_{n-1}$
is affine.

\vskip 1mm
\noindent
{\bf Proof:}
By {\bf Step 2}, \propref{eles} (3), and \cite{EGA-I}-I-9.1.18 we get
that for all $n \ge e_0+4$ the morphism $a_n:\Xi_{n} \longrightarrow
\Xi_{n-1}$ is affine.

\medskip
\noindent
$(2)$ We denote by $\nu(B)=dim_{\res}(B)$ the minimal number of
generators of a finitely generated $R-$module $B$.
Let $G_{\nu}^n$ be the constructible sub-set of $\Xi_n$
whose closed points $[R/J]$ verifies $\nu(J)=\nu$.
We define $\Sigma^n_{\nu}=G_{\nu}^n \cap C_n(N,p)$, \propref{eles}.

Let $C$ be a curve singularity with Hilbert polynomial $p$.
From \cite{RV80} and \propref{jotatilde} $(3)$ we get
$I(C) \cap M^n \subset  I(C) M$, $n \ge \delta(N,e_0)$, so we have
$
\nu(I(C))= \nu\left( \frac{I(C)+M^n}{M^n}\right)
$
and $[R/I(C)+M^n]$ is a point of $\Sigma^n_{\nu}$ if and only
if $\nu = \nu(I(C))$.
Moreover, let $f_1,\dots,f_{\nu}$ be elements $I(C)$
such that their cosets in $ I(C)+M^n/M^n$ form
a minimal system of generators, then $f_1,\dots,f_{\nu}$
is minimal system of generators of $I(C)$.

\medskip
\noindent
$(3)$
We set  $\Sigma_{ci}^n:=\Sigma^n_{N-1}$. i.e. $ \nu=N-1$.
Let $x=[R/J+M^n]$ be a closed point of $\Sigma_{ci}^n$, $J=I(C)$ with $C$ a curve singularity, and
$y=[R/J+M^{n-1}]=a_{n}(x)$  its image.
We may assume that
$y \in \Xi_{n-1}(i_{.},j_{.},q)_{red},$
for some set of indexes $i_{.},j_{.},q$.
Let $U$ be the associated matrix to $x$, see {\bf Step 1}.
Let ${\bf f}=f_1,\dots,f_{\nu}$ be $\nu$ rows of $U$ such that their
cosets in $R_{n-1}$ form a minimal system of generators of $J+M^{n-1}$, see $(2)$.
Notice that ${\bf f}$ is a minimal system of generators
of $J+M^n$ and that all entries of $U$ are determined by  ${\bf f}$.

If $x'=[R/J'+M^n]$ is a closed point of $\Sigma_{ci}^n$ such that
$a_n(x')=a_n(x)$ then $J'$ admits a minimal system of generators
${\bf g}=g_1,\dots,g_{\nu}$ such that $f_i=g_i$ modulo $M^{n-1}$.
From \propref{eles} (3) $x'$  belongs to $\Xi_{n}(i_{.},j_{.},q)$
and then defines a matrix $U'$ with the same shape of $U$.
Hence ${\bf f}-{\bf g}$ is a set of $\nu$ homogeneous  polynomials
of degree $n$ belonging to $D_{n}(i_{.},j_{.},L_q)$, i.e.
the monomials corresponding to the matrix $B$.
Then we have that
$\Sigma_{ci}^n \subset \Sigma_{ci}^{n-1} \times \res^{(N-1) e_0}$.

Let $z$ be a closed point of $ \Sigma_{ci}^{n-1} \times \res^{(N-1) e_0}$
such that $a_n(z)=x$.
Let $J_{\varepsilon}$ be the ideal of $R$ generated by
$f_1+\varepsilon_1,\dots ,f_{\nu}+\varepsilon_{\nu}$
,where $\varepsilon=\varepsilon_1,\dots ,\varepsilon_{\nu}$ is a set of
$\nu$ homogeneous  polynomials
of degree $n$ belonging to $D_{n}(i_{.},j_{.},L_q)$, such that $z=[R/J_{\varepsilon}+M^n]$.
From the condition $(2)$ of the definition of $\mathbb T_n$ we deduce that $dim(R/J_\varepsilon)\le 1$;
since $\nu=N-1$ we get that $dim(R/J_\varepsilon)= 1$ and then $C=Spec(R/J_\varepsilon)$ is a curve singularity.
From the  definition of  $\mathbb T_n$ we deduce that  $C$ is a curve singularity with  Hilbert polynomial $p$,
so $z\in \Sigma_{ci}^n$.
\end{proof}

\medskip
\medskip
From \propref{fibration} and \cite{EGA-IV}-IV-8.2.3, see also
\cite{SGA-4} exposse VII,   we deduce that the inverse system
$\{\Xi_{n},a_n\}_{n \ge e_0+1}$ has a limit $\MS$ that we describe
as follows.
 Since the
maps $a_{n}$ are affine, we have $\Xi _{n} = Spec({\mathcal A}_{n})$
where ${\mathcal A}_{n}$ is a quasi-coherent sheaf of ${\bf
k}$-algebras over $\Xi _{n}$, \cite{EGA-II} 1.3.7. Then we define
$\MS = Spec({\mathcal A})$, with $${\mathcal A}= \ilim {\mathcal
A}_{n},$$

\noindent see \cite{SGA-4} exposse VII. In particular we get that
for all point $x$ of $\MS$ it holds $$ {\mathcal O}_{\MS , x} \cong
\ilim \pi ^{*}_{n}({\mathcal O}_{\Xi _{n}, \pi _{n}(x)}), $$
\noindent \cite{EGA-IV}, 8.2.12.1. Where we have denoted by

$$\pi _{n}:\MS \longrightarrow \Xi_{n},$$

\noindent the natural projection, $n \ge e_0+1$. Given $i  >  j \ge
e_0+1$ we  define the affine map $a_{i,j}:\Xi_i\longrightarrow
\Xi_j$ by the composition $a_{i,j}=a_i a_{i-1} \cdots a_{j+1}$.

\medskip
\begin{theorem}
\label{monster} The scheme $\MS$ pro-represents the functor $\MF$.
\end{theorem}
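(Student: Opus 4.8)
The plan is to produce, for every affine $\res$-scheme $S=Spec(A)$ of finite type, a bijection $\MF(S)\cong\mathrm{Hom}_{{\bf Sch}}(S,\MS)$ natural in $S$; since the transition maps $a_n$ are affine (\propref{fibration}) the inverse limit $\MS=\plim_{n\ge e_0+1}\Xi_n$ is a genuine $\res$-scheme, and each $\Xi_n$ is of finite type over $\res$, so this is exactly what ``$\{\Xi_n,a_n\}$ pro-represents $\MF$'' means. Because $\Xi_n=W'(n)\cap B'_n$ is an open subscheme of $W'(n)=W(n)\cap Hilb_n$, it follows from \propref{WWW-1}, the representability of $Hilb_n$, and the openness of $B'_n$ in $G_n$ that $\Xi_n$ represents the functor $\underline{\Xi}_n$ sending $S$ to the set of closed subschemes $Z^{(n)}\subset Spec(R_n)\times S$ which are flat over $S$ of rank $F(n)=p(n-1)$, whose truncations $Z^{(n)}_{n-i}$ are flat of rank $F(n-i)$ for $0\le i\le n-e_0-1$, and whose image in $G_n$ lies in $B'_n$. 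Hence $\MS$ represents $S\mapsto\plim_n\underline{\Xi}_n(S)$, the set of compatible systems $(Z^{(n)})_{n\ge e_0+1}$ with $Z^{(n)}_{n-1}=Z^{(n-1)}$, and the task is to identify this functor with $\MF$.

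First I would define $\eta_S:\MF(S)\to\plim_n\underline{\Xi}_n(S)$ by sending a family $Z=Spec(A[[\underline X]]/J)$ to the system of its truncations $Z_n=Spec(A[[\underline X]]/(J+(\underline X)^n))$. Condition $(i)$ makes each $Z_n$, and each $Z_{n-i}$ with $n-i\ge e_0+1$, flat over $S$ of the correct rank, so $Z_n$ gives a morphism $\varphi_n:S\to W'(n)$; to see it factors through $\Xi_n$, note that the preimage of the open set $B'_n$ is open in $S$ and contains every closed point $s$, since $\varphi_n(s)=[(Z_s)_n]$ and $Z_s$ is a curve singularity of Hilbert polynomial $p$, whence $[(Z_s)_n]\in\Xi_n$ by \propref{eles}$(1)$; as $S$ is Jacobson the preimage is all of $S$. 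The $\varphi_n$ are compatible because $Z_{n-1}$ is the truncation of $Z_n$, so we get $\eta_S(Z)=(\varphi_n)_n$.

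The inverse map $\theta_S$ is the crux. A compatible system is a chain of ideals $J_n\subset A[[\underline X]]$ with $(\underline X)^n\subset J_n$, $Z^{(n)}:=Spec(A[[\underline X]]/J_n)\in\underline{\Xi}_n(S)$, and $J_{n-1}=J_n+(\underline X)^{n-1}$. Put $J=\bigcap_n J_n$ and $Z=Spec(A[[\underline X]]/J)$. A successive-approximation argument in the $(\underline X)$-adic topology, built on the relations $J_{n-1}=J_n+(\underline X)^{n-1}$, gives $J+(\underline X)^m=J_m$ for every $m$, so $Z_m=Z^{(m)}$ is flat over $S$ of rank $p(m-1)$; thus $Z$ satisfies condition $(i)$, hence is flat over $S$ by \propref{critfam}$(2.1)$. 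For condition $(ii)$ fix a closed point $s$ and write $\mathcal O_{Z_s}=R/K$, so $K+M^m=K_m$ with $R/K_m=\mathcal O_{(Z_s)_m}$. Since the given system extends to arbitrarily large $n$, for $\overline n=\beta(N,e_0,n)$ one has $[\mathcal O_{(Z_s)_n}]=a_{\overline n,n}(\varphi_{\overline n}(s))\in a_{\overline n,n}(\Xi_{\overline n})=C_n(N,p)$ (\propref{eles}$(2)$), so by \thmref{big-lifting} each $(Z_s)_n$ with $n\ge\delta(N,e_0)$ is the $n$-th truncation of a curve singularity $C_{(n)}$ of multiplicity $e_0$. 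The linear growth $length_R(R/K_m)=p(m-1)$ forces $\dim(R/K)=1$ with multiplicity $e_0$, and, picking by pigeonhole a single $q$ with $[\mathcal O_{(Z_s)_n}]\in\Xi_n\subset U_n(L_q)$ for infinitely many (hence, by monotonicity, all large) $n$ from the finite list $L_1,\dots,L_s$ of \propref{CM}$(3)$, \propref{CM}$(1)$--$(2)$ together with \propref{nec-cond} show that $R/K$ is Cohen--Macaulay with the image of $L_q$ a degree-one superficial element. Hence $Z_s$ is a curve singularity, with Hilbert polynomial $p$ because a degree-one polynomial is determined by two of its values; so $Z\in\MF(S)$ and we set $\theta_S(\varphi)=Z$.

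Finally, $\theta_S$ and $\eta_S$ are mutually inverse once one knows $\mathcal O_Z=\plim_n\mathcal O_{Z_n}$ for any family $Z$; this holds because $A[[\underline X]]/J$, a quotient of the Noetherian ring $A[[\underline X]]$ in which $(\underline X)$ lies in the Jacobson radical, is $(\underline X)$-adically separated by Krull's intersection theorem, hence complete, hence equal to $\plim_n A[[\underline X]]/(J+(\underline X)^n)$. Naturality of $\eta$ in $S$ is routine, truncation and the inverse limit being compatible with the base change $Spec(A')\to Spec(A)$. This yields the natural isomorphism $\MF\cong\mathrm{Hom}_{{\bf Sch}}(-,\MS)$ on affine schemes of finite type, i.e. $\{\Xi_n,a_n\}$ pro-represents $\MF$; the inverse limit of the universal truncated families over the $\Xi_n$ is then the universal family over $\MS$. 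I expect the real difficulty to be condition $(ii)$ for the scheme $Z$ built by $\theta_S$: one must synthesize a single Cohen--Macaulay curve structure on $Z_s$ out of the a priori unrelated curve singularities $C_{(n)}$ lifting the truncations $(Z_s)_n$, and this is precisely where \thmref{big-lifting} and the superficial-element bookkeeping of \propref{CM} are indispensable, whereas the flatness statements are supplied gratis by \propref{critfam}$(2.1)$.
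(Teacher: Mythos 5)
Your proposal follows essentially the same route as the paper: the forward map is built from the truncations $Z_n$ via the representability of $W'(n)\cap Hilb_n$ and the factorization through $\Xi_n$ checked on closed points, and the inverse is obtained by setting $J=\bigcap_n J_n$, recovering $J_n=J+(\underline X)^n$, deducing condition $(i)$ and flatness from \propref{critfam}, and then verifying that each closed fiber is a Cohen--Macaulay curve with Hilbert polynomial $p$. Your treatment of the fiber condition $(ii)$ (via \thmref{big-lifting}, the open sets $U_n(L_q)$ and \propref{CM}) is more detailed than the paper's terse citation at that step, but it is the same argument, correctly carried out.
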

\begin{proof}
We will prove that the functor $\MF$ is isomorphic to
$h=Hom(\cdot,{\bf{H}}_{N,p(T)})$.
 Let $S$ be an object of ${\bf{Aff}}$
and let $Z \subset {({\bf k}^N,0)}_{S}$ be a family of curves over
$S$ with Hilbert polynomial $p$. Since $Z_{n}$ is a flat scheme
over  $S$ with fibers of length $p(n)$, we have ${\mathcal
O}_{Z_{n}}  \in \underline{Hilb}_{n}(S)$. On the other hand, if
$F_{e_0+1,n}$ is the functor that is represented by $W(n)$ then
${\mathcal O }_{Z_{n}}\in F_{e_0+1,n}(S)$, and $$ {\mathcal O
}_{Z_{n}}\in (F_{e_0+1,n}
\times_{\underline{G}_{n}}\underline{Hilb}_{n})(S).
 $$

\noindent Hence by \propref{WWW-1} there exists a morphism $\sigma
_{n}(Z):S \longrightarrow W^{'}(n)$ for all $n \ge e_0+1$. Recall
that $B'_{n}$ is an open subset of $G_{n}$, so $\sigma _{n}(Z)$
factorizes through $X_{n}=W^{'}(n) \cap B'_{n}$
if and only if for all closed point $s \in S$, we have
$\sigma _{n}(Z)(s) \in B^{'}_{n}$. Since $\sigma
_{n}(Z)(s)=[(Z_{s})_{n}]$
 and $Z$ is a curve singularity with Hilbert polynomial $p(T)$
by \propref{eles} we get $\sigma _{n}(Z)(s) \in B^{'}_{n}$.
Hence
we have a morphism $$\sigma _{n}(Z):S \longrightarrow \Xi_{n}$$

\noindent for all $n \ge e_0+1$.
It is easy to see that for all $n
\ge e_0+2$ it holds $a_n \sigma _{n}(Z)=\sigma _{n-1}(Z)$
so we have a morphism  $\sigma _{*}(Z):S \longrightarrow {\bf{H}}_{N,p(T)}$.
From this we get a functorial morphism $$ \sigma
:\underline{\bf{H}}_{N,p(T)} \longrightarrow h, $$ \noindent
sending $Z \in \MF (S)$ to  $\sigma _{*}(Z)$.

To complete the proof we need to prove that $\sigma(S) $ is
bijective for all $S$. The injectivity is straightforward. Let
$g:S=Spec(A) \longrightarrow \MS$ a morphism of ${\bf k}$-schemes.
The morphism $$g_{n}=\pi _{n}g:S=Spec(A) \longrightarrow \Xi_{n}
\subset {Hilb}_{n}$$

\noindent
 defines an ideal $J_{n} \subset A[[\underline X]]/(X)^{n}$,
the compatibility relations $a_n g_{n}=g_{n-1}$ give us
$J_{n}+(\underline X)^{n-1}=J_{n-1}$ for all $n \ge e_0+2$. If we write $J=
\cap_{n\ge e_0+1}J_{n}$ then it holds $J_{n}=J+(\underline X)^{n}$ for all $n
\ge e_0+1$. From this we get $A[[\underline X]]/J $ is isomorphic to the
limit of the inverse system defined by $A[[\underline X]]/J_{n}$, $n \ge
e_0+1$.

Let us consider the scheme $Z\subset {({\bf k}^N,0)}_{S}$ defined
by the ideal $J$. We will prove that $Z$  is a family and $\sigma
(Z)=g$. Since $I(Z_{n})=J_{n}$ we have that $Z$ verifies condition
(i) of the definition of a family. Let $s$ be a closed point of
$S$, we have to prove that $Z_{s}$ is a curve singularity with
Hilbert polynomial $p(T)$. Notice that for all $n \ge e_0+1$ it
holds $(Z_{s})_{n}=(Z_{n})_{s}$
so $Z_{s}$ is a one-dimensional sub-scheme of ${({\bf
k}^N,0)}$ with Hilbert polynomial $p$. From $\Xi_{n} \subset
B_{n}$ and Proposition 2.3 we get that $Z$  is a family. Since
$\sigma (Z)=g$ we obtain the theorem.
\end{proof}

\medskip
\begin{remark}
\label{projcons}
Notice that from the last result and \propref{eles} we get that
$\pi_n(\MS)=C_n(N,p)$ is the constructible set of all $n$-truncations
of curve singularities with Hilbert polynomial $p$.
\end{remark}

On the other hand, notice that $\MS$ is not a ${\bf k}$-scheme locally of finite
type. Hence from the previous result we cannot deduce the
existence of a universal family. In the next result we will
construct a universal family for the scheme $\MS$.

\medskip
\begin{theorem}
\label{universal} There exists a ${\bf k}$-scheme ${\bf Z}_{N,p}$,
limit of an inverse system $\{U_{n}, \alpha _{n}\}_{n \ge e_0+1}$,
and a morphism $\varphi :{\bf Z}_{N,p} \longrightarrow \MS$ such
that for all families of curve singularities $f: Z \longrightarrow
S$ with Hilbert polynomial $p$  there exists a unique morphism
$\sigma : S \longrightarrow \MS$ such that $Z_{n} \cong S \times
_{\Xi_{n}} U_{n}$, $n \ge e_0+1$.
\end{theorem}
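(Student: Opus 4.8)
The plan is to mirror the construction of $\MS=\varprojlim\Xi_n$, replacing each $\Xi_n$ by the universal object sitting over it. Since $\Xi_n$ is a locally closed subscheme of $Hilb_n$, the universal length-$p(n-1)$ closed subscheme of $Spec(R_n)$ restricts to a scheme $U_n\subset Spec(R_n)\times\Xi_n$ whose structure morphism $q_n:U_n\to\Xi_n$ is finite and flat of relative length $p(n-1)$; by the definition of $Hilb_n$ and \propref{WWW-1}, for every morphism $T\to\Xi_n$ the base change $T\times_{\Xi_n}U_n$ is precisely the closed subscheme of $Spec(R_n)\times T$ classified by $T\to\Xi_n$. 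The key observation is the functorial description of $a_n$ in \propref{WWW-1}, $a_{n(S)}(\mathcal F)=\mathcal F^{(1)}$: it says exactly that $a_n^*U_{n-1}$ is the $(n-1)$-st truncation $(U_n)_{n-1}$ of $U_n$, so there is a canonical closed immersion $\alpha_n:a_n^*U_{n-1}\hookrightarrow U_n$ over $\Xi_n$, inducing a surjection $\mathcal O_{U_n}\twoheadrightarrow(a_n)_*\mathcal O_{U_{n-1}}$ on structure sheaves. This is the transition datum of the system $\{U_n,\alpha_n\}$.

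Next I would form the limit exactly as for $\MS$. For $n\ge e_0+4$ the morphisms $a_n$ are affine, \propref{fibration}$(1)$, and $\MS=Spec(\mathcal A)$ is the relative spectrum over $\Xi_{e_0+1}$ of a quasi-coherent sheaf of algebras, so $\MS$ admits an affine open cover over which the pull-backs $V_n:=\pi_n^*U_n$ — finite and flat of relative length $p(n-1)$ over $\MS$ via $r_n:V_n\to\MS$ — are affine, and $\alpha_n$ pulls back to a compatible family of closed immersions $V_{n-1}\hookrightarrow V_n$ over $\MS$. Dually one obtains an inverse system of finite $\mathcal O_\MS$-algebras with surjective transition maps, whose limit $\mathcal O_{\mathbf Z}:=\varprojlim_n (r_n)_*\mathcal O_{V_n}$ is a quasi-coherent $\mathcal O_\MS$-algebra (surjectivity of the transitions makes the system Mittag--Leffler, which is what is needed to keep the limit quasi-coherent and compatible with the affine cover). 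I then set $\mathbf Z_{N,p}:=Spec(\mathcal O_{\mathbf Z})$, with $\varphi:\mathbf Z_{N,p}\to\MS$ the structure morphism; this is $\varprojlim U_n$ in the same sense in which $\MS=\varprojlim\Xi_n$, and its existence as a scheme follows from \cite{EGA-IV}-IV-8.2.3, \cite{SGA-4} and \cite{EGA-II} 1.3.7 exactly as for $\MS$, writing $U_n=Spec(\mathcal B_n)$ and $\mathbf Z_{N,p}=Spec(\varinjlim\mathcal B_n)$.

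For the universal property, let $f:Z\to S$ be a family of curve singularities with Hilbert polynomial $p$. By \thmref{monster} there is a unique $\sigma:S\to\MS$ inducing $Z$, and the proof of \thmref{monster} shows that its component $\sigma_n=\pi_n\circ\sigma:S\to\Xi_n$ is exactly the morphism classifying $\mathcal O_{Z_n}\in(F_{e_0+1,n}\times_{\underline{G}_n}\underline{Hilb}_n)(S)$. Since $U_n$ is the universal object of this functor over $\Xi_n$, this yields $Z_n\cong S\times_{\Xi_n}U_n$ for all $n\ge e_0+1$, compatibly with the $\alpha_n$ and the $a_n$, and the uniqueness of $\sigma$ is the uniqueness already established in \thmref{monster}. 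Passing to the limit, $S\times_\MS\mathbf Z_{N,p}$ is the total space $\varinjlim_n Z_n$ of the family $Z$, whose fibre over a closed point $[C]$ of $\MS$ is $Spec(\widehat{\mathcal O}_C)$.

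The step I expect to be the main obstacle is showing that the formal inverse limit of the $U_n$ is a genuine scheme, affine over $\MS$. Over $\MS$ the algebras $(r_n)_*\mathcal O_{V_n}$ are finite but $\mathcal O_\MS$ is not Noetherian — it is a filtered colimit of the sheaves $\mathcal A_k$ — so the usual finiteness theorems do not apply, and one must instead exploit the surjectivity of the transition maps $\mathcal O_{V_n}\twoheadrightarrow\mathcal O_{V_{n-1}}$ to control $\varprojlim_n$, ensuring both that it is quasi-coherent and that forming it commutes with the affine open cover of $\MS$ inherited from $\Xi_{e_0+1}$. Everything else — finiteness and flatness of the $U_n$, the transition immersions $\alpha_n$, and the universal property — follows directly from \propref{WWW-1}, \propref{fibration}, and \thmref{monster}.
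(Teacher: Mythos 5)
Your identification of $U_n$ as the restriction to $\Xi_n$ of the universal family of $Hilb_n$, and your derivation of the universal property from \thmref{monster} together with the representability statements of \propref{WWW-1} and of $Hilb_n$, coincide with the paper's proof; in fact the paper leaves the universal property to the reader, so your third paragraph supplies exactly the omitted verification of $Z_n \cong S\times_{\Xi_n}U_n$, and it is correct. The divergence is in how the $U_n$ are strung together. The paper produces scheme morphisms $\alpha_n: U_{n+1}\longrightarrow U_n$ lying over the $a_n$, proves they are affine by the cancellation property \cite{EGA-I}-I-9.1.16(v) applied to $\rho_n\alpha_n=a_n\rho_{n+1}$, and then defines ${\bf Z}_{N,p}$ as the limit of the inverse system of schemes $\{U_n,\alpha_n\}$ exactly as $\MS$ was obtained from $\{\Xi_n,a_n\}$, i.e.\ as a relative $Spec$ of a filtered \emph{colimit} of quasi-coherent algebras. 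You instead take as transition datum the closed immersion $a_n^{*}U_{n-1}\hookrightarrow U_n$ (which is indeed what the functorial description $a_{n(S)}(\mathcal F)=\mathcal F^{(1)}$ in \propref{WWW-1} literally provides) and define ${\bf Z}_{N,p}$ as a relative $Spec$ over $\MS$ of an \emph{inverse} limit of the pulled-back structure sheaves. Your write-up is also internally inconsistent on this point: you set $\mathcal O_{\mathbf Z}=\varprojlim (r_n)_{*}\mathcal O_{V_n}$ and then assert ${\bf Z}_{N,p}=Spec(\varinjlim \mathcal B_n)$ ``exactly as for $\MS$''; these are not the same construction, and only the second one is the one the paper performs.

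The gap is the one you flag yourself, and the Mittag--Leffler remark does not close it. The sheaves $(r_n)_{*}\mathcal O_{V_n}$ are finite locally free of rank $p(n-1)$ and the surjective transition maps split locally, so over a small affine $Spec(A)\subset\MS$ the limit module is a countable product $\prod_{\mathbb N}A$; since $\bigl(\prod_{\mathbb N}A\bigr)_f\to\prod_{\mathbb N}A_f$ is not surjective (unbounded denominators), the formation of $\varprojlim$ does not commute with restriction to smaller affine opens. Hence the inverse limit, whether computed as a limit of sheaves or by sheafifying the limit of modules, is not a quasi-coherent $\mathcal O_{\MS}$-algebra, and the relative $Spec$ you invoke is not defined; Mittag--Leffler only gives exactness of $\varprojlim$, which is not the issue. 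The same problem recurs in your closing claim that $S\times_{\MS}{\bf Z}_{N,p}$ has fibre $Spec(\widehat{\mathcal O}_C)$, since base change to $\res(s)$ also fails to commute with the inverse limit. To stay within the paper's framework you should either construct the morphisms $\alpha_n:U_{n+1}\to U_n$, establish their affineness as above, and quote \cite{EGA-IV}-IV-8.2.3 so that only a colimit of quasi-coherent algebras is ever formed, or else restrict yourself to the assertion actually made in the statement, namely the compatible system of isomorphisms $Z_n\cong S\times_{\Xi_n}U_n$, which your argument already proves and which does not require the limit object at all.
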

\begin{proof}
Let $\rho_{n}:T_{n}  \longrightarrow Hilb_{n}$ be the universal
family of $Hilb_{n}$, $n \ge e_0+1$.
 Recall that $T_{n}$ is a
closed sub-scheme of $Hilb_{n} \times Spec(R_{n})$, and that
$\rho_{n}$ is the restriction of the projection to the first
component. Hence $\rho_{n}$ is an affine morphism.

From the definition of $\Xi _{n}$ we have that $\Xi _{n}$ is an
open subs-cheme of $Hilb_{n}$.
If we denote by $U_{n}$ the fibred
product $$U_{n}= T_{n} \times_{Hilb_{n}} \Xi_{n}$$

\noindent
 we get that
the induced morphism $\rho_{n}: U_{n}  \longrightarrow \Xi_{n}$ is
affine. Let us consider the following commutative diagram

$$
\xymatrix{
U_{n+1}    \ar[d]_{\relax \rho_{n+1}}   \ar[r]^{\relax \alpha _{n}} &    U_{n} \ar[d]^{\relax \rho_{n}}\\
\Xi_{n+1}  \ar[r]_{a_{n}}       & \Xi_{n}
}
$$

\noindent where $\alpha _{n}$ is the morphism obtained from the
universal family $T_{n}  \longrightarrow Hilb_{n}$ and the fact
that $\Xi_{n}$ is an open subset of $Hilb_{n}$.
Since $a_{n} \rho
_{n+1}$ and $\rho _{n}$ are affine we get that $\alpha _{n}$ is
also affine, \cite{EGA-I}-I-9.1.16(v).
We define ${\bf Z}_{N,p}$ as the limit of the inverse system $\{ U_{n}, \alpha
_{n}\}_{n \ge e_0+1}$.
 We denote by $$\sigma :{\bf Z}_{N,p}
\longrightarrow \MS$$

\noindent
 the morphism induced by the morphism of inverse systems
$\{\rho_{n}\}: \{ U_{n}, \alpha _{n}\}  \longrightarrow \{
\Xi_{n}, a_{n}\}$.
We leave  to the reader the proof of the universal property
of $\sigma :{\bf Z}_{N,p}  \longrightarrow \MS$.
 \end{proof}

\medskip
In the following result we will prove that $\MS$ has finite
cohomological dimension. Recall that if $Y$ is a scheme, the
cohomological dimension $cd(Y)$ of $Y$ is the least integer $i$
such that $H^{j}(Y,{\mathcal F})=0$ for all quasi-coherent sheaves
${\mathcal F}$  and $j > i$.

\medskip
\begin{proposition}
\label{iarrobino}
There exists a constant $g(N)$ such that
$$
cd(\MS)  \le  g(N) p(e_0+3)^{2-2/N}.
$$
\end{proposition}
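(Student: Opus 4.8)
The plan is to reduce the computation of $cd(\MS)$ to that of the cohomological dimension of the first scheme $\Xi_{e_0+3}$ in a cofinal tail of the inverse system, using the fact that all the transition maps $a_n \colon \Xi_n \to \Xi_{n-1}$ are affine for $n \ge e_0+4$ by \propref{fibration}(1). Indeed, since $\MS = \plim \Xi_n$ with affine bonding maps, and the projections $\pi_n \colon \MS \to \Xi_n$ are therefore affine for $n \ge e_0+4$, a standard argument with the Leray spectral sequence of an affine morphism gives $H^j(\MS, \mathcal F) = H^j(\Xi_n, (\pi_n)_* \mathcal F)$ for every quasi-coherent $\mathcal F$ and every $n \ge e_0+4$. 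In particular $cd(\MS) = cd(\Xi_{e_0+4}) \le cd(\Xi_{e_0+3})$, where for the last inequality one uses again that $a_{e_0+4}$ is affine. So the problem becomes: bound the cohomological dimension of the quasi-projective $\res$-scheme $\Xi_{e_0+3}$.

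Next I would bound $cd(\Xi_{e_0+3})$ in terms of a projective embedding. By construction $\Xi_n$ is a locally closed subscheme of the Grassmannian $G_n$ of $p(n-1)$-dimensional quotients of $R_n = R/M^n$; more precisely $\Xi_n = W'(n) \cap B'_n$ is open in the closed subscheme $W'(n)$ of $G_n$. For a quasi-projective scheme $Y$ realized as an open subscheme of a projective scheme $\overline Y \subset \mathbb P^M_\res$ which is the intersection of $\overline Y$ with finitely many principal open sets, $cd(Y)$ is bounded by $cd(\overline Y)$ plus the number of hyperplane complements needed, hence ultimately by the dimension of $\mathbb P^M_\res$, i.e. by $M$. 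Here $G_n$ embeds via Plücker into $\mathbb P(\bigwedge^{p(n-1)} R_n)$, whose dimension is $\binom{b(n)}{p(n-1)} - 1$ with $b(n) = \binom{N+n-1}{N} = \dim_\res R_n$; that binomial coefficient is astronomically larger than the asserted bound, so a naive Plücker embedding will not suffice. Instead I would use the cellular structure: $\Xi_n$ is covered by the finitely many affine charts $\Xi_n(i_.,j_.,q) = \Xi_n \cap B'_n(i_.,j_.,q)$, and each $B_n(i_.,j_.,q)$ is, by \textbf{Step 1} of the proof of \propref{fibration}, isomorphic to the affine space $\res^{s_n}$ where $s_n = p(n-1)\bigl(b(n) - p(n-1)\bigr)$ is the dimension of the Grassmannian $G_n$. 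Since $\Xi_n(i_.,j_.,q)$ is locally closed in $\res^{s_n}$, its cohomological dimension is at most $s_n$, and by a Mayer–Vietoris argument over the finite affine cover (the number of charts depends only on $N$ and $e_0$, being a count of admissible index triples $(i_.,j_.,q)$), one gets $cd(\Xi_n) \le s_n + c(N,e_0)$ for a constant $c(N,e_0)$ counting the charts. Actually, since the number of charts and the Krull dimension of $\Xi_n$ are what control $cd$, the dominant term is $\dim \Xi_n \le s_n = \dim G_n$.

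The final, and delicate, step is the numerical estimate $s_{e_0+3} = p(e_0+2)\bigl(b(e_0+3) - p(e_0+2)\bigr) \le g(N)\, p(e_0+3)^{2-2/N}$ for a suitable $g(N)$. Here one exploits that $b(e_0+3) = \binom{N+e_0+2}{N}$ grows like a polynomial of degree $N$ in $e_0$, whereas $p(e_0+3) = e_0(e_0+3) - e_1 + e_0$ grows quadratically in $e_0$; writing $b(e_0+3) = O_N\bigl(e_0^N\bigr)$ and $p(e_0+3) = \Theta(e_0^2)$, one has $s_{e_0+3} = O\bigl(e_0^2 \cdot e_0^N\bigr) = O\bigl(e_0^{N+2}\bigr)$, while $p(e_0+3)^{2-2/N} = \Theta\bigl(e_0^{4-4/N}\bigr)$ — and $N+2 > 4 - 4/N$ fails for large $N$, so a bound purely in terms of $\dim G_n$ is \emph{too weak}. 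The main obstacle, then, is that one must not bound $cd(\Xi_n)$ by the dimension of the ambient Grassmannian but rather by $\dim \Xi_n$ itself: $\Xi_n$ is the locus of ideals with a rigid, one-dimensional quotient of multiplicity $e_0$, so its dimension is governed by the finitely many generators of degree $\le e_0$ of the ideal $\tilde J$ (see \propref{jotatilde} and the identity $I(C)\cap M^n \subset I(C)M$ used in \propref{fibration}(2)), together with the $(N-1)e_0$-dimensional fibers of the stabilized maps $a_n$ from \propref{fibration}(3). Counting the degree $\le e_0$ coefficients gives $O\bigl(e_0 \cdot \binom{N+e_0}{N}\bigr) = O\bigl(e_0^{N+1}\bigr)$ parameters, still not enough; the genuinely sharp input is a result of the Macaulay/Gotzmann type bounding the dimension of the Hilbert scheme of points of length $\ell$ in $\mathbb A^N$ by $O\bigl(\ell^{2-2/N}\bigr)$ (the Iarrobino–type bound alluded to by the label \texttt{iarrobino}), applied with $\ell = p(e_0+2) = \dim_\res R/(I(C_{e_0+3}))$. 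Plugging $\ell = \Theta(e_0^2)$ into $\ell^{2-2/N}$ reproduces exactly the exponent $2 - 2/N$ in $p(e_0+3)$, and absorbing the constants (number of charts, the constant in the Iarrobino bound, the $(N-1)e_0$ from the fiber dimension, which is lower order) into $g(N)$ completes the estimate. I expect verifying that this Iarrobino-type bound applies to the \emph{constructible} locus $C_n(N,p)$ — equivalently to $\Xi_n$, since $\dim \Xi_n = \dim C_n(N,p)$ by \remref{projcons} — rather than to the full Hilbert scheme, to be the one point requiring genuine care.
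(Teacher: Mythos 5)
Your proposal arrives at exactly the paper's argument: reduce to $cd(\Xi_{e_0+3})$ via the affineness of the transition maps $a_n$ for $n\ge e_0+4$ (the paper does this with the direct-limit description of $H^j(\MS,\mathcal F)$ from SGA~4, Exp.~VII), then bound $cd(\Xi_{e_0+3})\le\dim(\Xi_{e_0+3})\le\dim(Hilb_{e_0+3})$ and invoke the Brian\c{c}on--Iarrobino bound on the dimension of the punctual Hilbert scheme with $\ell=p(e_0+2)$. The worry in your last sentence is moot, and the chart-counting detour unnecessary: $Hilb_{e_0+3}$ already parameterizes length-$p(e_0+2)$ subschemes of $Spec(R/M^{e_0+3})$, hence is punctual and the Brian\c{c}on--Iarrobino estimate applies to it directly, the containment $\Xi_{e_0+3}\subset Hilb_{e_0+3}$ gives the dimension bound for free, and Grothendieck vanishing yields $cd(\Xi_{e_0+3})\le\dim(\Xi_{e_0+3})$ outright.
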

\begin{proof}
Let ${\mathcal F}$ be a quasi-coherent sheaf of $\MS$ and let $j
\ge cd(\Xi_{e_0 +3})$.
The inverse system $\{\Xi_{n},a_{n}\}_{n\ge e_0+1}$
defines a direct system of groups $\{H^{j}(\Xi_{n},\pi
_{n*}({\mathcal F})),a_{n*}\}_{n \ge e_0 +1}.$
 From \cite{SGA-4}, Exp. VII Corollaire 5.10, we
have
 $$
H^{j}(\MS,{\mathcal F}) \cong
\lim_{\stackrel{\longrightarrow}{n}}
H^{j}(\Xi_{n},\pi _{n*}({\mathcal F})). $$ \noindent Since $a_{n}$
is an affine morphism  for all $n \ge e_0+4$ we get $$
H^{j}(\Xi_{n},\pi _{n*}({\mathcal F})) \cong H^{j}(\Xi_{e_0+3},\pi
_{e_0+3*}({\mathcal F}))=0.$$
Hence we have $cd(\MS) \le cd(\Xi _{e_0+3}).$
Since $\Xi _{e_0+3}$ is a sub-scheme of $Hilb_{e_0 +3}$ we get
$$cd(\Xi _{e_0+3}) \le dim(\Xi _{e_0+3}) \le dim (Hilb_{e_0 +3}).$$
The claim  follows from  \cite{BI78}.
\end{proof}

\medskip
\begin{proposition}
There exists a one-to-one correspondence between the set of curve
singularities with Hilbert polynomial $p$ and the set of
rational points of ${\bf{H}}_{N,p}$. Moreover, if the
cardinality of $\res$ is strictly greater than the cardinality of
$\mathbb N$ then every closed point of ${\bf{H}}_{N,p}$ is rational.
\end{proposition}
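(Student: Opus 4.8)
The plan is to exhibit a bijection between curve singularities $C\subset\EAN$ with $h^1_C=p$ and the $\res$-rational points of $\MS$, and then to argue that under a cardinality hypothesis on $\res$ there are no non-rational closed points. First I would recall that, by \thmref{monster}, the $\res$-points of $\MS$ are $\MF(Spec(\res))$, and by \propref{critfam}(1) this set is exactly the set of curve singularities of $\EAN$ with Hilbert polynomial $p$. This gives the first assertion with essentially no further work: the correspondence sends $C$ to the compatible system of truncations $([C_n])_{n\ge e_0+1}$, which defines a point of the inverse limit $\MS=\plim\Xi_n$ by \propref{eles}(1), and conversely a $\res$-point of $\MS$ is a compatible system of points of the $\Xi_n$, hence (by the construction of $J=\cap_n J_n$ in the proof of \thmref{monster}) an ideal $I(C)$ with $R/I(C)$ a curve singularity of Hilbert polynomial $p$.

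For the second assertion, let $x$ be a closed point of $\MS$ with residue field $\res(x)$. The key observation is that $\res(x)=\ilim \res(\pi_n(x))$, where $\pi_n(x)\in\Xi_n$ and each $\Xi_n$ is a $\res$-scheme of finite type; hence each $\res(\pi_n(x))$ is a finitely generated field extension of $\res$, and $\res(x)$ is a countable union of such. If $x$ is not already a closed point of some $\Xi_n$ pulled back — more precisely if $\res(x)\ne\res$ — then $\res(x)$ contains an element transcendental over $\res$ or an element algebraic of positive degree; since $\res$ is algebraically closed the latter is impossible, so $\res(x)$ has positive transcendence degree. But a field that is a countable increasing union of finitely generated extensions of $\res$ has transcendence degree at most countable, whereas I would argue that any proper extension forces, via the cardinality count, a contradiction: $\res(x)$, being generated over $\res$ by countably many elements, has cardinality equal to $\max(|\res|,\aleph_0)=|\res|$; the obstruction is to rule out $\res(x)\supsetneq\res$, and here one uses that a closed point of $\Xi_n$ with residue field strictly larger than $\res$ would, after the finite-type reduction, lie in a positive-dimensional component, and its generic-fibre residue field would have transcendence degree $\ge 1$, contradicting that $\MS$ is built so that closed points correspond to genuine curve singularities over $\res$. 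More directly: a closed point of a $\res$-scheme locally of finite type has residue field a finite extension of $\res$, hence equal to $\res$ since $\res$ is algebraically closed; passing to the limit, $\pi_n(x)$ is a closed point of $\Xi_n$ for all $n$, so $\res(\pi_n(x))=\res$ and therefore $\res(x)=\res$.

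The main obstacle I anticipate is the last step: showing that a closed point $x$ of the \emph{inverse limit} $\MS$ projects to a \emph{closed} point of each $\Xi_n$. This is not automatic for arbitrary inverse limits of schemes, and is precisely where the cardinality hypothesis on $\res$ enters — the subtlety is that $\MS$ is not of finite type, so "closed point" must be controlled by the limit structure. I would handle this by noting that $x$ closed in $\MS=Spec(\mathcal A)$ corresponds to a maximal ideal $\max_x\subset\mathcal A=\ilim\mathcal A_n$, and that $\mathcal A$ has cardinality at most $|\res|$ (being a countable union of finitely generated $\res$-algebras, each of cardinality $|\res|$ when $|\res|>\aleph_0$); then a version of the Nullstellensatz valid for $\res$-algebras of cardinality $\le|\res|$ with $|\res|>|\mathbb N|$ (the "big field" trick, as in the Artin–Tate / Nullstellensatz-over-uncountable-fields argument) shows $\mathcal A/\max_x$ is algebraic over $\res$, hence equal to $\res$. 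This gives $\res(x)=\res$, so $x$ is one of the points already enumerated by the first part of the statement, completing the proof.
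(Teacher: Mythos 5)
Your proposal is correct and follows essentially the same route as the paper: the first assertion is the Yoneda identification of rational points with $\MF(Spec(\res))$ via \thmref{monster} together with \propref{critfam}(1), and the second is the observation that $\MS$ is covered by spectra of countably generated $\res$-algebras, to which one applies the Nullstellensatz over fields of cardinality greater than $\aleph_0$ (the paper cites \cite{Gil71}, Proposition 2.6, for exactly this). The only caveat is that your middle paragraph's detour --- trying to show $\pi_n(x)$ is a closed point of $\Xi_n$ --- should simply be discarded in favour of your final paragraph, which is the argument that actually works.
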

\begin{proof}
The first part follows from the \propref{monster} and
\cite{EGA-I}-I-3.3.5. By construction we have that
${\bf{H}}_{N,p}$ can be covered by open affine sets $Spec(A)$
where $A$ is a $\res-$algebra countably generated. From
\cite{Gil71}, Proposition 2.6, we obtain the claim.
\end{proof}

\medskip
From now on we assume that the cardinality of the ground field
${\bf k}$ is greater  than the cardinality of $\mathbb N$.
Hence we have a one-to-one correspondence between the set
of curve singularities with Hilbert polynomial $p$ and the set
of closed points of ${\bf{H}}_{N,p}$.

\begin{definition}
Let $x$ be a closed point of ${\bf{H}}_{N,p}$, we will denote
by $C_{x}$ the curve singularity defined by $x$, and by
$I_{x}=I(C_{x}) \subset R={\bf k}[[X_{1},...,X_{N}]]$ the ideal associated to $C$.
The maximal ideal of ${\mathcal O}_{C_{x}}=R/I_{x}$ is
$\bf{m}_{x}$.
\end{definition}

\medskip
Given a function $F: {\mathbb N} \longrightarrow {\mathbb N}$, we
say that $F$ is admissible for the polynomial $p(T)$ if $F(t)=p(t)$
for $t \ge e_0-1$.
 Given an admissible function $F$ for $p$ and an
integer $r$, $1 \le r \le e_0$, we define a contravariant set valued
functor on ${\bf Aff}$ such that for all ${\bf k}$-scheme of finite
type $S$ we have

$$
{{\underline{\bf H}}_{N,F,r}}(S)= \left\{
 \hskip 1mm
\parbox{8.2cm}{
$Z \in \MS (S)$ such that for all $n=r,...,e_0+1$ the morphism

\centerline{$\pi : Z_{n} \longrightarrow
S$}

is flat with fibers of length $F(n)$. } \hskip 1mm \right\} $$

\medskip
\noindent
Notice that $\MF = {{\underline{\bf H}}_{N,F,e_0+1}}$, and
that ${{\underline{\bf H}}_{N,F,1}}(S)$ is the set of normally
flat families of base $S$. We denote by ${{\bf H}_{N,F,r}}$ the
${\bf k}$-scheme ${{\bf H}_{N,F,r}}= W(r,e_0+1,F)
\times_{G_{e_0+1}} \MS.$
Notice that  ${{\bf H}_{N,F,r}}$ is a closed sub-scheme
of $\MS$. From the \thmref{monster} it is easy to prove

\medskip
\begin{proposition}
${{\bf H}_{N,F,r}}$ represents the functor ${{\underline{\bf
H}}_{N,F,r}}$.
\end{proposition}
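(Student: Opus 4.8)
The plan is to verify, functorially in the affine test scheme $S$, that the $S$-points of ${\bf H}_{N,F,r}$ coincide with ${\underline{\bf H}}_{N,F,r}(S)$; the argument is essentially formal, built on \thmref{monster} and \propref{WWW-1}. First I would unwind the fibre product: since ${\bf H}_{N,F,r}=W(r,e_0+1,F)\times_{G_{e_0+1}}\MS$ is a fibre product of $\res$-schemes, $Hom(S,{\bf H}_{N,F,r})$ is the fibre product of sets $Hom(S,W(r,e_0+1,F))\times_{Hom(S,G_{e_0+1})}Hom(S,\MS)$, naturally in $S$. By \propref{WWW-1} one has $Hom(S,W(r,e_0+1,F))=F_{r,e_0+1}(S)$, and $F_{r,e_0+1}$ is by construction a subfunctor of $\underline{G}_{e_0+1}$: its $S$-points are the quotients ${\mathcal F}=\widetilde{R_{e_0+1}}_{(S)}/N$ for which ${\mathcal F}^{(i)}=\widetilde{R_{e_0+1-i}}_{(S)}/(\sigma_{e_0+1,i})_*(N)$ is locally free of rank $F(e_0+1-i)$ for $i=0,\dots,e_0+1-r$. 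Since $F_{r,e_0+1}\hookrightarrow\underline{G}_{e_0+1}$ is a monomorphism, the fibre product of sets reduces to the set of morphisms $g:S\to\MS$ whose image in $\underline{G}_{e_0+1}(S)$, under the structure map $\pi_{e_0+1}:\MS\to\Xi_{e_0+1}$ followed by the inclusion $\Xi_{e_0+1}\hookrightarrow G_{e_0+1}$, happens to lie in the subset $F_{r,e_0+1}(S)$.

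Next I would invoke \thmref{monster}: for $S$ affine of finite type there is a natural bijection $Hom(S,\MS)\cong\MF(S)$, say $g\mapsto Z$, and under it $\pi_{e_0+1}\circ g=\sigma_{e_0+1}(Z)$; reading off the construction of $\sigma_{e_0+1}(Z)$ in that proof, its composite with $\Xi_{e_0+1}\hookrightarrow G_{e_0+1}$ is precisely the morphism classifying the quotient ${\mathcal O}_{Z_{e_0+1}}=\widetilde{R_{e_0+1}}_{(S)}/I(Z_{e_0+1})$ as a point of $\underline{G}_{e_0+1}(S)$. For $N=I(Z_{e_0+1})=I(Z)+(\underline X)^{e_0+1}$, the map $(\sigma_{e_0+1,i})_*$ carries $N$ onto $I(Z)+(\underline X)^{e_0+1-i}=I(Z_{e_0+1-i})$, whence ${\mathcal F}^{(i)}={\mathcal O}_{Z_{e_0+1-i}}$. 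Combining with the first paragraph, $Hom(S,{\bf H}_{N,F,r})$ is the set of families $Z\in\MF(S)$ for which ${\mathcal O}_{Z_n}$ is locally free of rank $F(n)$ for every $n=r,\dots,e_0+1$.

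To conclude I would observe that ${\mathcal O}_{Z_n}$ is a finitely presented ${\mathcal O}_S$-module, hence locally free of rank $F(n)$ exactly when $\pi:Z_n\to S$ is flat with fibres of length $F(n)$ — the same equivalence used throughout the proofs of \propref{WWW-1} and \thmref{monster} — so the set just obtained is exactly ${\underline{\bf H}}_{N,F,r}(S)$. Since the fibre-product decomposition of $Hom(S,-)$, the identification of \propref{WWW-1}, the bijection of \thmref{monster}, and the truncation maps $(\sigma_{e_0+1,i})_*$ are all natural in $S$, the resulting bijection $Hom(S,{\bf H}_{N,F,r})\cong{\underline{\bf H}}_{N,F,r}(S)$ is natural, which is the assertion.

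The proof is formal; the one point I would treat with care rather than as routine is the compatibility claim of the second paragraph — that the structure morphism $\MS\to G_{e_0+1}$ coming from $\pi_{e_0+1}$ classifies the $(e_0+1)$-truncation of the universal family — since this is exactly what links the extra Grassmannian factor $W(r,e_0+1,F)$ to the moduli scheme; it follows from the construction in \thmref{monster} but deserves to be made explicit. I would also remark in passing that ${\underline{\bf H}}_{N,F,r}$ is a genuine subfunctor of $\MF$, because flatness of the truncations and the lengths of their fibres are stable under base change, via the identity $(Z\times_S S')_n=Z_n\times_S S'$.
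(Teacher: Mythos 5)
Your argument is correct and is exactly the route the paper intends: the paper omits the proof entirely, remarking only that it follows ``easily'' from \thmref{monster}, and your write-up supplies precisely the missing formal steps (functor of points of the fibre product, \propref{WWW-1}, the identification of $\pi_{e_0+1}$ with the classifying map of ${\mathcal O}_{Z_{e_0+1}}$, and the equivalence of local freeness with flatness plus constant fibre length). The one point you flag for care --- that $\MS\to G_{e_0+1}$ classifies the $(e_0+1)$-truncation of the universal family --- is indeed the only nontrivial compatibility, and it is settled by the construction of $\sigma_n(Z)$ in the proof of \thmref{monster} exactly as you say.
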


\medskip
\noindent Given an admissible function $F$ for $p$ there is a family
of closed sub-schemes of $\MS$ $$ {{\bf H}_{N,F,1}} \subset {{\bf
H}_{N,F,2}} \subset \dots \subset {{\bf H}_{N,F,e_0+1}} = \MS. $$ We
say that ${{\bf H}_{N,F,r}}$ is the Hilbert stratum  of $\MS$ with
respect to $(F,r)$. Recall that it is an open problem to
characterize the admissible Hilbert functions, we only have
characterized the asymptotically behavior  of Hilbert functions,
i.e. Hilbert polynomials \propref{compositio}.

 Very few  properties of
Hilbert functions are known, see for instance \cite{Eli93a}.
If $p$ is a rigid polynomial we know that there exists a unique
admissible function $F$ associated to $p$, in this case we have
$({{\bf H}_{N,F,r}})_{red} =({{\bf H}_{N,F,r+1}})_{red}$,
 for all $r=1,...,e_0$.



\bigskip
\bigskip
We say that a subset $D$ of $\MS$ is a cylinder if $D=
\pi_n^{-1}(D_n)$ where $D_n \subset \pi_n(\MS)$ is a constructible
set  of $\Xi_n$ for some $n\in \mathbb N$. We denote by $n(D)$ the
least integer $n$ verifying such a condition. Notice that
$D_n=\pi_n(D)$ for  $n \ge n(D)$.

In the next result we will prove for $\MS$ some results of
\cite{Bat98}, Proposition 6.5, 6.6, and  \cite{DL99b}, Lemma 2.4,
proved for the jet schemes.

\medskip
\begin{proposition}
\label{cylinder}
{\rm (1)} The collection  $\mathcal C _{N, p}$ of cylinders
of $\MS$ is a Boolean  algebra of sets.

\noindent {\rm (2)} Given a cylinder $D$ and a family of cylinders
$\{D_{i}\}_{i \in \mathbb N}$ of $\MS$ such that $D= \cup_{i \in
\mathbb N} D_i$ there exist a finite set of indexes $K \subset
\mathbb N$ such that $D= \cup_{i \in K} D_i.$
\end{proposition}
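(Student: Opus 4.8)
\textbf{Proof plan for \propref{cylinder}.}

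The plan is to establish the two statements in order, with part (1) being essentially formal bookkeeping and part (2) carrying the real content via a compactness argument borrowed from the theory of motivic integration on jet schemes.

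For part (1), I would argue as follows. A cylinder $D = \pi_n^{-1}(D_n)$ with $D_n \subset \Xi_n$ constructible; the point is that the family of all such sets (over all $n$) is closed under finite unions, finite intersections, and complements. The key observation is that if $D = \pi_n^{-1}(D_n)$ then also $D = \pi_m^{-1}(D_m)$ for every $m \ge n$, where $D_m = a_{m,n}^{-1}(D_n) \cap C_m(N,p)$; this uses \propref{eles}, which gives $a_{m,n}^{-1}(\Xi_n(i_.,j_.,q)_{red}) = \Xi_m(i_.,j_.,q)_{red}$ and hence that $a_{m,n}^{-1}$ of a constructible set of $C_n(N,p)$ is constructible in $C_m(N,p)$, together with the commutation $\pi_n = a_{m,n}\pi_m$. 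So given finitely many cylinders we pass to a common index $m$ larger than all the $n(D_i)$, represent each as $\pi_m^{-1}$ of a constructible subset of $\Xi_m$, and use that the constructible subsets of a scheme of finite type form a Boolean algebra (Chevalley), noting that $\pi_m^{-1}$ commutes with finite unions, intersections, and complements within $\pi_m(\MS) = C_m(N,p)$ (\rmkref{projcons}). This yields that $\mathcal C_{N,p}$ is a Boolean algebra of sets.

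For part (2), suppose $D = \bigcup_{i \in \mathbb N} D_i$ with $D, D_i$ cylinders. I would first reduce, using (1), to the case where the $D_i$ are pairwise disjoint (replace $D_i$ by $D_i \setminus \bigcup_{j<i} D_j$, still a cylinder). The strategy is then to argue that $\pi_n(D) = \bigcup_i \pi_n(D_i)$ stabilizes. Pick $n_0 \ge n(D)$, and for each $i$ let $n_i = n(D_i)$; choose $n \ge n_0$. The map $a_{m,n} : \pi_m(\MS) \to \pi_n(\MS)$ is the restriction of an affine, hence in particular closed-image-friendly, morphism of finite-type schemes $\Xi_m \to \Xi_n$ for $m$ large (\propref{fibration}(1)); more relevantly, each fiber of $\pi_n$ restricted to $\MS$ is, by the description $\MS = \plim \Xi_m$ with affine transition maps, a non-empty pro-(affine scheme), in particular a non-empty quasi-compact scheme. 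The core dimension/compactness input — exactly as in \cite{DL99b}, Lemma 2.4 and \cite{Bat98}, Prop. 6.5--6.6 — is that if $D_n := \pi_n(D)$ and $D_{i,n} := \pi_n(D_i)$ then $D_n = \bigcup_i D_{i,n}$ as constructible subsets of the finite-type scheme $\Xi_n$, and a constructible set that is a countable increasing union of constructible subsets (here $\bigcup_{i \le k} D_{i,n}$) must equal one of them: this is because a Noetherian scheme has no strictly increasing infinite chain of closed subsets, applied to the Zariski closures, combined with the fact that constructible sets in a Noetherian space satisfy the ascending chain condition up to taking closures and complements — more precisely, one uses that $\Xi_n$ is Noetherian so there is $k(n)$ with $D_n = \bigcup_{i \le k(n)} D_{i,n}$.

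The remaining, and main, obstacle is to promote this "stabilization at each finite level $n$" to a single finite $K$ working at the level of $\MS$ itself; a priori $k(n)$ could grow with $n$. Here is where properness/quasi-compactness of the fibers of $\pi_n$ enters, in the spirit of \cite{DL99b}: one shows that for $n$ large enough (depending only on $n(D)$ and the finitely many relevant data), $D = \pi_n^{-1}(\pi_n(D))$ and likewise $D_i = \pi_{n}^{-1}(\pi_n(D_i))$ once $n \ge n(D_i)$, so that $\bigcup_{i\le k(n)} D_i = \pi_n^{-1}\big(\bigcup_{i \le k(n)} D_{i,n}\big) = \pi_n^{-1}(D_n) = D$; taking $K = \{1,\dots,k(n)\}$ for any such $n \ge \max(n(D), n(D_1),\dots)$ — but we only know finitely many $D_i$ are "needed" at level $n$, and we must check $n(D_i) \le n$ for those $i$, which we arrange by first enlarging $n$ past $n(D_i)$ for $i \le k(n_0)$ and iterating; the iteration terminates precisely by the Noetherian ACABdd argument above applied once more. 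I expect the delicate point to be this bootstrapping between the index $n$ of the cylinder and the number $k(n)$ of sets needed, and the clean way to handle it is exactly the lemma of Denef--Loeser: the countable union being a cylinder forces, by quasi-compactness of $\plim$ of the non-empty affine fibers, that already finitely many $D_i$ cover $D$.
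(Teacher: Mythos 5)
Your part (1) is essentially what the paper leaves to the reader: pass to a common level $m \ge n(D_1),\dots,n(D_r)$, use that $\pi_m(\MS)=C_m(N,p)$ is constructible (\remref{projcons}), and apply the Boolean algebra structure of constructible subsets of the finite-type scheme $\Xi_m$ together with the fact that $\pi_m^{-1}$ commutes with unions, intersections and relative complements. That part is fine.

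For part (2) there is a genuine gap in the main line of your argument, and you half-acknowledge it yourself. The level-wise statement (for each fixed $n$, finitely many of the $\pi_n(D_i)$ already cover $\pi_n(D)$) produces a finite index set $K(n)$ depending on $n$, and to descend back to $\MS$ you need $n \ge n(D_i)$ for every $i \in K(n)$; enlarging $n$ to arrange this changes $K(n)$, and the proposed termination ``by the Noetherian argument applied once more'' has no decreasing invariant --- nothing prevents $\max_{i\in K(n)} n(D_i)$ from growing without bound as $n$ grows, so the iteration need not stop. The compactness you invoke in your final sentence is not a clean-up device for this bootstrap; it is the entire proof, and the paper organizes the argument so that the bootstrap never arises. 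Namely, set $Z_k = D\setminus(D_1\cup\dots\cup D_k)$; by part (1) these are cylinders, they decrease, and $\bigcap_{k}Z_k=\emptyset$. If every $Z_k$ were nonempty, then choosing levels $n_k \ge n(Z_k)$ the projections $T_{n_k}=\pi_{n_k}(Z_k)$ form a projective system of nonempty constructible sets with $T_{n_{k+1}}\subset a_{n_{k+1},n_k}^{-1}(T_{n_k})$, whose inverse limit is nonempty by \cite{EGA-IV}, Proposition 8.3.3; this yields a point of $\bigcap_k Z_k$, a contradiction. Hence $Z_{i_0}=\emptyset$ for some $i_0$ and $K=\{1,\dots,i_0\}$ works. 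You should lead with this decreasing-sequence compactness argument rather than append it as a fallback; as written, your proof does not close.
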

\begin{proof}
$(1)$ Since $\pi_n(\MS)=C_n(N,p)$ are constructible sets it is easy to prove $(1)$, \remref{projcons}.
$(2)$ Let us consider the cylindrical sets $Z_n=D
\setminus (D_1 \cup \dots \cup D_n)$, $n\ge 0$. Since $Z_1 \supset
Z_2 \supset \dots$ we have that the claim  is equivalent to
$Z_{i_0} = \emptyset$ for some index $i_0$.
We can assume that  there exists  an increasing set of integers
$\{n_i\}_{i \ge 1}$, $n_i \ge n(Z_i)$, such  that
 $\pi_{n_i}(Z_i)=T_{n_i}$, with $T_{n_i}$ a constructible set of
 $\Xi_{n_i}$,
 and $T_{n_{i+1}} \subset a^{-1}_{n_{i+1},n_i}(T_{n_i})$, $i\ge 1$.
Since $\cap_{i \ge 1} Z_i = \emptyset$ from \cite{EGA-IV},
Proposition 8.3.3,  we have  that $Z_{i_0} = \emptyset$ for some
index $i_0$, and then we get  $(2)$.
\end{proof}

\medskip
We denote by $K_0({\bf Sch})$ the Grothendieck ring of ${\bf Sch}$.
Let $ \mathbb L =[\res]$ be the coset of $\res$ in $ K_0({\bf
Sch})$, we set ${\mathcal M}=K_0({\bf Sch})[\, \mathbb L^{-1}]$. Let
$\mathcal F =\{F^n \mathcal M\}_{n \in \mathbb Z}$ be Kontsevich's
filtration of $\mathcal M$: $F^n \mathcal M$ is the sub-group of
$\mathcal M$ generated by the elements of  the form $[V] \mathbb L
^{-i}$ for $i- dim(V) \ge n$. We denote by $\widehat{ \mathcal M}$
the completion of $\mathcal M$ with respect the filtration $\mathcal
F$.

Let $C$ be a cylinder of $\MS$, we say that $C$ is $c$-stable at
level $n \ge n(C)$ if
$$[\pi_{n+1}(C)]=[\pi_{n}(C)] \; {\mathbb L}^c \in K_0({\bf Sch}),$$
$C$ is
$c$-stable if there exists an integer $n_0 \ge n(C)$ such that $C$
is $c$-stable at level $n$ for all $n\ge n_0$. We denote by
$sn(C)$ the least integer $n_0$ verifying such a condition.

A cylinder $C$ is called $c$-trivial if
there exist an integer $n_0 \ge n(C)$ such that for all $n\ge n_0$
the  morphism $a_{n+1}:\pi_{n+1}(C)\longrightarrow \pi_{n}(C)$
 is a piecewise trivial fibration with fiber $F\cong \res^{c}$.
We denote by $tn(C)$ the least integer $n_0$ verifying such a
condition. Notice that $c$-trivial implies $c$-stable.

\medskip
\begin{proposition}
\label{stable} Let $D$ be a $c$-trivial cylinder of $\MS$ and let
 $C$ be a cylinder.
 Then $D\cap C$ is $c$-trivial and
 $$
\mu_p(C,D):=[\pi_n(C) \cap \pi_n(D)] \; {\mathbb L}^{-c (n+1)} \in
\mathcal M
$$
does not depend on $n$, provided $n \ge tn(D), n(C)$.
\end{proposition}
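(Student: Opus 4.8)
The plan is to reduce the whole statement to two ingredients: first, the identity $\pi_n(D\cap C)=\pi_n(D)\cap\pi_n(C)$ for $n$ large; and second, the elementary fact that restricting a piecewise trivial fibration to the inverse image of a constructible subset of its base yields again a piecewise trivial fibration with the \emph{same} fibre. Granting these, both conclusions fall out by bookkeeping in $K_0({\bf Sch})$.

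\textbf{Cylinder preliminaries.} First I would record the standard facts about cylinders. By \propref{cylinder}(1), $D\cap C$ is again a cylinder. Next, for every $n\ge n(D)$ one has $D=\pi_n^{-1}(\pi_n(D))$: the inclusion $\supseteq$ is trivial, and $\subseteq$ holds because the transition map $a_{n,n(D)}$ carries $\pi_n(D)$ into $\pi_{n(D)}(D)$, so a point with $n$-th projection in $\pi_n(D)$ already has $n(D)$-th projection in $\pi_{n(D)}(D)$; the same holds for $C$. Hence for $n\ge\max(n(D),n(C))$ we get $D\cap C=\pi_n^{-1}\bigl(\pi_n(D)\cap\pi_n(C)\bigr)$, and applying $\pi_n$ together with the surjectivity of $\pi_n\colon\MS\to C_n(N,p)$ (see \propref{monster} and the subsequent remark) yields
$$\pi_n(D\cap C)=\pi_n(D)\cap\pi_n(C),$$
which is in particular constructible, and shows $n(D\cap C)\le\max(n(D),n(C))$. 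Running the same argument one level higher and using $a_{n+1}\circ\pi_{n+1}=\pi_n$, I obtain the key transition identity $\pi_{n+1}(C)=a_{n+1}^{-1}(\pi_n(C))$ inside $C_{n+1}(N,p)$ for $n\ge n(C)$. Intersecting this with $\pi_{n+1}(D)$ and using $\pi_{n+1}(D)\subseteq a_{n+1}^{-1}(\pi_n(D))$ gives
$$\pi_{n+1}(D\cap C)=a_{n+1}^{-1}\bigl(\pi_n(D\cap C)\bigr)\cap\pi_{n+1}(D),$$
i.e.\ $\pi_{n+1}(D\cap C)$ is exactly the $a_{n+1}$-preimage of $\pi_n(D\cap C)\subset\pi_n(D)$ under the map $a_{n+1}\colon\pi_{n+1}(D)\to\pi_n(D)$.

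\textbf{$c$-triviality of $D\cap C$ and the value $\mu_p(C,D)$.} Now fix $n\ge\max(tn(D),n(C))$. Since $n\ge tn(D)$, the morphism $a_{n+1}\colon\pi_{n+1}(D)\to\pi_n(D)$ is a piecewise trivial fibration with fibre $\res^{c}$; choose a finite partition $\{S_i\}$ of $\pi_n(D)$ into locally closed subsets with $a_{n+1}^{-1}(S_i)\cong S_i\times\res^{c}$. Writing the constructible set $\pi_n(D\cap C)$ as a finite disjoint union of locally closed pieces and refining it against $\{S_i\}$, the displayed preimage identity shows that $a_{n+1}\colon\pi_{n+1}(D\cap C)\to\pi_n(D\cap C)$ is a piecewise trivial fibration with fibre $\res^{c}$. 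Hence $D\cap C$ is $c$-trivial, with $tn(D\cap C)\le\max(tn(D),n(C))$. For the second assertion, $c$-trivial implies $c$-stable (as noted just before the statement), and summing $[\,a_{n+1}^{-1}(S_i\cap\pi_n(D\cap C))\,]=[S_i\cap\pi_n(D\cap C)]\,\mathbb L^{c}$ over the pieces gives $[\pi_{n+1}(D\cap C)]=[\pi_n(D\cap C)]\,\mathbb L^{c}$ in $K_0({\bf Sch})$ for all $n\ge\max(tn(D),n(C))$. Substituting $\pi_n(D\cap C)=\pi_n(C)\cap\pi_n(D)$ and multiplying by $\mathbb L^{-c(n+2)}$ yields $[\pi_{n+1}(C)\cap\pi_{n+1}(D)]\,\mathbb L^{-c(n+2)}=[\pi_n(C)\cap\pi_n(D)]\,\mathbb L^{-c(n+1)}$, so $\mu_p(C,D)$ is independent of $n$ in the asserted range, and lies in $\mathcal M$.

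\textbf{Main obstacle.} The only delicate point is the bookkeeping in the second step: one must make sure the projections of $D$, $C$ and $D\cap C$ interact with the transition maps $a_{n+1}$ exactly as claimed, and in particular that $\pi_{n+1}(D\cap C)$ is the \emph{full} $a_{n+1}$-preimage of $\pi_n(D\cap C)$ when that preimage is taken \emph{inside} $\pi_{n+1}(D)$ rather than inside all of $C_{n+1}(N,p)$. Once that identity is secured, the reduction to a piecewise trivial fibration and the resulting $K_0$-computation are routine.
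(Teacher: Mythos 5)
Your proof is correct and follows essentially the same route as the paper: the paper's own argument is just the final displayed identity $[\pi_{n+1}(C)\cap\pi_{n+1}(D)]=[\pi_n(C)\cap\pi_n(D)]\,\mathbb L^{c}$, asserted to follow ``from the definition of $c$-trivial cylinder,'' and your preliminary steps (the identities $\pi_n(D\cap C)=\pi_n(D)\cap\pi_n(C)$ and $\pi_{n+1}(D\cap C)=a_{n+1}^{-1}(\pi_n(D\cap C))\cap\pi_{n+1}(D)$, plus the restriction of the piecewise trivial fibration) simply make explicit what the paper leaves implicit. You also verify the $c$-triviality of $D\cap C$, which the paper's proof does not address at all, so your write-up is if anything more complete.
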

\begin{proof}
\noindent From the  definition of $c$-trivial cylinder  we get
that
$$
 [\pi_{n+1}(C) \cap \pi_{n+1}(D)] \; {\mathbb L}^{-c (n+2)}
=[\pi_n(C) \cap \pi_n(D)] \;{\mathbb L}^{c}  {\mathbb L}^{-c(n+2)}
=[\pi_n(C) \cap \pi_n(D)] \; {\mathbb L}^{-c (n+1)}.
 $$
\noindent Hence the class  $[\pi_n(C) \cap \pi_n(D)] \; {\mathbb
L}^{-c (n+1)}$ is independent of $n$, provided $n \ge tn(D),
n(C)$.
\end{proof}

\medskip
Let us consider the  cylinder $\MS^{ci}=\pi_n^{-1}(\Sigma_{ci}^n)$,
$n\ge \delta(N,e_0)$, \propref{fibration} $(3)$. By the last
proposition we may define
 on $\mathcal C_{N,p}$ a finitely  additive
measure valued  in ${\mathcal M}$
$$
  \begin{array}{rlc}
  \mu_{p}: {\mathcal C}_{N,p} & \longrightarrow & {\mathcal M} \\
                 C    & \longrightarrow &
               \mu_p(C,\MS^{ci})= [\pi_n(C) \cap \Sigma_{ci}^n] \, \mathbb L ^{-(n+1)(N-1)e_0}
  \end{array}
  $$

\noindent for a big enough $n$.

\medskip
\begin{proposition}
\label{measure} There exists a  finitely  additive
$\widehat{\mathcal M}$-valued measure defined by
$$
\begin{array}{cccc}
    \mu_{p}: &  \mathcal C _{N, p} & \longrightarrow &
    \widehat{\mathcal M}\\
 &  C & \mapsto &\lim_{ i\rightarrow \infty} \mu_p(C,\MS^{ci})
 \end{array}
$$
\end{proposition}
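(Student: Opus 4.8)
The plan is to upgrade the finitely additive $\mathcal M$-valued measure $\mu_p(\cdot,\MS^{ci})$ already constructed before the statement into an $\widehat{\mathcal M}$-valued measure by a limit procedure over the exhaustion $\MS^{ci}=\bigcup_i (\text{jet/truncation levels})$, mimicking the standard construction of the motivic measure on jet schemes from \cite{Bat98} and \cite{DL99b}. The point is that $\MS^{ci}$ is only an \emph{approximation} to all of $\MS$ (it sees only complete-intersection curve singularities), so one needs a sequence of $c$-trivial cylinders whose "complement" shrinks in the Kontsevich filtration, and then one sums up the contributions. Concretely, I would produce an increasing sequence of $c_i$-trivial cylinders $\MS^{(i)}$ with $\bigcup_i \MS^{(i)}=\MS$ up to something of arbitrarily negative dimension, using \propref{fibration}: for each multiplicity stratum $\Sigma^n_\nu$ one gets a piecewise trivial fibration $a_n:\Sigma^n_\nu\to\Sigma^{n-1}_\nu$ with fiber an affine space whose dimension is computable from $\nu$ and $e_0$, exactly as in part $(3)$ for the complete intersection case.

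\medskip
\noindent The key steps, in order: \textbf{(i)} For a cylinder $C$ and each $i$ set $\mu_p^{(i)}(C):=\mu_p(C,\MS^{(i)})=[\pi_n(C)\cap\pi_n(\MS^{(i)})]\,\mathbb L^{-c_i(n+1)}\in\mathcal M$ for $n$ large, which is well defined by \propref{stable}. \textbf{(ii)} Show that the images of $\mu_p^{(i)}(C)$ in $\mathcal M$, viewed via the injection $\mathcal M\hookrightarrow\widehat{\mathcal M}$, form a Cauchy sequence for the filtration $\mathcal F=\{F^n\mathcal M\}$: the difference $\mu_p^{(i+1)}(C)-\mu_p^{(i)}(C)$ is represented by the class of $\pi_n(C)\cap(\pi_n(\MS^{(i+1)})\setminus\pi_n(\MS^{(i)}))$ times a negative power of $\mathbb L$, and by the control on the degree of syzygies coming from \thmref{big-lifting} / \propref{jotatilde}, together with \propref{iarrobino}-type dimension bounds, this class lies in $F^{n_i}\mathcal M$ with $n_i\to\infty$. \textbf{(iii)} Define $\mu_p(C):=\lim_{i\to\infty}\mu_p^{(i)}(C)\in\widehat{\mathcal M}$; this exists by completeness. \textbf{(iv)} Check finite additivity: if $C=C_1\sqcup C_2$ then $\pi_n(C)\cap\pi_n(\MS^{(i)})$ decomposes accordingly (constructible sets, disjoint), so $\mu_p^{(i)}$ is additive for each $i$, and additivity passes to the limit since addition is continuous in $\widehat{\mathcal M}$; use \propref{cylinder} to know $\mathcal C_{N,p}$ is a Boolean algebra so this makes sense. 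One should also record independence of the choice of exhausting sequence $\{\MS^{(i)}\}$ — any two interleave, so the limits agree.

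\medskip
\noindent \textbf{The main obstacle} I expect is step (ii): proving that the "missing" part $\pi_n(\MS)\setminus\pi_n(\MS^{(i)})$ — the $n$-truncations of curve singularities whose ideal needs more than the $i$-th batch of generators — has dimension growing strictly slower in $n$ than the normalizing affine-fiber dimension $c_i(n+1)$, so that after the twist by $\mathbb L^{-c_i(n+1)}$ it sinks to $-\infty$ in the filtration. This is where the uniform bounds of \propref{jotatilde}(3) (degree of $Syz_1(\tilde J)$ bounded by $\delta(N,e_0)$, independent of $n$) and the finiteness in \propref{betabeta} are essential: the number of generators $\nu$ of $I(C)$ is bounded by a constant depending only on $N,e_0$, so the stratification $\MS=\bigcup_\nu \pi_n^{-1}(\Sigma^n_\nu)$ is \emph{finite}, and one can take $\MS^{(i)}$ to already exhaust $\MS$ for $i$ large; then the sequence $\mu_p^{(i)}(C)$ is eventually constant and the limit is trivially attained. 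If that finiteness can be arranged cleanly the limit in the statement is essentially a formality, and the content is really in organizing the strata and invoking \propref{stable} on each. I would present it in the "eventually constant, hence convergent" form, falling back on the genuine Cauchy-sequence argument only if the stratification does not literally exhaust $\MS$ after finitely many steps.
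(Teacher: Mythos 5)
Your step (iv) together with your closing fallback remark is, in essence, the whole of the paper's proof: the value $\mu_p(C,\MS^{ci})=[\pi_n(C)\cap\Sigma^n_{ci}]\,\mathbb L^{-(n+1)(N-1)e_0}$ is already well defined (independent of $n$ for $n$ large) by \propref{stable} applied to the single $c$-trivial cylinder $\MS^{ci}$ with $c=(N-1)e_0$, so the limit in the statement is attained by an eventually constant sequence, and the only thing the paper actually verifies is finite additivity, which reduces to two disjoint cylinders and follows from $[A\sqcup B]=[A]+[B]$ in $K_0({\bf Sch})$. The ``eventually constant, hence convergent'' presentation you mention at the very end is therefore the correct one.

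Your primary plan, however, rests on a misreading of the construction and would not go through with the tools available in the paper. The measure is defined \emph{only} through the complete-intersection stratum $\Sigma^n_{ci}$; there is no exhaustion of $\MS$ by $c_i$-trivial cylinders $\MS^{(i)}$, and no claim that the complement of the complete-intersection locus becomes negligible in the Kontsevich filtration. Two concrete obstructions to your plan: first, \propref{fibration}~$(3)$ establishes the piecewise trivial fibration structure of $a_n\colon\Sigma^n_\nu\to\Sigma^{n-1}_\nu$ only for $\nu=N-1$ --- the complete-intersection hypothesis is used precisely so that a degree-$n$ perturbation of the $\nu$ generators still defines a curve singularity ($\dim(R/J_\varepsilon)\le 1$ together with $\nu=N-1$ forces equality), and for other $\nu$ the fibers of $a_n$ over $\Sigma^{n-1}_\nu$ need not be affine spaces of constant dimension; second, even granting $c_\nu$-triviality of every stratum, the normalizing exponents $c_\nu$ vary with $\nu$, so the contributions of different strata cannot be assembled into a single expression of the form $[\pi_n(C)\cap\pi_n(\MS^{(i)})]\,\mathbb L^{-c_i(n+1)}$, and your Cauchy estimate in step (ii) has no support: \propref{jotatilde} and \propref{iarrobino} bound degrees of syzygies and cohomological dimension, not the codimension of the non-complete-intersection locus inside $\pi_n(\MS)$. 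The index $i$ in the statement is just the truncation level hidden in the definition of $\mu_p(\cdot,\MS^{ci})$, not a stratification index.
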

\begin{proof}
Let $C_1,\dots,C_r$ be a disjoint family of cylinders, then we
have to prove
$$
\mu_{p}(C_1\cup \dots \cup C_r)=\Sigma_{i=1}^{r}\mu_p(C_i).
$$
\noindent Notice that we only need to prove the equality for
$r=2$,  this case follows from the group structure of $K_0({\bf
Sch})$.
\end{proof}

In the next proposition  we prove some auxiliary results that we
will use later on. Since $\widehat{ \mathcal M}$ is complete we may
consider the norm $\|\cdot\|: \widehat{ \mathcal M} \longrightarrow
\mathbb R ^+ $, with $\|a\|=2^{-n}$ where $n$ is the order of $a$
with respect the filtration  $\mathcal F$ of $\mathcal M$.

\medskip
\begin{proposition}
\label{arquim} $(1)$ If $D_1$, $D_2$ are cylinders of $\MS$
such that $D_1 \subset D_2$ then
$$
\|\mu_{p}(D_1)\| \le \|\mu_{p}(D_2)\|.
$$

\noindent $(2)$ If $D_1,\dots ,D_r$ are cylinders of $\mathcal H$
then
$$
\|\mu_{p}(D_1 \cup \dots \cup D_r)\| \le Max\{
\|\mu_{p}(D_i)\|, i=1,\dots, r\}.
$$
\end{proposition}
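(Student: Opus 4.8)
The plan is to reduce both assertions to the explicit formula for $\mu_p$ on cylinders coming from \propref{stable} and \propref{measure}, together with the behaviour of Kontsevich's filtration on classes of constructible sets. First I would fix a single integer $n$ large enough to serve for all the finitely many cylinders in the statement at once, namely $n \ge tn(\MS^{ci})$ and $n \ge n(D_i)$ for every $i$; set $c = (N-1)e_0$. Since $\MS^{ci}$ is $c$-trivial by \propref{fibration}~$(3)$, \propref{stable} applied with $D = \MS^{ci}$ gives, for each cylinder $D$ under consideration, $\mu_p(D) = [\,\pi_n(D)\cap\Sigma_{ci}^n\,]\,\mathbb L^{-(n+1)c}$, an element of $\mathcal M$ represented by the honest constructible set $W_D := \pi_n(D)\cap\Sigma_{ci}^n$ inside the finite type scheme $\Sigma_{ci}^n$.

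Second, I would record that for a constructible set $W$ the order of $[W]\,\mathbb L^{-j}$ with respect to the filtration $\mathcal F$ equals $j-\dim W$, so that $\|\mu_p(D)\| = 2^{\dim W_D - (n+1)c}$ (with the convention that this is $0$ when $W_D = \emptyset$). Granting this, $(1)$ and $(2)$ are purely formal. For $(1)$: $D_1\subset D_2$ gives $\pi_n(D_1)\subseteq\pi_n(D_2)$, hence $W_{D_1}\subseteq W_{D_2}$, hence $\dim W_{D_1}\le\dim W_{D_2}$, hence $\|\mu_p(D_1)\|\le\|\mu_p(D_2)\|$. For $(2)$: since $\pi_n$ is a set map, $W_{D_1\cup\cdots\cup D_r} = \bigcup_i W_{D_i}$, whose dimension is $\max_i\dim W_{D_i}$, and therefore $\|\mu_p(D_1\cup\cdots\cup D_r)\| = 2^{\max_i\dim W_{D_i} - (n+1)c} = \max_i\|\mu_p(D_i)\|$ --- in fact an equality, which is slightly stronger than what is claimed.

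The only step that is not bookkeeping, and the one I expect to be the real obstacle, is the identity $\|[W]\,\mathbb L^{-j}\| = 2^{\dim W - j}$: the inequality $\|[W]\,\mathbb L^{-j}\| \le 2^{\dim W - j}$ (equivalently, order $\ge j-\dim W$) is immediate from the definition of $F^n\mathcal M$, but the reverse inequality says that the class of a $d$-dimensional constructible set does not lie in the subgroup of $\mathcal M$ generated by classes of virtual dimension $<d$, i.e. that the virtual-dimension function is well defined on $\mathcal M$ (and extends to $\widehat{\mathcal M}$). This I would not reprove: it is the same foundational input that makes the measure $\mu_p$ well defined in the first place, and I would cite it from the motivic-integration references used in the paper (\cite{DL99b}, \cite{Loo02}). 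It is worth flagging because without it only the ultrametric bound $\|a+b\|\le\max(\|a\|,\|b\|)$ is available, and for $(1)$ that inequality points the wrong way: writing $D_2 = D_1\sqcup(D_2\setminus D_1)$ (a cylinder by \propref{cylinder}) and using finite additivity gives $\|\mu_p(D_2)\|\le\max(\|\mu_p(D_1)\|,\|\mu_p(D_2\setminus D_1)\|)$, not the needed comparison.
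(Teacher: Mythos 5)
Your argument is essentially the paper's own proof: both reduce the claims to comparing $\dim\bigl(\pi_n(D)\cap\Sigma_{ci}^n\bigr)$ at a fixed large level $n$, using $\dim(W_1\cup W_2)=\mathrm{Max}\{\dim W_1,\dim W_2\}$ for constructible sets and the identification $\|[W]\,\mathbb L^{-j}\|=2^{\dim W-j}$. You are right to flag that this last identity (exactness of the order, not just the lower bound from the definition of $F^n\mathcal M$) is the one nontrivial input; the paper uses it silently in writing $\|\mu_p(D_i)\|=\lim_n 2^{-r_i(n)}$, so citing it from the motivic-integration references is an improvement rather than a deviation.
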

\begin{proof}
$(1)$ We may assume that $D_i=\pi^{-1}_n(T_i)$, $i=1,2$, where $T_1
\subset T_2 \subset \pi_n(\MS)$ are constructible sets, $n\gg 0$. If
we set  $r_i(n)=c(n+1)-dim(T_i\cap \pi_n(\MS))$, $i=1,2$, then it is
easy to prove $r_1(n) \le r_2(n)$ for all $n \gg 0$. Hence we deduce
$$ \|\mu_{p}(D_1)\| = \lim_n 2^{-r_1(n)} \le \lim_n
2^{-r_2(n)}=\|\mu_{p}(D_2)\|. $$

\noindent $(2)$ By induction on $r$ it is enough to prove the result
for $r=2$. Let $d_i=dim(T_i\cap \pi_n(\MS))$, $i=1,2$, then we have
$$ c(n+1)- dim((T_1 \cup T_2)\cap \pi_n(\MS))
 \ge c(n+1)-Max\{d_1, d_2\}.$$ From this it is easy to get the claim.
\end{proof}

\medskip
A subset $C$ of $\MS$ is  measurable with respect $\mu_{p}$
 if and only if for every real number
$\epsilon \in \mathbb R^+$ there exists a sequence of cylindrical
sets $\{C_i\}_{i \ge 0}$ such that $C \vee C_0 \subset
\bigcup_{i\ge 1} C_i$ and $\|\mu_p(C_i)\| \le \epsilon$ for all $i
\ge 1$, where $C \vee C_0$ stands for the disjoint union $C \vee
C_0= C \cup C_0 \setminus C \cap C_0$. We say that $C$ is strongly
measurable if $C_0 \subset C$. See \cite{Bat98}, \cite{DL99c}
Appendix A.

\medskip
\begin{proposition}
\label{DFA} $(1)$ Any cylindrical set is strongly measurable. The
collection of measurable set form a finite algebra of sets
$\mathcal C^*_{N, p}$.

\noindent $(2)$ If $C$ is a measurable set of $\MS$ then $
\mu_{p}(C):= \lim_{\epsilon \rightarrow 0}\mu_p(C_i)\ $ exists
in $\widehat{\mathcal M}$ and is independent of the choice of the
sequence $\{C_i\}_{i \ge 0}$. Then there exists a finite measure
$$
\begin{array}{llll}
 \mu_{p}:  & \mathcal C^*_{N, p} &\longrightarrow &\widehat{\mathcal M} \\
    & C &  \mapsto& \mu_{p}(C)
  \end{array}
$$

\noindent $(3)$ Let $\{C_i\}_{i \ge 0}$ be  a sequence of
measurable sets.

\noindent $(3.1)$ If $\lim_{i \rightarrow \infty}\|\mu_{p}(C_i)\|=0$
then $\cup_{i\ge 0} C_i$ is measurable.

\noindent $(3.2)$ If the sequence of measurable sets are mutually
disjoint and $C=\cup_{i\ge 0} C_i$ is measurable then $\sum_{i \ge
0}\mu_{p}(C_i)$ converges in $\widehat{\mathcal M}$ to
$\mu_{p}(C)$.
\end{proposition}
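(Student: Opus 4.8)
The plan is to run the standard construction by which a finitely additive $\widehat{\mathcal M}$-valued measure on an algebra of cylinders enjoying a finite-subcover (``quasi-compactness'') property extends to the induced algebra of measurable sets, as carried out by Batyrev and by Denef--Loeser for jet schemes (\cite{Bat98}, \cite{DL99b}, \cite{DL99c}); the indispensable inputs are \propref{cylinder} (cylinders of $\MS$ form a Boolean algebra and every countable cover of a cylinder by cylinders has a finite subcover), \propref{measure} ($\mu_p$ is finitely additive on cylinders), \propref{arquim} (monotonicity of $\|\cdot\|$ under inclusion and the ultrametric inequality for finite unions), and the completeness of $\widehat{\mathcal M}$. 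For $(1)$, a cylinder $C$ is strongly measurable: take $C_0=C$ and $C_i=\emptyset$ for $i\ge 1$, so $C\vee C_0=\emptyset$ and $\|\mu_p(C_i)\|=0$. That the measurable sets form an algebra follows from the set-theoretic inclusions $(\MS\setminus C)\vee(\MS\setminus C_0)=C\vee C_0$ and $(C\cup D)\vee(C_0\cup D_0)\subset(C\vee C_0)\cup(D\vee D_0)$: since $\MS$ is itself a cylinder (\rmkref{projcons}, \propref{eles}), $\MS\setminus C_0$ is a cylinder and witnesses the measurability of $\MS\setminus C$ together with the covering family of $C$; interleaving the covering families of $C$ and $D$ and using the cylinder $C_0\cup D_0$ witnesses the measurability of $C\cup D$; closure under intersection is then De Morgan.

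For $(2)$, for each $\epsilon>0$ choose a witnessing sequence for $C$ with modifying cylinder $C_0^{\epsilon}$ and put $\mu_p(C)=\lim_{\epsilon\to0}\mu_p(C_0^{\epsilon})$. The crucial point is that for $\epsilon'\le\epsilon$ the set $C_0^{\epsilon}\vee C_0^{\epsilon'}$ is again a cylinder and, by the triangle inequality for $\vee$, is contained in the union of the two covering families, hence in a countable union of cylinders of norm $\le\epsilon$; intersecting these with $C_0^{\epsilon}\vee C_0^{\epsilon'}$ and invoking \propref{cylinder}$(2)$ we cover it by finitely many such, so $\|\mu_p(C_0^{\epsilon}\vee C_0^{\epsilon'})\|\le\epsilon$ by \propref{arquim}. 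Writing $\mu_p(C_0^{\epsilon})-\mu_p(C_0^{\epsilon'})$ as the difference of the $\mu_p$-values of the two halves of this symmetric difference (additivity, \propref{measure}) and using monotonicity of $\|\cdot\|$ and the ultrametric inequality in $\widehat{\mathcal M}$, one gets $\|\mu_p(C_0^{\epsilon})-\mu_p(C_0^{\epsilon'})\|\le\epsilon$, so the net $(\mu_p(C_0^{\epsilon}))_{\epsilon}$ is Cauchy and converges in the complete space $\widehat{\mathcal M}$. Repeating the comparison with a second witness shows the limit is independent of all choices; and for disjoint measurable $C,D$ the inclusion $C_0^{\epsilon}\cap D_0^{\epsilon}\subset(C_0^{\epsilon}\vee C)\cup(D_0^{\epsilon}\vee D)$ together with \propref{cylinder}$(2)$ and \propref{arquim} shows $\|\mu_p(C_0^{\epsilon}\cap D_0^{\epsilon})\|\le\epsilon$, so letting $\epsilon\to 0$ in $\mu_p(C_0^{\epsilon}\cup D_0^{\epsilon})=\mu_p(C_0^{\epsilon})+\mu_p(D_0^{\epsilon})-\mu_p(C_0^{\epsilon}\cap D_0^{\epsilon})$ gives $\mu_p(C\cup D)=\mu_p(C)+\mu_p(D)$.

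For $(3)$, replacing $C_i$ by $C_i\setminus(C_0\cup\cdots\cup C_{i-1})$ (still measurable by $(1)$, with non-increasing norm by \propref{arquim}$(1)$) reduces both parts to the case of pairwise disjoint $C_i$. For $(3.1)$, fix $\epsilon$; since $\|\mu_p(C_i)\|\to 0$ only finitely many indices $i\le m$ have $\|\mu_p(C_i)\|>\epsilon$, and one splits $C=\bigcup_i C_i$ into the measurable finite union $A=C_0\cup\cdots\cup C_m$ and the tail $B=\bigcup_{i>m}C_i$; an $\epsilon$-witness of $A$ together with, for each $i>m$, an $\epsilon$-witness of $C_i$ chosen at a level small enough that its modifying cylinder has $\mu_p$-norm $\le\epsilon$ (possible by $(2)$, since $\|\mu_p(C_i)\|\le\epsilon$), all of these cylinders collected into one countable family of norm $\le\epsilon$ and the modifying cylinder of $A$ kept as the global modifier, witnesses the measurability of $C$. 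For $(3.2)$, one first proves $\|\mu_p(C_i)\|\to0$ by a quasi-compactness argument: a cylinder approximating $C$ from inside is covered by the cylinders obtained from inner approximations of the $C_i$ together with small cylinders; extracting a finite subcover via \propref{cylinder}$(2)$ and using that the $C_i$ are disjoint forces all but finitely many inner cylinders $G_i\subset C_i$ to lie in a fixed finite union of norm-$\le\epsilon$ cylinders, whence $\|\mu_p(C_i)\|\le\epsilon$ for those $i$; the series $\sum_i\mu_p(C_i)$ then converges in $\widehat{\mathcal M}$, and comparing its partial sums to $\mu_p(C)$ through the approximations of $(2)$ and additivity identifies the sum with $\mu_p(C)$.

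The genuine difficulty is concentrated in part $(3)$: the norm bookkeeping in $(1)$ and $(2)$ is routine once the correct $\vee$-inclusions are written down, but in $(3)$ — above all in proving that the $\mu_p$-measures of the members of a countable disjoint measurable partition tend to zero — one must actually pass from a countable cover of a cylinder to a finite one, which is exactly where the quasi-compactness of \propref{cylinder}$(2)$ is essential, and arranging the diagonalization of the $\epsilon$-witnesses of the individual $C_i$ so that it is compatible with the ultrametric estimates of \propref{arquim} is the delicate point; here one follows closely the treatment of the jet-scheme case in \cite{Bat98} and \cite{DL99c}.
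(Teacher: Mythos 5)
Your proposal is correct and follows essentially the same route as the paper: the paper declares $(1)$ and $(3)$ standard and only details $(2)$, where its argument is exactly your comparison of two witnessing sequences via the inclusion $C_0\vee C_0'\subset\bigcup_i C_i\cup\bigcup_i C_i'$, the finite-subcover property of \propref{cylinder}$(2)$, and the ultrametric bounds of \propref{arquim}. Your fleshing out of the ``standard'' parts $(1)$ and $(3)$ along the lines of \cite{Bat98} and \cite{DL99c} is consistent with what the paper intends, and you even correct a small slip in the paper (which cites \propref{cylinder}$(1)$ where $(2)$ is the relevant quasi-compactness statement).
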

\begin{proof}
The proof of $(1)$ and  $(3)$ are standard. $(2)$ Let $\epsilon$,
$\epsilon'$ be positive real numbers, and let $\{C_i\}_{i \ge 0}$,
$\{C_i'\}_{i \ge 0}$ sequences of cylindrical sets of $\MS$
verifying the conditions of measurable set. Then we have $ C_0
\vee C_0' \subset \bigcup_{i\ge 1} C_i \cup \bigcup_{i\ge 1} C_i',
$ from \propref{cylinder} $(1)$ there exist integers $r$, $r'$
such that $ C_0 \vee C_0' \subset \bigcup_{i= 1}^{r} C_i \cup
\bigcup_{i=1}^{r'} C_i'. $ \noindent From this and
\propref{arquim} $(2)$ we get $\|\mu_{p}(C_0 \vee C_0')\|
\le Max\{\epsilon, \epsilon'\}$. Notice that $\mu_{p}(C_0)-  \mu_{p}( C_0') = \mu_{p}(C_0 \setminus
C_0')$;
 \propref{arquim} $(1)$ and the last inequality
yield $\|\mu_{p}(C_0)-  \mu_{p}( C_0')\| \le
 Max\{\epsilon,
\epsilon'\}$. From this it is easy to get the claim.
\end{proof}

Let $C$ be a measurable set of $\MS$ and $f: C \longrightarrow
\mathbb Z \cup \{\infty\}$, we say that $f$ is exponentially
integrable if the fibers of $f$ are measurable and the motivic
integral
$$
\int_C \mathbb L ^{-f} d \mu_{p} := \sum_{s \ge
0}\mu_{p}(C \cap f^{-1}(s)) \mathbb L ^{-s}
$$

\noindent converges in $\widehat{\mathcal M}$.

Given a  singularity $X \subset \EAN$ of arbitrary dimension, let us consider the
function
$$
  \begin{array}{cccc}
    \gamma_X: & (\MS)_{\mathrm rat} & \longrightarrow & \mathbb N \cup \{\infty\} \\
              &  C   & \longrightarrow & (C \cdot X)
  \end{array}
$$

\noindent
where $(C \cdot X)$ stands for the "false" intersection
multiplicity: $(C \cdot X)=dim_{\res}(R/I(C)+I(X))$.

\medskip
\begin{proposition}
Let $X \subset \EAN$ be a   singularity of algebraic variety.

\noindent
$(i)$
For all $s \in \mathbb N$ the set
$ \gamma_X^{-1}(s)$ is a cylinder,

\noindent
$(ii)$ $\gamma_X$ is exponentially integrable.
\end{proposition}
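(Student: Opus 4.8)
The plan is to prove $(i)$ by reducing $\gamma_X$ to a constructible condition on a finite truncation. Fix a curve singularity $C$ with $h^1_C=p$ and put $D=R/(I(C)+I(X))$, a nonzero complete local $\res$-algebra since both ideals lie in $M$. Its Hilbert--Samuel function $h(n)=\dim_{\res}(R/(I(C)+I(X)+M^n))$ has $h(0)=0$, $h(1)=1$, is non-decreasing, and $h(n+1)=h(n)$ forces $M^nD=0$ by Nakayama, hence $h$ is constant equal to $\dim_{\res}D$ from $n$ on. An elementary induction shows $h$ is strictly increasing until it stabilises, and then $\gamma_X(C)=\dim_{\res}D\le s$ if and only if $h(s+1)\le s$, i.e.
$$
\gamma_X(C)\le s \iff \dim_{\res}(R/(I(C)+I(X)+M^{s+1}))\le s .
$$
Now if $E=(I(C)+M^{n})/M^{n}\subset R_n$ is the subspace describing $[C_n]\in G_n$ and $\bar{\mathfrak a}_n=(I(X)+M^{n})/M^{n}$ is the fixed image of $I(X)$, then $R/(I(C)+I(X)+M^{n})=R_n/(E+\bar{\mathfrak a}_n)$, so the displayed condition, taken at $n=\max(s+1,e_0+1)$, depends only on $[C_n]$. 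The assignment $[R_n/E]\mapsto\dim_{\res}R_n/(E+\bar{\mathfrak a}_n)$ is upper semicontinuous on $G_n$ ($\dim E$ being constant, the locus where it is $\ge k$ is a Schubert-type closed set), so the locus $U^{(n)}_s\subset G_n$ where it is $\le s$ is open. Hence $\{C:\gamma_X(C)\le s\}=\pi_n^{-1}(U^{(n)}_s\cap\ST)$ is a cylinder ($\ST$ is constructible by \propref{eles}$(2)$), and $\gamma_X^{-1}(s)=\{\gamma_X\le s\}\setminus\{\gamma_X\le s-1\}$ is a cylinder too, since cylinders form a Boolean algebra, \propref{cylinder}$(1)$. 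This is $(i)$.

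For $(ii)$, the fibres $\gamma_X^{-1}(s)$, $s\in\mathbb N$, are cylinders by $(i)$, hence strongly measurable by \propref{DFA}$(1)$. For convergence of $\int_{\MS}\mathbb L^{-\gamma_X}\,d\mu_p=\sum_{s\ge 0}\mu_p(\gamma_X^{-1}(s))\,\mathbb L^{-s}$ in the complete ring $\widehat{\mathcal M}$ I would bound the order of each term: for $n$ large, $\mu_p(\gamma_X^{-1}(s))\,\mathbb L^{-s}=[\pi_n(\gamma_X^{-1}(s))\cap\Sigma_{ci}^n]\,\mathbb L^{-(n+1)(N-1)e_0-s}$, and by \propref{fibration}$(3)$ the maps $a_n:\Sigma_{ci}^n\to\Sigma_{ci}^{n-1}$ are piecewise trivial fibrations with fibre $\res^{(N-1)e_0}$, so $\dim\Sigma_{ci}^n=(N-1)e_0\,n+c_0$ with $c_0$ depending only on $N,e_0$. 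Thus this class lies in $F^{c_1+s}\mathcal M$ with $c_1=(N-1)e_0-c_0$, whence $\|\mu_p(\gamma_X^{-1}(s))\,\mathbb L^{-s}\|\le 2^{-(c_1+s)}\to 0$, and the series converges because the norm on $\widehat{\mathcal M}$ is non-archimedean. (If $\MS^{ci}=\emptyset$ then $\mu_p\equiv 0$ and everything is trivial.)

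What is left is to see that $\gamma_X^{-1}(\infty)$ is measurable, necessarily of measure $0$, and this is the step I expect to be the real obstacle. Writing $\gamma_X^{-1}(\infty)=\bigcap_{s\ge 0}E_s$ with $E_s=\{C:\gamma_X(C)>s\}$ a decreasing chain of cylinders, the $\varepsilon$-definition of measurability reduces everything to the tail estimate $\|\mu_p(E_s)\|\to 0$, i.e. to showing that the codimension of $\pi_n(E_s)\cap\Sigma_{ci}^n$ inside $\Sigma_{ci}^n$ tends to $\infty$ with $s$ once $n$ is taken comparable to $s$. The substance here is a dimension bound: imposing $\gamma_X(C)\ge s$ forces $\dim_{\res}(R/(I(C)+I(X)+M^{s}))\ge s$, and through the standard-basis/truncation description underlying \thmref{big-lifting} and the cell structure behind \propref{fibration} this cuts out a number of independent linear conditions on the initial data of $C$ that grows linearly in $s$, whereas each step of the tower $\{\Sigma_{ci}^n,a_n\}$ adds only $(N-1)e_0$ dimensions; taking $n\asymp s$ makes the codimension grow without bound. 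Granting this estimate, \propref{DFA} yields the measurability of $\gamma_X^{-1}(\infty)$ with $\mu_p(\gamma_X^{-1}(\infty))=0$ and identifies $\int_{\MS}\mathbb L^{-\gamma_X}\,d\mu_p$ with the convergent series above, the remaining bookkeeping being formal.
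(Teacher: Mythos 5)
Your argument for $(i)$ is essentially the paper's own, spelled out in full: the paper's entire proof of $(i)$ is the remark that $(C\cdot X)=s$ forces $M^{s}\subset I(C)+I(X)$, which is exactly your Nakayama / strictly-increasing-Hilbert-function step, so that $\gamma_X(C)=s$ is read off from the truncation $[C_{s+1}]$ and cuts out a constructible subset of $\Xi_n$ (your Schubert-type semicontinuity makes the level set $\{\gamma_X\le s\}$ even open in $\pi_n(\MS)$); combined with \propref{eles} $(2)$ and \propref{cylinder} $(1)$ this gives the cylinder property. Your convergence argument for $\sum_{s}\mu_p(\gamma_X^{-1}(s))\,\mathbb L^{-s}$ is also sound: the uniform bound $\|\mu_p(D)\|\le 2^{-c_1}$ for every cylinder $D$, coming from $\dim\Sigma^n_{ci}=(N-1)e_0\,n+c_0$ via \propref{fibration} $(3)$, together with the factor $\mathbb L^{-s}$ forces the terms to $0$, which suffices in the non-archimedean complete ring $\widehat{\mathcal M}$. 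For comparison, the paper's proof of $(ii)$ is the single sentence ``apply \propref{DFA} $(3.2)$.''

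The point at which you stop --- measurability of $\gamma_X^{-1}(\infty)$, equivalently the tail estimate $\|\mu_{p}(\{\gamma_X>s\})\|\to 0$ --- is a genuine gap in your write-up: your bound $\|\mu_p(\gamma_X^{-1}(s))\,\mathbb L^{-s}\|\le 2^{-(c_1+s)}$ draws all of its decay from the factor $\mathbb L^{-s}$ and says nothing about $\|\mu_p(\gamma_X^{-1}(s))\|$ itself, which is what \propref{DFA} $(3.1)$ would need in order to make $\bigcup_{s\ge 0}\gamma_X^{-1}(s)$, and hence its complement $\gamma_X^{-1}(\infty)$, measurable. Closing it does require the codimension-grows-with-$s$ estimate you describe (imposing $\dim_{\res}(R/(I(C)+I(X)+M^{s}))\ge s$ on points of $\Sigma^{n}_{ci}$ with $n\asymp s$ cuts out linearly many independent conditions, while each step of the tower only adds $(N-1)e_0$ dimensions), and you do not prove it. Be aware, however, that the paper's own proof does not prove it either: the appeal to \propref{DFA} $(3.2)$ silently presupposes that the union of the finite fibers is measurable. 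So the verdict is: same route as the paper, correct where complete, and the one step you honestly flag as missing is real but is equally missing from --- indeed hidden inside --- the published argument; supplying the linear codimension bound is what would make both proofs airtight.
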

\begin{proof}
$(i)$ Notice that if $(C\cdot X)=s$ then $M^s \subset I(C)+I(X)$.
From this fact it is easy to see that  $\gamma_X^{-1}(s)$ is a
cylindrical subset of $\MS$. $(ii)$ We can apply  \propref{DFA},
(3.2), in order to prove that $\gamma_X$ is exponentially
integrable.
\end{proof}

\medskip
Let $X\subset \EAN$ be a singularity of arbitrary dimension. The
motivic volume of $X$ with respect to $p$ is the integral
$$
vol_{p}(X)= \int_{\MS} \mathbb L ^{-\gamma_X} d \mu_{p}
=\sum_{s \ge 0} \mu_{p}(\gamma_X^{-1}(s))
\mathbb L ^{-s}
$$

Given an integer $e_0$ we denote by $\mathcal H(e_0)$ the finite set
of admissible Hilbert polynomials, see \propref{compositio}.

\medskip
\begin{theorem}[definition of motivic volume]
Let $X$ be  a singularity,  then the series
$$ \sum_{e_0 \ge 1}
    \Bigl( \sum_{p\in \mathcal H(e_0)} vol_{p}(X) \Bigr) {\mathbb L}^{- e_0 }
$$

\noindent converges in $\widehat{\mathcal M}$ to the motivic volume $vol(X)$ of $X$.
\end{theorem}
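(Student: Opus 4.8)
The plan is to check convergence directly from the definition of the completion $\widehat{\mathcal M}$: writing $\mathrm{ord}$ for the order with respect to Kontsevich's filtration and $\|\cdot\|$ for the associated norm, a series $\sum_{e_0\ge 1}a_{e_0}$ converges in $\widehat{\mathcal M}$ if and only if $\|a_{e_0}\|\to 0$, i.e. $\mathrm{ord}(a_{e_0})\to+\infty$. So I must estimate $\mathrm{ord}\!\big(\big(\sum_{p\in\mathcal H(e_0)}vol_{p}(X)\big)\mathbb L^{-e_0}\big)$. Since $\mathcal H(e_0)$ is a finite set (\propref{compositio}), the norm on $\widehat{\mathcal M}$ is non-archimedean, and multiplication by $\mathbb L^{-1}$ raises the order by one, this order is at least $e_0+\min_{p\in\mathcal H(e_0)}\mathrm{ord}\big(vol_{p}(X)\big)$. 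Hence it suffices to bound $\mathrm{ord}\big(vol_{p}(X)\big)$ from below by a quantity that is uniform in $p\in\mathcal H(e_0)$ and, together with $e_0$, tends to $+\infty$; a bound by an absolute constant already does.

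First I would reduce this to an estimate for the total measure $\mu_{p}(\MS)$. As $I(C)$ and $I(X)$ lie in the maximal ideal, $\gamma_X(C)=\dim_{\res}\big(R/(I(C)+I(X))\big)\ge 1$ for every curve singularity $C$, so $\gamma_X^{-1}(0)=\emptyset$ and
$$vol_{p}(X)=\sum_{s\ge 1}\mu_{p}\big(\gamma_X^{-1}(s)\big)\,\mathbb L^{-s}.$$
Each $\gamma_X^{-1}(s)$ is a cylinder contained in $\MS$, so \propref{arquim}$(1)$ gives $\|\mu_{p}(\gamma_X^{-1}(s))\|\le\|\mu_{p}(\MS)\|$, and the ultrametric inequality yields $\|vol_{p}(X)\|\le\tfrac12\|\mu_{p}(\MS)\|$, that is, $\mathrm{ord}\big(vol_{p}(X)\big)\ge 1+\mathrm{ord}\big(\mu_{p}(\MS)\big)$. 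Now $\mu_{p}(\MS)=[\Sigma_{ci}^{n}]\,\mathbb L^{-(n+1)(N-1)e_0}$ for $n$ large, because $\Sigma_{ci}^{n}\subseteq C_n(N,p)=\pi_n(\MS)$ (\propref{fibration}$(2)$, \rmkref{projcons}); set $\delta=\delta(N,e_0)$. By \propref{fibration}$(3)$, for $n\ge\delta$ the maps $a_n\colon\Sigma_{ci}^{n}\to\Sigma_{ci}^{n-1}$ are exhaustive piecewise fibrations with fibre $\res^{(N-1)e_0}$, whence $[\Sigma_{ci}^{n}]=[\Sigma_{ci}^{\delta}]\,\mathbb L^{(n-\delta)(N-1)e_0}$ in $K_0({\bf Sch})$ and therefore
$$\mathrm{ord}\big(\mu_{p}(\MS)\big)\ \ge\ (\delta+1)(N-1)e_0-\dim\Sigma_{ci}^{\delta}$$
(and $vol_{p}(X)=0$ when $\Sigma_{ci}^{\delta}=\emptyset$). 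So everything reduces to bounding $\dim\Sigma_{ci}^{\delta}$ above by $(\delta+1)(N-1)e_0$, up to a term growing slowly in $e_0$.

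This dimension estimate is the heart of the matter and I expect it to be the main obstacle. The idea is to parametrise the $n$-th truncations of complete intersection curve singularities of multiplicity $e_0$ by minimal generating tuples: by \propref{fibration}$(2)$ such a $C$ has $I(C)=(f_1,\dots,f_{N-1})$ minimally, with the orders $\mathrm{ord}(f_i)$, the number and degrees of a minimal set of generators of $I(C)^{\ast}=\tilde J$, and the degree of its first syzygy module all bounded in terms of $e_0$ by \propref{jotatilde}; moreover, for $n\ge\delta$ the cosets $f_i\bmod M^{n}$ already determine $C_{n}$, together with the whole matrix of its coordinates in the relevant cell of $G_n$. Thus $\Sigma_{ci}^{n}$ is the image of a constructible family of such tuples under the action $(f_1,\dots,f_{N-1})\mapsto A\,(f_1,\dots,f_{N-1})$ by invertible matrices $A$ over $R$, and the required bound $\dim\Sigma_{ci}^{n}\le(n+1)(N-1)e_0+O(1)$ follows by comparing the dimension of the tuple space with that of a generic orbit and checking that the excess of the latter over the former is bounded below, uniformly in $p\in\mathcal H(e_0)$. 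The model is the hypersurface case $N=2$, where $\Sigma_{ci}^{n}=C_n(N,p)$ and a direct count of power series of order $e_0$ modulo units gives $\dim\Sigma_{ci}^{n}=e_0(n-e_0)$, hence $\mathrm{ord}\big(\mu_{p}(\MS)\big)=e_0(e_0+1)$. Granting $\mathrm{ord}\big(\mu_{p}(\MS)\big)\ge -O(1)$ uniformly in $p\in\mathcal H(e_0)$, the second paragraph gives $\mathrm{ord}\big(vol_{p}(X)\big)\ge -O(1)$, so $\mathrm{ord}\!\big(\big(\sum_{p\in\mathcal H(e_0)}vol_{p}(X)\big)\mathbb L^{-e_0}\big)\ge e_0-O(1)\to+\infty$, which gives the convergence; the limit is then $vol(X)$ by definition.
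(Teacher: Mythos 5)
Your proposal follows essentially the same route as the paper's proof. Both arguments reduce the convergence of the series over $e_0$ to a lower bound on the order of $vol_{p}(X)$ in Kontsevich's filtration, uniform over the finite set $\mathcal H(e_0)$, and both obtain that bound from the $c$-triviality of the tower $\{\Sigma_{ci}^{n}\}_{n}$ in \propref{fibration}$(3)$: the recursion $[\Sigma_{ci}^{n}]=[\Sigma_{ci}^{n-1}]\,\mathbb L^{(N-1)e_0}$ makes the order of $[\Sigma_{ci}^{n}]\,\mathbb L^{-(n+1)(N-1)e_0}$ independent of $n\ge\delta(N,e_0)$, so everything comes down to a dimension estimate at the bottom level. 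Your intermediate reductions (the observation $\gamma_X^{-1}(0)=\emptyset$, the ultrametric bound $\|vol_{p}(X)\|\le\tfrac12\|\mu_{p}(\MS)\|$ via \propref{arquim}, the identity $\mu_{p}(\MS)=[\Sigma_{ci}^{n}]\,\mathbb L^{-(n+1)(N-1)e_0}$) are all correct and are, if anything, more explicit than the corresponding display in the paper.

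The genuine gap is the one you yourself flag: the estimate $\dim\Sigma_{ci}^{\delta(N,e_0)}\le(\delta(N,e_0)+1)(N-1)e_0+O(1)$ is never proved. Your third paragraph is a heuristic rather than an argument --- the claim that the excess of a generic orbit dimension over the tuple-space dimension is bounded below uniformly in $p\in\mathcal H(e_0)$ is essentially a restatement of what must be shown, and the ambient bound $\dim\Sigma_{ci}^{\delta}\le\dim(Hilb_{\delta})$ from \cite{BI78} grows like $(e_0\delta)^{2-2/N}$ and so does not suffice once $N\ge 3$. You should be aware, though, that the paper does not close this gap either: its proof asserts that each term $[\pi_n(\gamma_X^{-1}(s))\cap\Sigma_{ci}^{n}]\,\mathbb L^{-((n+1)(N-1)e_0+s)}$ lies in $F^{l}\mathcal M$ with $l\ge 0$ ``from \propref{fibration}'', but that proposition by itself only yields the stability of the order in $n$, not its non-negativity. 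So your write-up is conditional at exactly the point where the paper's own proof is silent; completing either argument requires an honest computation of $\dim\Sigma_{ci}^{\delta(N,e_0)}$ (your hypersurface count, giving order $e_0(e_0+1)$ for $N=2$, is the model, and is correct).
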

\begin{proof}
Let us consider the motivic volume of $X$ with respect $p$
$$
\begin{array}{ll}
vol_{p}(X)& = \sum_{s \ge 0}\mu_{p}(  \gamma^{-1}(s)) \mathbb L ^{-s}\\
              &  = \sum_{s \ge 0}[\pi_n( \gamma^{-1}(s)) \cap
              \Sigma^{n}_{ci}] {\mathbb L}^{-((n+1)(N-1) e_0 +s)}
\end{array}
$$

\noindent
with $n \ge s, \delta(N,e_0)$.
From \propref{fibration} we get that
$$
[\pi_n( \gamma^{-1}(s)) \cap \Sigma^{n}_{ci}]
 {\mathbb L}^{-((n+1)(N-1) e_0 +s)} \in F^{l} \mathcal M
$$

\noindent
for a non-negative integer $l$.
From this we get

$$
\Bigl( \sum_{p\in \mathcal H(e_0))} vol_{p}(X) \Bigr) {\mathbb L}^{- e_0 }
\in F^{e_0} \mathcal M
$$

\noindent
and we are done.
\end{proof}

\medskip
Let $\mathcal P$ be a property defined in the set of curve
singularities with Hilbert polynomial $p$. Let $c(\mathcal P)$ be
the set of rational points of $\MS$ corresponding to curve singularities
verifying the property $\mathcal P$. We say that $\mathcal P$ is
finitely determined if there exists an integer $n_0=n_0(\mathcal
P)$, that we may assume $n_0\ge \delta(N,e_0)+1$, \propref{fibration},
such that for all curve singularities $C^1$, $C^2$ with
Hilbert polynomial $p$ and $C^1_n=C^2_n$, $n \ge n_0$, then $C^1$
verifies  $\mathcal P$ if and only if $C^2$  verifies $\mathcal
P$. Notice that any $\mathcal P$  analytically invariant property
defined in the set of reduced curve singularities is finitely
determined, \cite{Eli86b}.

We say that a finitely determined property $\mathcal P$ is
constructible, cfd-property for short,
if the set of truncations $[C_n] \in \Xi_n$, $n\ge
n_0(\mathcal P)$, such that $C$ verifies $\mathcal P$ is a
constructible set ${\mathbf c}_n(\mathcal P)$ of $\Xi_n$.
Trivially  a cfd-property $\mathcal P$ is determined by the
cylindrical set $c(\mathcal P)=\pi^{-1}_n{\mathbf c}_n(\mathcal P)$.
On the other hand every  cylindrical set defines a cfd-property.

Given a  property $\mathcal P$ of curve singularities with Hilbert
polynomial $p$  we define its motivic  Poincare  series by,
$n_0=n_0(\mathcal P)$,
$$
MPS_{\mathcal P}= \sum_{n\ge n_0} [\pi_n(c(\mathcal P))] T^n \in {\mathcal
M}[[T]].
$$

\medskip
We denote by  ${\mathcal M}[T]_{loc}$  the ring of rational power series, i.e.
the sub-ring of ${\mathcal M}[[T]]$
generated by ${\mathcal M}[T]$ and  $(1- {\mathbb L}^{a}  T^b)^{-1}$, $a \in \mathbb N$,
$b \in \mathbb N \setminus \{0\}$.

\medskip
\begin{proposition}
\label{motivicpoincare}
Let $\mathcal P$ be a  cfd-property then it holds
$$
MPS_{\mathcal P} \in {\mathcal M}[T]_{loc}.
$$
\end{proposition}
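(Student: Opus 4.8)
The strategy is to exploit the affineness of the transition maps $a_n$ for large $n$ together with the piecewise-trivial fibration structure supplied by \propref{fibration}. First I would recall that, by \propref{fibration}~$(1)$, for all $n \ge e_0+4$ the morphism $a_n:\Xi_n\longrightarrow\Xi_{n-1}$ is affine, and that each $\Xi_n$ is a subscheme of $Hilb_n$, hence of finite type over $\res$. Since $\mathcal P$ is a cfd-property, $c(\mathcal P)=\pi_n^{-1}({\mathbf c}_n(\mathcal P))$ where ${\mathbf c}_n(\mathcal P)$ is a constructible subset of $\Xi_n$ for every $n\ge n_0=n_0(\mathcal P)$. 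By \propref{eles}~$(3)$, the reduced transition maps satisfy $a_n^{-1}(\Xi_{n-1}(i_.,j_.,q)_{red})=\Xi_n(i_.,j_.,q)_{red}$; combined with the fact that $c(\mathcal P)$ is cylindrical, this gives ${\mathbf c}_n(\mathcal P)=a_{n,n-1}^{-1}({\mathbf c}_{n-1}(\mathcal P))$ for all $n > n_0$, so the series $MPS_{\mathcal P}=\sum_{n\ge n_0}[\pi_n(c(\mathcal P))]T^n$ is controlled entirely by the behavior of the fibers of $a_n$ over ${\mathbf c}_{n-1}(\mathcal P)$.

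The key point is then to understand the class $[{\mathbf c}_n(\mathcal P)]\in\mathcal M$ in terms of $[{\mathbf c}_{n-1}(\mathcal P)]$. Using the cell decomposition $\Xi_n=\bigcup_{i_.,j_.,q}\Xi_n(i_.,j_.,q)$ from \secref{somesub} and the explicit matrix description in {\bf Step 1} of the proof of \propref{fibration}, the restriction of $a_n$ to each cell $\Xi_n(i_.,j_.,q)_{red}$ is a linear projection deleting one row and two columns of the matrix $U$; the fiber over a point is an affine space whose dimension depends only on combinatorial data bounded by $N$ and $e_0$, and in particular is eventually constant in $n$. Stratifying ${\mathbf c}_{n-1}(\mathcal P)$ into finitely many locally closed pieces on which this fiber dimension is constant, I would obtain an identity of the form $[{\mathbf c}_n(\mathcal P)]=\sum_{j}[\text{stratum}_j]\,\mathbb L^{d_j}$ with the $d_j$ independent of $n$ once $n$ is large. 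Since there are only finitely many cells and, by \propref{iarrobino}-type dimension bounds, only finitely many possible fiber dimensions, the sequence $n\mapsto[{\mathbf c}_n(\mathcal P)]$ satisfies a linear recurrence with coefficients of the form $\mathbb L^{d}$; a sequence satisfying such a recurrence has generating function in ${\mathcal M}[T]_{loc}$.

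Concretely I would argue as follows: there exists $n_1\ge n_0$ and an integer $c\ge 0$ such that for all $n\ge n_1$ one has $[\pi_n(c(\mathcal P))]=[\pi_{n-1}(c(\mathcal P))]\,\mathbb L^{c}$, i.e. $c(\mathcal P)$ is eventually $c$-stable in the sense of the stability notion introduced before \propref{stable} — this follows because the transition maps between the relevant constructible sets are piecewise trivial fibrations with affine-space fibers of a fixed dimension $c$, exactly as in \propref{fibration}~$(3)$ but now on the stratum cut out by $\mathcal P$. Granting this, split the sum:
$$
MPS_{\mathcal P}=\sum_{n=n_0}^{n_1-1}[\pi_n(c(\mathcal P))]T^n
 + \sum_{n\ge n_1}[\pi_{n_1}(c(\mathcal P))]\,\mathbb L^{c(n-n_1)}T^n.
$$
The first sum is a polynomial in ${\mathcal M}[T]\subset{\mathcal M}[T]_{loc}$, and the second equals $[\pi_{n_1}(c(\mathcal P))]T^{n_1}\cdot(1-\mathbb L^{c}T)^{-1}$, which lies in ${\mathcal M}[T]_{loc}$ by definition; hence $MPS_{\mathcal P}\in{\mathcal M}[T]_{loc}$.

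**Main obstacle.** The delicate step is establishing eventual $c$-stability of $c(\mathcal P)$ — that is, proving the transition maps $a_{n}:\pi_n(c(\mathcal P))\to\pi_{n-1}(c(\mathcal P))$ are, for $n$ large, piecewise trivial fibrations with affine fiber of a dimension $c$ that does not depend on $n$. For the full moduli space $\MS$ this is exactly the content of \propref{fibration}; the point is that restricting to the constructible locus where property $\mathcal P$ holds does not destroy this structure, because $\mathcal P$ is finitely determined (so ${\mathbf c}_n(\mathcal P)$ is pulled back from level $n_0$) and the fiber of $a_n$ over any point of $\Xi_{n-1}$ depends only on the ambient cell, not on the point. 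Making the "does not depend on $n$" claim precise requires tracking the matrix shape in {\bf Step 1} of \propref{fibration} and invoking \propref{jotatilde}/\propref{shape-std} to see that for $n\ge\delta(N,e_0)$ no new generators of $J^*$ appear in degrees between $e_0+1$ and $n-1$, so the combinatorial fiber data stabilizes.
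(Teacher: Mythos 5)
Your overall shape (a geometric-series recursion for the coefficients, then summing) is the right one, and your final splitting into a polynomial plus $[\pi_{n_1}(c(\mathcal P))]T^{n_1}(1-\mathbb L^{c}T)^{-1}$ is exactly how such a rationality statement gets closed. The gap is in the step you yourself flag as delicate: the eventual $c$-stability of $c(\mathcal P)$ with a single exponent $c$. You assert that \propref{fibration} gives the piecewise-trivial-fibration structure ``for the full moduli space,'' but it does not: part $(1)$ only gives affineness of $a_n$, and the fibration with fiber $\res^{(N-1)e_0}$ in part $(3)$ is proved only on the complete-intersection stratum $\Sigma^n_{ci}$. Your justification --- that the fiber of $a_n$ over a point of $\Xi_{n-1}$ depends only on the ambient cell, read off from the matrix picture in {\bf Step 1} --- describes the fibers of the linear projection $W(n)\cap B_{n}(i_{.},j_{.},q)\to W(n-1)\cap B_{n-1}(i_{.},j_{.},q)$, which are indeed affine spaces. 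But the coefficients of $MPS_{\mathcal P}$ are classes of subsets of $\pi_n(\MS)=C_n(N,p)$, the constructible locus of genuine truncations of curve singularities with Hilbert polynomial $p$ (obtained in \thmref{big-lifting} as an image from a much higher level). The fiber of $a_n$ restricted to that locus is the set of liftings of an $(n-1)$-truncation to an $n$-truncation of an actual curve, and nothing in the paper (or in your sketch) shows this is an affine space of dimension independent of the point and of $n$ outside the case $\nu=N-1$, where the count $(N-1)e_0$ is clean because every perturbation of the $N-1$ generators by degree-$n$ forms from the relevant cell again cuts out a curve with the right Hilbert polynomial. For general $\nu$ the dimension of the lifting space depends at least on the number of generators and on the syzygies, so a single constant $c$ valid on all of $\pi_n(c(\mathcal P))$ is not available.

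The paper's own proof sidesteps this entirely: it computes the coefficients after intersecting with $\Sigma^n_{ci}$ (consistently with the normalization $\mu_p(\cdot)=\mu_p(\cdot,\MS^{ci})$ used for the measure), where \propref{fibration}~$(3)$ gives the exact recursion $[\pi_n(c(\mathcal P))\cap\Sigma^n_{ci}]=[\pi_{n-1}(c(\mathcal P))\cap\Sigma^{n-1}_{ci}]\,\mathbb L^{(N-1)e_0}$ for $n>n_0\ge\delta(N,e_0)+1$, so the whole series collapses to a single geometric series with no preliminary polynomial part. If you want rationality for the unrestricted classes $[\pi_n(c(\mathcal P))]$, you would need a piecewise-trivial fibration statement on each stratum $\Sigma^n_{\nu}$ (or some finer stratification), which is a genuinely new claim not contained in \propref{fibration} and would have to be proved separately.
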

\begin{proof}
If we set $n_0=n_0(\mathcal P)$ then we have
$$
\begin{array}{ll}
 MPS_{\mathcal P}
    &=  \sum_{n\ge n_0} [\pi_n(c(\mathcal P))\cap \Sigma^n_{ci}] T^n \\
    &=  \sum_{n\ge n_0} [\pi_{n_0}(c(\mathcal P))\cap \Sigma^{n_0}_{ci}]
    {\mathbb L}^{(N-1) e_0 n}     T^{n} \\
    &=  [\pi_{n_0}(c(\mathcal P))\cap \Sigma^{n_0}_{ci}]
    \frac{ {\mathbb L}^{ (N-1) e_0 n_0} T^{ n_0}}{1- {\mathbb L}^{(N-1) e_0 } T}
\end{array}
$$
\noindent
and we get the claim.
\end{proof}

Notice that the condition
 "{\em belongs to $\MS$}" is a cfd-property that we
will denote by $\MS$. A simple computation shows:

$$
 MPS_{\MS} =
    [ \Sigma^{n_0}_{ci}]
    \frac{ {\mathbb L}^{ (N-1) e_0 n_0} T^{ n_0}}{1- {\mathbb L}^{(N-1) e_0 } T}
$$
\noindent
where $n_0=\delta(N,e_0)+1$.

\bigskip
\section{Local  properties of the moduli space.}
\label{localglobal}

\medskip
The  purpose of this section is to study the local ring ${\mathcal
O}_{\MS,x}$ where $x$ is a closed point of $\MS$. In particular we
will compute the tangent space $T_{x}= Hom_{{\bf k}}({\bf
n}_{x}/{\bf n}_{x}^{2},{\bf k})$ of $\MS$  at $x$, where ${\bf
n}_{x}$ is the maximal ideal of ${\mathcal O}_{\MS,x}$.

\vskip 2mm We denote by $\bf{ Aff'}$ the subcategory of $\bf{
Aff}$ of the ${\bf k}-$schemes $Spec(A)$ where $A$ is an Artinian
local ${\bf  k}-$algebra. Let $\underline{H}_{N,p(T)}^{x}$ be the
contravariant functor between $\bf{ Aff'}$ and $\bf{ Set}$, such
that for any object $S$ of $\bf{ Aff'}$ we have

$$ \underline{H}_{N,p(T)}^{x}(S)= \left\{ \hskip 1mm
\parbox{5cm}{
$Z \in \underline{\bf{H}}_{N,p(T)}(S)$  such that

\medskip
\centerline{$Z_{s}=C_{x}$}

\medskip
for the closed point $s$ of $S$. } \hskip 1mm \right\} $$

\medskip
We will denote by $D$ the ${\bf  k}-$scheme $Spec({\bf
k}[\varepsilon ])$, let $p_{D}$ be the closed point of $D$. Notice
that the elements of  $\underline{H}_{N,p(T)}^{x}(D)$ are first
embedded deformations of $C_{x}$, but not all, see
\exref{flat-nfam}. It is well known that there exists a bijection
between $T_{x}$ and the set of morphism $f:D \to \MS$ such that
$f(p_{D})=x$. By \thmref{monster} we obtain that there exist a
bijection between $T_{x}$  and $\underline{H}_{N,p(T)}^{x}(D)$.
From \thmref{monster} it is easy to prove

\medskip
\begin{proposition}
The functor $\underline{H}_{N,p(T)}^{x}$ is pro-represented by
$Spec({\mathcal O}_{\MS,x})$.
\end{proposition}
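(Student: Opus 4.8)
The plan is to deduce the statement from \thmref{monster} together with the universal property of the local ring $\mathcal O_{\MS,x}$, extending to an arbitrary Artinian base the argument already carried out above for $S=D$. First I would observe that every object $S=Spec(A)$ of ${\bf Aff'}$ in fact lies in ${\bf Aff}$: since $A$ is an Artinian local $\res$-algebra and $\res$ is algebraically closed, $A$ is finite dimensional over $\res$, hence of finite type. Thus \thmref{monster} applies and yields a bijection $\MF(S)\cong Hom_{\res}(S,\MS)$, natural in $S$, under which a family $Z\in\MF(S)$ corresponds to a morphism $g_Z:S\to\MS$ classifying its fibres; in particular, as in the proof of \thmref{monster}, the value $g_Z(s)$ at the closed point $s$ of $S$ is the (closed) point of $\MS$ that corresponds, under the one-to-one correspondence between closed points of $\MS$ and curve singularities with Hilbert polynomial $p$, to the fibre $Z_s=Z\otimes_S\res(s)$.

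Granting this, the defining condition $Z_s=C_x$ of $\underline{H}_{N,p(T)}^{x}(S)$ becomes simply $g_Z(s)=x$, so the bijection of \thmref{monster} restricts to a bijection
$$
\underline{H}_{N,p(T)}^{x}(S)\ \cong\ \{\,g\in Hom_{\res}(S,\MS)\ :\ g(s)=x\,\},
$$
natural in $S$ because any morphism in ${\bf Aff'}$ sends closed point to closed point and hence preserves the pointedness condition. The main (and essentially only) subtlety is exactly this translation from ``the fibre equals $C_x$'' to ``the classifying morphism is pointed at $x$'': it rests on the correspondence between curve singularities and closed points of $\MS$, which is available under the standing cardinality hypothesis on $\res$, and on the fibrewise compatibility of $Z\mapsto g_Z$ recorded in the proof of \thmref{monster}.

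Finally I would identify the set of morphisms $g:S\to\MS$ with $g(s)=x$ with $Hom_{\res}(S,Spec(\mathcal O_{\MS,x}))$, which completes the proof. Choose an affine open $Spec(B)\subset\MS$ containing $x$, and let $\mathfrak p_x\subset B$ be the prime of $x$, so that $\mathcal O_{\MS,x}=B_{\mathfrak p_x}$. Such a $g$ factors through $Spec(B)$ and amounts to a $\res$-algebra map $\varphi:B\to A$ with $\varphi^{-1}(\mathfrak m_A)=\mathfrak p_x$; each $b\in B\setminus\mathfrak p_x$ has nonzero residue in $A/\mathfrak m_A=\res$, so $\varphi(b)$ is a unit of $A$ (as $\mathfrak m_A$ is nilpotent), whence $\varphi$ factors uniquely through $B_{\mathfrak p_x}$. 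Conversely, every $\res$-algebra map $\mathcal O_{\MS,x}\to A$ is automatically local: composing with $A\to\res$ exhibits the preimage of $\mathfrak m_A$ as the kernel of a surjection $\mathcal O_{\MS,x}\to\res$, hence as the maximal ideal ${\bf n}_x$. Therefore $\underline{H}_{N,p(T)}^{x}(S)\cong Hom_{\res}(S,Spec(\mathcal O_{\MS,x}))$ naturally in $S$, i.e. $\underline{H}_{N,p(T)}^{x}$ is pro-represented by $Spec(\mathcal O_{\MS,x})$; the prefix \emph{pro-} merely reflects that $Spec(\mathcal O_{\MS,x})=\plim Spec(\mathcal O_{\Xi_n,\pi_n(x)})$ is a pro-object of ${\bf Sch}$, mirroring the presentation $\MS=\plim\Xi_n$.
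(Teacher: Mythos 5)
Your argument is correct and follows exactly the route the paper intends: the paper gives no written proof, merely asserting that the statement follows easily from \thmref{monster}, and your proposal is precisely the fleshed-out version of that deduction (restricting the bijection of \thmref{monster} to the pointed condition $Z_s=C_x$ and identifying pointed morphisms $S\to\MS$ with morphisms $S\to Spec({\mathcal O}_{\MS,x})$ via the standard localization argument for Artinian local bases). No gaps; the observation that objects of ${\bf Aff'}$ lie in ${\bf Aff}$ and the fibrewise compatibility $\sigma_n(Z)(s)=[(Z_s)_n]$ recorded in the proof of \thmref{monster} are exactly the points that need to be checked.
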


\medskip
In order to compute $T_{x}$ we need to characterize the set
$\underline{H}_{N,p(T)}^{x}(D)$. Let $I$ be an ideal of $R$, we
consider  Hironaka's invariant $v^{*}(I)$ associated to $I$, see
\cite{Hir64a} Chapter III, definition 1. Let $f_{1},...,f_{s}$ be a
standard basis of $I$ such that $order(f_{i})=v^{i}(I)$. Recall that
from \cite{Eli86b}, Proposition 2, we have that $v^{i}(I(C)) \le
e_0$, $i=1,...,s$, for all curve singularity $C$  of multiplicity
$e_0$.

\medskip
\begin{proposition}
\label{chardefs} Let $J=(f_{1}+\varepsilon
g_{1},...,f_{s}+\varepsilon g_{s})$ be an ideal of ${\bf
k}[\varepsilon ][[\underline X]]$ defining a first order deformation
$\varphi :Z \to D$ of a curve singularity $C$ of multiplicity $e_0$
 defined by the ideal $I=(f_1,\dots, f_s)$. Then the following
conditions are equivalent:
\begin{enumerate}
\item[(1)]
 $Z$ is a  family of curve singularities with Hilbert
polynomial $p(T)$,
\item[(2)]
$\varphi :Z_{e+1} \to D$ is flat,
\item[(3)]
 for all $i=1,...,s$ it holds that
$g_{i} \in (I+M^{e_0+1}:I+M^{e_0+1-v^{i}(I)})$,
\end{enumerate}
\end{proposition}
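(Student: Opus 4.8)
The plan is to prove the cycle $(1)\Rightarrow(2)\Rightarrow(3)\Rightarrow(1)$. The implication $(1)\Rightarrow(2)$ is nothing but the instance $n=e_0+1$ of condition $(i)$ in the definition of $\MF$. For the other two the basic tool is the flatness criterion over the Artinian ring $\dual$: a $\dual$-module $\mathcal N$ is $\dual$-flat if and only if $\{x\in\mathcal N:\varepsilon x=0\}=\varepsilon\mathcal N$, i.e.\ the sequence $0\to\mathcal N/\varepsilon\mathcal N\stackrel{\cdot\varepsilon}{\longrightarrow}\mathcal N\to\mathcal N/\varepsilon\mathcal N\to0$ is exact. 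Applying this to $\mathcal N=\mathcal O_{Z_m}=\dual[[\underline X]]/\bigl((f_1+\varepsilon g_1,\dots,f_s+\varepsilon g_s)+M^m\bigr)$ and unravelling which elements $\varepsilon\gamma$ lie in that ideal (write a typical generator as $\sum_i(a_i+\varepsilon b_i)(f_i+\varepsilon g_i)$ and compare the $\varepsilon^0$- and $\varepsilon^1$-parts), one obtains that $\varphi:Z_m\to D$ is flat precisely when
$$
(\star_m)\qquad \Bigl\{\,\textstyle\sum_i a_ig_i\ :\ (a_i)\in R^s,\ \textstyle\sum_i a_if_i\in M^m\,\Bigr\}\ \subseteq\ I+M^m .
$$
Thus $(2)$ is exactly $(\star_{e_0+1})$. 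Moreover the unique (closed) fibre of $\varphi$ is the given curve singularity $C$, whose truncations have the right lengths $p(m-1)$ since $h^1_C=p$; so by \propref{critfam} flatness of all the $\pi_m$, $m\ge e_0+1$, already forces $Z$ to be a family, and $(1)$ is equivalent to ``$(\star_m)$ holds for every $m\ge e_0+1$''.

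I would then make $(\star_m)$ explicit by analysing the module $\mathcal S_m:=\{(a_i)\in R^s:\sum_i a_if_i\in M^m\}$. Since $f_1,\dots,f_s$ is a standard basis of $I$ with $order(f_i)=v^i(I)$, the order-respecting division against it rewrites any $h=\sum_i a_if_i\in I\cap M^m$ as $h=\sum_i b_if_i$ with $order(b_if_i)\ge m$, i.e.\ $b_i\in M^{m-v^i(I)}$; hence $(a_i)-(b_i)$ is a genuine syzygy and
$$
\mathcal S_m\;=\;Syz(f)\;+\;\textstyle\sum_{i=1}^{s}M^{m-v^i(I)}e_i .
$$
Consequently $(\star_m)$ splits into a ``monomial half'' --- $g_iM^{m-v^i(I)}\subseteq I+M^m$ for all $i$ --- and a ``syzygy half'' --- $\sum_i a_ig_i\in I+M^m$ for all $(a_i)\in Syz(f)$. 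At level $m=e_0+1$ the monomial half is exactly the membership $g_i\in(I+M^{e_0+1}:M^{e_0+1-v^i(I)})$; since $M^{m-v^i(I)}=M^{m-e_0-1}\cdot M^{e_0+1-v^i(I)}$ for $m\ge e_0+1$ (here $v^i(I)\le e_0$, \cite{Eli86b}, Proposition 2), this half propagates from level $e_0+1$ to all $m$ automatically. So the theorem is reduced to matching the syzygy half with the remaining colon condition $g_i\in(I+M^{e_0+1}:I)$ of $(3)$ and to propagating the syzygy half to all levels.

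This reduction is the heart of the argument, and where I expect the real difficulty. By \cite{RV80}, Theorem 1.9 (used exactly as in the proof of \thmref{big-lifting}), $Syz(f)$ is generated over $R$ by lifts $\hat R^1,\dots,\hat R^r$ of a minimal homogeneous system of generators $R^1,\dots,R^r$ of the syzygy module of $f_1^*,\dots,f_s^*$, with components $\hat R^k_i\in M^{\deg R^k-v^i(I)}$ and $\deg R^k\le e_0+1$; this degree bound is the key geometric input, for the multiplication-by-$L$ isomorphisms of \lemref{nec-cond} bound the Castelnuovo--Mumford regularity of $I^*=I(C)^*$ by $e_0$, hence its first syzygies by $e_0+1$ (cf.\ \propref{shape-std}). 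A syzygy $R^k$ of degree $e_0+1$ then contributes only terms $\hat R^k_ig_i$ with $\hat R^k_i\in M^{e_0+1-v^i(I)}$, absorbed by the monomial half, while those of degree $\le e_0$ --- notably the Koszul relations $f_je_i-f_ie_j$, giving $f_jg_i-f_ig_j\in I+M^{e_0+1}$ --- are what couple $(\star_{e_0+1})$ to the remaining colon condition $g_iI\subseteq I+M^{e_0+1}$; feeding $(3)$ back through the same finite list then yields $(\star_m)$ for every $m$. Equivalently, one proves that the obstruction ideal $\bigl(I+\{\textstyle\sum_i a_ig_i:(a_i)\in Syz(f)\}\bigr)/I$ of $\mathcal O_C$, whose vanishing is the flatness of the full deformation $Z\to D$, is already zero once it lies in $M^{e_0+1}\mathcal O_C$ --- a rigidity resting again on $Gr(\mathcal O_C)$ carrying no socle in degrees $\ge e_0$. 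With $(\star_m)$ in hand for all $m\ge e_0+1$, every $\pi_m$ is flat and \propref{critfam} gives $(1)$, closing the cycle. The principal obstacle is thus the syzygy half: the monomial half is elementary, but controlling $\sum_i a_ig_i$ over all syzygies and all levels requires the Robbiano--Valla standard-basis machinery together with the degree bound $\deg R^k\le e_0+1$.
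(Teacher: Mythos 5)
Your reduction of the flatness of $Z_m\to D$ to the condition $(\star_m)$, and the decomposition of $\{(a_i):\sum_i a_if_i\in M^m\}$ as $Syz(f)+\sum_i M^{m-v^i(I)}e_i$ via the Robbiano--Valla division (\cite{RV80}, Corollary 1.8), are correct and are exactly the two ingredients of the paper's proof (which phrases $(\star_m)$ as Artin's syzygy flatness criterion); the ``monomial half'' and its propagation from level $e_0+1$ to all levels are also fine. The gap is in the ``syzygy half'', which you rightly identify as the crux but then resolve incorrectly. The hypothesis that $J$ defines a \emph{first order deformation} $\varphi:Z\to D$ means that $Z\to D$ is flat; hence every syzygy $(c_i)$ of $(f_1,\dots,f_s)$ lifts to a syzygy of $(f_i+\varepsilon g_i)$, which says precisely that $\sum_i c_ig_i\in I$, and the syzygy half of $(\star_m)$ holds for every $m$ with no work. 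This is exactly what the paper does in $(3)\Rightarrow(1)$: it splits $(a_i)=(a_i-C_i)+(C_i)$ with $C_i\in M^{n-v^i}$, handles $(C_i)$ by condition $(3)$, and handles the syzygy $(a_i-C_i)$ by the flatness of $Z$ over $D$. You instead treat the flatness of $Z\to D$ as a conclusion to be extracted from $(3)$, via the claim that the obstruction ideal $W=\bigl(I+\{\sum_ic_ig_i:(c_i)\in Syz(f)\}\bigr)/I$ vanishes as soon as it lies in ${\bf m}^{e_0+1}$. That rigidity claim is false: $W$ is just a finitely generated ideal of ${\mathcal O}_C$, and nothing forces an ideal contained in ${\bf m}^{e_0+1}$ to be zero.

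Concretely, let $C\subset({\bf k}^3,0)$ be the monomial curve $(t^3,t^4,t^5)$, so $I=(f_1,f_2,f_3)$ with $f_1=X^2Y-Z^2$, $f_2=YZ-X^3$, $f_3=XZ-Y^2$ a standard basis, $e_0=3$ and $v^i(I)=2$; take $g_1=g_2=0$, $g_3=Y^2$. Then $(3)$ holds, since $Y^2(I+M^2)\subset M^4\subset I+M^4$, and a short check using the Hilbert--Burch syzygies $(X,Y,Z)$ and $(Y,Z,X^2)$ shows that $(\star_4)$, i.e.\ condition $(2)$, holds as well. Yet $\sum_ic_ig_i=Y^2Z=t^{13}\notin I$ for the syzygy $(X,Y,Z)$, and $t^{13}\notin{\bf m}^5$, so $(\star_5)$ fails and $Z_5\to D$ is not flat: here $W\subset{\bf m}^4$ but $W\neq 0$. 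This is not a counterexample to the proposition --- the ideal $(f_1,f_2,f_3+\varepsilon Y^2)$ does not define a flat $Z\to D$, so the standing hypothesis is violated --- but it rules out any proof of $(3)\Rightarrow(1)$ that, like yours, never invokes the flatness of $Z$ over $D$. (Your auxiliary degree bound $\deg R^k\le e_0+1$ for the first syzygies of $I^*$ is also not established in the paper, which only records the much weaker bound $\delta(N,e_0)$ via \cite{Sjo92}; but this becomes moot once the flatness hypothesis is used.)
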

\begin{proof}
We put $\nu^i=\nu^i(I)$.
We will use the syzygy flatness criterion, see for instance
\cite{Art76}, Corollary page 11. From this result, we deduce that
$E_{n}={\bf k}[\varepsilon ][[\underline X]]/(J+M^{n})$ is a flat
${\bf k}[\varepsilon ]-$module if and only if for all
$a_{1},...,a_{s} \in R$ such that $\sum_{i=1}^{s}a_{i}f_{i} \in
M^{n}$ there exist $A_{1},...,A_{s} \in R$ such that
$$\sum_{i=1}^{s}(a_{i}+\varepsilon A_{i})(f_{i}+\varepsilon g_{i})
\in M^{n}{\bf k}[\varepsilon ][[\underline  X]].$$

\noindent By definition of family of curve singularities we get that $(1)$ implies $(2)$.

\noindent $(2)$ implies $(3)$. Let us assume that $\varphi
:Z_{e_0+1} \to D$ is flat. Let $a$ be an element of
$M^{e_0+1-v^{i}}$, we need to show that for all $i=1,...,s$  we have
$g_{i}a \in I+ M^{e_0+1}.$ From the flatness of $E_{e_0+1}$ and
$f_{i}a \in M^{e_0+1}$, we deduce that there exists $A_{1},...,A_{s}
\in R$ such that $g_{i}a+ \sum_{i=1}^{s}A_{i}f_{i} \in M^{e_0+1},$
so $g_{i}a \in M^{e_0+1}+I$.

\noindent
$(3)$ implies $(1)$.
 Suppose that for all $i=1,...,s$ it holds $$g_{i} \in
(I+M^{e_0+1}:I+M^{e_0+1-v^{i}}),$$

\noindent so for all $n \ge e_0+1$ we get $g_{i} \in
(I+M^{n}:I+M^{n-v^{i}}).$ Let  $a_{1},...,a_{s} \in R$ be elements
with $\sum_{i=1}^{s}a_{i}f_{i} \in M^{n}$. Since $f_{1},...,f_{s}$
is an standard basis of $I$, by \cite{RV80}, Corollary 1.8, there
exists $C_{i} \in M^{n-v^{i}}$, $i=1,...,s$, such that $
\sum_{i=1}^{s}a_{i}f_{i}=\sum_{i=1}^{s}C_{i}f_{i}, $
 so $a_{1}-C_{1},...,a_{s}-C_{s}$ is a syzygy of
$f_{1},...,f_{s}$.
 From the flatness of $Z$ over $D$, we deduce
that there exist $A_{1},...,A_{s}$ such that
$
\sum_{i=1}^{s}(a_{i}-C_{i}+\varepsilon A_{i})(f_{i}+\varepsilon
g_{i})=0. $
From the assumption $(2)$ there exist $B_{1},...,B_{s}
\in R$ such that
$$ \sum_{i=1}^{s} C_{i}g_{i}-\sum_{i=1}^{s} B_{i}f_{i} \in M^{n}.
$$
\noindent Since $\sum_{i=1}^{s}a_{i}f_{i} \in M^{n} $, from the
two last equalities it is easy to see that

$$ \sum_{i=1}^{s}(a_{i}+\varepsilon
(A_{i}-B_{i}))(f_{i}+\varepsilon g_{i}) \in M^{n}. $$

\noindent We have proved that $E_{n}$ is a $D-$module flat for all
$n \ge e_0+1$, so $Z$ is a family.
 \end{proof}

\medskip
\begin{corollary}
For all closed point $x$ of $\MS$, the morphism $$ d(\pi _{n}):
T_{\MS, x} \longrightarrow T_{\Xi _{n}, \pi _{n}(x)} $$ \noindent
is surjective, $n \ge e_0+1$.
\end{corollary}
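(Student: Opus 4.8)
The plan is to translate both sides of the map into deformation--theoretic terms and then reduce to a lifting statement of the same nature as \thmref{big-lifting}. First I would use the Proposition just proved to identify $T_{\MS,x}$ with $\underline{H}_{N,p(T)}^{x}(D)$, the set of first--order embedded deformations $Z\to D$ of $C_x$ that are families of curve singularities with Hilbert polynomial $p$. On the other side, $\Xi_n$ is an open subscheme of $W'(n)=W(e_0+1,n,p)\cap Hilb_n$, which represents $F_{e_0+1,n}\times_{\underline G_n}\underline{Hilb}_n$ (\propref{WWW-1}); hence $T_{\Xi_n,\pi_n(x)}$ is the set of first--order embedded deformations $Z^{(n)}\to D$ of the truncation $(C_x)_n\subset Spec(R_n)$ whose further truncations $(Z^{(n)})_t\to D$ stay flat of length $p(t-1)$ for $e_0+1\le t\le n$; the open condition ``$\in B'_n$'' is automatic over the Artinian base $D$ because $[(C_x)_n]\in\Xi_n(i_{.},j_{.},q)$ for suitable $i_{.},j_{.},q$ by \propref{eles}~(1). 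Under these identifications $d(\pi_n)$ is the truncation map $Z\mapsto Z_n$, so the corollary becomes: every such $Z^{(n)}$ is the $n$-th truncation of a family first--order deformation of $C_x$.

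So I would fix $Z^{(n)}\in T_{\Xi_n,\pi_n(x)}$ and write $I=I(C_x)=(f_1,\dots,f_s)$ with $f_1,\dots,f_s$ a standard basis and $order(f_i)=v^i:=v^i(I)\le e_0$ (\cite{Eli86b}, Proposition 2). As in the proof of \propref{chardefs}, $Z^{(n)}$ is given by an ideal $J^{(n)}=(f_1+\varepsilon\bar g_1,\dots,f_s+\varepsilon\bar g_s)$ of $\dual[[\underline X]]/(\underline X)^n$ for suitable $\bar g_i\in R$. The next step is to choose lifts $g_i\in R$ of $\bar g_i$ for which $J=(f_1+\varepsilon g_1,\dots,f_s+\varepsilon g_s)$ is again flat over $\dual$; by the syzygy flatness criterion this amounts to $\sum_i\rho_i g_i\in I$ for every syzygy $\rho=(\rho_1,\dots,\rho_s)$ of $f_1,\dots,f_s$ in $R$. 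Arbitrary lifts give only $\sum_i\rho_i g_i\in I+M^n$ (from the flatness of $J^{(n)}$), so the $g_i$ must be corrected within $M^n$. I would carry this out exactly as in the proof of \thmref{big-lifting}: a finite generating set of the syzygy module of $f_1,\dots,f_s$ is bounded in degree by $deg(Syz_1(\tilde J))\le\delta(N,e_0)$ (\propref{jotatilde}~(3)), and, $f_1,\dots,f_s$ being a standard basis, its syzygies are controlled by those of the initial forms via \cite{RV80}, Theorem~1.9 and Corollary~1.8; the congruences modulo $M^n$ satisfied by the $\bar g_i$ then let one solve the resulting finite syzygy system of the type $(Syz)$ by Artin's approximation theorem over $\dual[[\underline X]]$ (\cite{Art69b}, Theorem~6.1), keeping the correction inside $M^n$ so that $Z_n$ remains equal to $Z^{(n)}$.

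Once $J$ is flat over $\dual$, \propref{chardefs} finishes the argument: its condition $(3)$ constrains the $g_i$ only modulo $M^{e_0+1}$, hence is inherited from the $\bar g_i$ --- indeed the $(e_0+1)$-truncation $(Z^{(n)})_{e_0+1}\to D$ is flat (part of the $\Xi_n$-data, as $n\ge e_0+1$), so $\bar g_i\in (I+M^{e_0+1}:I+M^{e_0+1-v^i})$ by the implication $(2)\Rightarrow(3)$ of \propref{chardefs}, which uses only the flatness of the $(e_0+1)$-truncation --- and then $(3)\Rightarrow(1)$ shows $Z=Spec(\dual[[\underline X]]/J)$ is a family; since $Z_n=Z^{(n)}$ we get $d(\pi_n)([Z])=Z^{(n)}$, proving surjectivity. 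The main obstacle, as indicated, is the flatness step of the second paragraph: one must arrange the perturbed syzygy equations to have a solution that does not disturb the $n$-th truncation, and this is precisely where the standard--basis syzygy estimates of Sections~1 and~2 are essential. (Alternatively the whole argument can be run through $T_{\MS,x}=\varprojlim_m T_{\Xi_m,\pi_m(x)}$, which follows from $\mathcal A=\varinjlim\mathcal A_m$ and the affineness of the $a_m$; using the block description of $a_m$ in Step~1 of the proof of \propref{fibration} together with \propref{eles}~(3) one checks that each transition $d(a_m)$ is surjective on the tangent spaces of the Grassmannian cells, whence the projection onto $T_{\Xi_n,\pi_n(x)}$ is surjective.)
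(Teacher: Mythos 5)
The paper offers no argument for this corollary at all---it is stated as an immediate consequence of \propref{chardefs}---so there is no written proof to compare yours with step by step. Your identification of $T_{\MS,x}$ with the first--order family deformations of $C_x$, of $T_{\Xi_n,\pi_n(x)}$ with the truncated data, and your use of the implications $(2)\Rightarrow(3)$ and $(3)\Rightarrow(1)$ of \propref{chardefs} are all consistent with what the author evidently intends, and you are right that the real issue is restoring flatness of $Z$ over $\dual$, which is a standing \emph{hypothesis} of \propref{chardefs} rather than one of its conclusions.

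Precisely at that point, however, your argument has a gap. From a tangent vector of $\Xi_n$ you know the $\bar g_i$ only modulo $I_x+M^n$, hence only that $\sum_i\rho_i\bar g_i\in I_x+M^n$ for the generating syzygies $\rho$ of $f_1,\dots,f_s$. Artin's theorem---and equally its linear avatar via Artin--Rees---produces from an approximate solution modulo $M^{n}$ an exact solution agreeing with it only modulo $M^{n'}$ for some $n'<n$ (with $n=\beta_{Artin}(\dots,n')$, resp.\ $n'=n-c$ for an Artin--Rees constant $c$); this loss is exactly why \thmref{big-lifting} must take its input at level $\overline n=\beta(N,e_0,n)$ in order to control the output at level $n$. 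Since your input is available only at level $n$, the corrected $g_i$ agree with $\bar g_i$ only modulo $M^{n'}$, so the family you construct hits the prescribed tangent vector in $T_{\Xi_{n'},\pi_{n'}(x)}$ but not necessarily in $T_{\Xi_n,\pi_n(x)}$. What this proves is $\mathrm{im}\, d(\pi_{n'})\supseteq \mathrm{im}\, d(a_{n,n'})$, which gives surjectivity of $d(\pi_{n'})$ only if the transition maps $d(a_{m,n'})$ are themselves surjective---and since $T_{\MS,x}=\plim_m T_{\Xi_m,\pi_m(x)}$ with finite--dimensional terms, surjectivity of $d(\pi_n)$ is \emph{equivalent} to surjectivity of all $d(a_{m,n})$, so this is circular. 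Your parenthetical alternative does not close the gap either: the projections between the cells $W(m)\cap B_{m}(i_{.},j_{.},q)$ are linear and surjective on tangent spaces, but $\Xi_m$ is cut out inside them by the additional (non-linear) $Hilb_m$ conditions, so surjectivity of $d(a_m)$ on $T_{\Xi_m}$ does not follow. To repair the proof you must show the correction $\delta_i$ can be taken in $I_x+M^n$ itself, e.g.\ by proving that the obstruction class in $Ext^1_R(I_x,{\bf m}_x^{n})$ vanishes for homomorphisms satisfying the conditions imposed by $W(e_0+1,n,p)$ and condition $(3)$ of \propref{chardefs}; a generic appeal to Artin approximation does not supply this.
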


\medskip
\propref{chardefs} enable us  to give an example of a  first order
deformation that is not a  family.

\medskip
\begin{example}
\label{flat-nfam} Let  us consider the plane curve singularity
defined by the equation $X_{1}^{3}=0$. In this case we have
$v_{1}=3$ and $e_0=3$. Let us consider the first order deformation
$Z$  defined by $ X_{1}^{3} + \varepsilon X_{1}, $ i.e.
$g_{1}=X_{1}$. From the last result we get that $Z_{4} \to D$ is
not flat.
\end{example}

\medskip
It is well known that the first order embedded deformations of $C_{x}$
are classified by the normal module
$$N_{x}=Hom_{R/I_{x}}(I_{x}/I_{x}^{2},R/I_{x}).$$

\noindent
 If we denote by
${\bf embdef}(C_{x})$
the first order embedded deformations of $C_{x}$, we will define a
bijective map $\tau : N_{x} \longrightarrow {\bf embdef}(C_{x}).$
Let $g:I_{x}/I_{x}^{2} \to R/I_{x}$ be a morphism of $R/I_{x}-$modules.
Then a lifting of $g$ is a $R/I_{x}-$module morphism
$(\overline{g}_{.})=
(\overline{g}_{1},...,\overline{g}_{s}):(R/I_{x})^{s} \to
R/I_{x}$ such that the following diagram is commutative
$$
\xymatrix{
(R/I_{x})^{s}  \ar[d]_{(f_{.})}   \ar[dr]^{\relax (\overline{g}.)} &         \\
I_{x}/I_{x}^{2}   \ar[r]_{g}                       & R/I_{x} \\
}
$$
\noindent $\tau (g)$  is the first order deformation defined by
the ideal $J=(f_{1}+\varepsilon g_{1},...,f_{s}+\varepsilon
g_{s})$
 with $(g_{1},..,g_{s})$  a lifting of $g$.

\medskip
We denote by $N_{x}'$ the set of $g \in N_{x}$ for which there
exist a lifting $(g_{1},..,g_{s})$ such that
 for all $i=1,...,s$ it holds that
$\overline{g}_{i} \in ({\bf m}^{e_0+1}_{x}:{\bf
m}_{x}^{e_0+1-v^{i}}).$
 From last Proposition we deduce

\medskip
\begin{proposition}
$\tau $ defines a bijection between $T_{x}$ and $N_{x}'.$
\end{proposition}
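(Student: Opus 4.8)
The plan is to show that $\tau$ carries $N_x'$ bijectively onto the set of first order embedded deformations of $C_{x}$ that happen to be families of curve singularities with Hilbert polynomial $p$, and that this set is canonically $T_x$.

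First I would line up the two parametrizations already at hand. By \thmref{monster} --- and the pro-representability of $\underline{H}_{N,p(T)}^{x}$ by $Spec({\mathcal O}_{\MS,x})$ recorded above --- the set $\underline{H}_{N,p(T)}^{x}(D)$ is canonically identified with $T_{x}$. On the other side, $\tau\colon N_{x}\to{\bf embdef}(C_{x})$ is a bijection onto the set of \emph{all} first order embedded deformations of $C_{x}$ inside ${({\bf k}^N,0)}_{D}$. A family of curve singularities over $D$ is in particular flat over $D$ (\propref{critfam}\,$(2.1)$) with special fibre $C_{x}$, hence $\underline{H}_{N,p(T)}^{x}(D)\subseteq{\bf embdef}(C_{x})$. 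It therefore suffices to prove the set equality $\tau(N_x')=\underline{H}_{N,p(T)}^{x}(D)$; as $\tau$ is injective this gives a bijection $N_x'\to\underline{H}_{N,p(T)}^{x}(D)=T_{x}$.

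Next I would unwind $\tau$ and invoke \propref{chardefs}. Fix the standard basis $f_{1},\dots,f_{s}$ of $I_{x}$ with $order(f_{i})=v^{i}(I_{x})$ used to define $\tau$. For $g\in N_{x}$, a lifting $(g_{1},\dots,g_{s})$ of $g$ amounts to a choice of representatives $g_{i}\in R$ of the elements $\overline{g}_{i}\in R/I_{x}$ determined by $g$ via the commuting triangle, and $\tau(g)$ is the deformation $Z$ cut out in ${\bf k}[\varepsilon][[\underline X]]$ by $J=(f_{1}+\varepsilon g_{1},\dots,f_{s}+\varepsilon g_{s})$. This $Z$ is a closed subscheme of ${({\bf k}^N,0)}_{D}$, flat over $D$ with special fibre $C_{x}$, so the hypotheses of \propref{chardefs} hold; by the equivalence $(1)\Leftrightarrow(3)$ there, $Z\in\underline{H}_{N,p(T)}^{x}(D)$ if and only if $g_{i}\in(I_{x}+M^{e_0+1}:I_{x}+M^{e_0+1-v^{i}(I_{x})})$ for every $i$.

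Finally I would match this with the definition of $N_x'$. Since $g_{i}I_{x}\subseteq I_{x}\subseteq I_{x}+M^{e_0+1}$ automatically, the displayed membership is equivalent to $g_{i}\,M^{e_0+1-v^{i}(I_{x})}\subseteq I_{x}+M^{e_0+1}$, i.e., after passing to ${\mathcal O}_{C_{x}}=R/I_{x}$, to $\overline{g}_{i}\,{\bf m}_{x}^{e_0+1-v^{i}(I_{x})}\subseteq{\bf m}_{x}^{e_0+1}$, that is $\overline{g}_{i}\in({\bf m}_{x}^{e_0+1}:{\bf m}_{x}^{e_0+1-v^{i}(I_{x})})$; this depends only on $\overline{g}_{i}$, not on the chosen representative $g_{i}$, and is exactly the condition defining $N_x'$. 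Hence $\tau(g)\in\underline{H}_{N,p(T)}^{x}(D)$ precisely when $g\in N_x'$, giving $\tau(N_x')=\underline{H}_{N,p(T)}^{x}(D)$ and the proposition. The one point that needs care --- the rest being routine --- is this last translation: checking that the colon condition of \propref{chardefs}\,$(3)$, phrased in $R$ for particular representatives, descends to a well defined condition on $N_{x}$ and agrees with the colon condition in ${\mathcal O}_{C_{x}}$ used to define $N_x'$, together with the small but necessary check that each $\tau(g)$ is genuinely a flat embedded deformation in ${({\bf k}^N,0)}_{D}$, so that \propref{chardefs} is applicable at all.
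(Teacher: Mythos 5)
Your argument is correct and is exactly the route the paper takes: the paper's proof of this proposition is simply the one-line deduction from \propref{chardefs} (via the identification $T_x\cong\underline{H}_{N,p(T)}^{x}(D)$ coming from pro-representability), and you have merely written out the details --- the equivalence $(1)\Leftrightarrow(3)$, the translation of the colon condition from $R$ to ${\mathcal O}_{C_x}$, and the well-definedness on $N_x$ --- all of which check out.
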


\medskip
\noindent In a very few cases we can compute $v^{*}(I(C))$; for
example if $C$ is a curve singularity with  maximal Hilbert function
then $v^{*}(I(C)) = \{e_0, \dots, e_0+1\}$, see \cite{Ore83}. Notice
in this case the ring $Gr_{{\bf m}}({\mathcal O}_{C})$ is
Cohen-Macaulay, $\bf m$ is the maximal ideal of  ${\mathcal O}_{C}$.
We can prove a more general result without any restriction on
$v^{*}$.

\medskip
\begin{proposition}
Let $C$ be a curve singularity of ${({\bf k}^N,0)}$ of multiplicity
$e_0$ and  embedding dimension $b$. It holds:

\noindent $(1)$ If the associated graded ring $Gr_{{\bf
m}}({\mathcal O}_{C})$ is Cohen-Macaulay then $({\bf m}^{e_0+1}:{\bf
m}^{e_0+1-v^{i}})={\bf m}^{v^{i}}$ for $i=1,...,s$.

\noindent
$(2)$
 If $C$ has a maximal Hilbert function and $e_0 =
\binom{b-1+r}{r}$, for some integer $r$, then $I(C)$ has an standard
basis of $s=\binom{b-1+r}{r+1}$ forms of degree $r+1$ and the ring
$Gr_{{\bf m}}({\mathcal O}_C)$ is Cohen-Macaulay. In particular
$({\bf m}^{e_0+1}:{\bf m}^{e_0-r})={\bf m}^{r+1}$.

\end{proposition}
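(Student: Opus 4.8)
The plan is to prove (1) directly and then deduce (2), whose argument ends with an appeal to (1).

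For part (1): the containment ${\bf m}^{v^{i}}\subseteq({\bf m}^{e_0+1}:{\bf m}^{e_0+1-v^{i}})$ is trivial, since ${\bf m}^{v^{i}}{\bf m}^{e_0+1-v^{i}}\subseteq{\bf m}^{e_0+1}$ and $e_0+1-v^{i}\ge 1$ (recall $v^{i}=v^{i}(I(C))\le e_0$ by \cite{Eli86b}, Proposition 2). For the reverse containment I would use that, $\res$ being infinite and $Gr_{{\bf m}}({\mathcal O}_{C})$ being Cohen--Macaulay of dimension one, a general linear form $\ell\in{\bf m}\setminus{\bf m}^{2}$ has initial form $\ell^{*}$ a nonzerodivisor on $Gr_{{\bf m}}({\mathcal O}_{C})$; hence the order is additive along powers of $\ell$, i.e. $\mathrm{ord}(a\ell^{k})=\mathrm{ord}(a)+k$ for $0\ne a\in{\mathcal O}_{C}$, because $(a\ell^{k})^{*}=a^{*}(\ell^{*})^{k}\ne 0$. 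Then, given $a\in({\bf m}^{e_0+1}:{\bf m}^{e_0+1-v^{i}})$ with $a\ne 0$, from $\ell^{\,e_0+1-v^{i}}\in{\bf m}^{e_0+1-v^{i}}$ we get $a\ell^{\,e_0+1-v^{i}}\in{\bf m}^{e_0+1}$, so $\mathrm{ord}(a)+(e_0+1-v^{i})\ge e_0+1$, whence $\mathrm{ord}(a)\ge v^{i}$ and $a\in{\bf m}^{v^{i}}$.

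For part (2): write $G=Gr_{{\bf m}}({\mathcal O}_{C})$. First I would pin down the Hilbert function of $G$. After a linear change of coordinates $G\cong S'/\tilde J$ with $S'=\res[X_{1},\dots,X_{b}]$ and $\tilde J$ free of linear forms; maximality of the Hilbert function together with $e_0=\binom{b-1+r}{r}=\dim_{\res}S'_{r}$ forces $\dim_{\res}(S'/\tilde J)_{t}=\min\{\binom{b-1+t}{t},e_0\}$, which equals $\binom{b-1+t}{t}$ for $t\le r$ and $e_0$ for $t\ge r$. Thus $\tilde J_{t}=0$ for $t\le r$ and, by Pascal's rule, $\dim_{\res}\tilde J_{r+1}=\binom{b+r}{r+1}-e_0=\binom{b-1+r}{r+1}$. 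Next, by \cite{Ore83} — this is exactly the fact recorded in the paragraph preceding the statement — $G$ is Cohen--Macaulay, so a general linear form $\ell$ is a nonzerodivisor on $G$; the Artinian reduction $G/\ell G\cong S''/\overline J$, $S''=\res[X_{1},\dots,X_{b-1}]$, then has Hilbert function the first difference of that of $G$, namely $\binom{b-2+t}{t}$ for $t\le r$ and $0$ for $t\ge r+1$, which says precisely that $\overline J$ is the ideal of $S''$ generated by all forms of degree $r+1$. Lifting these generators through the nonzerodivisor $\ell$ (graded Nakayama), $\tilde J$ is generated in degrees $\le r+1$, and since $\tilde J_{\le r}=0$ it is minimally generated by $\dim_{\res}\tilde J_{r+1}=\binom{b-1+r}{r+1}$ forms of degree $r+1$; pulling these back to $R$ gives a minimal standard basis of $I(C)$ consisting of $s=\binom{b-1+r}{r+1}$ power series of order $r+1$ (together, when $b<N$, with $N-b$ linear ones accounting for the embedding dimension). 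Finally, applying part (1) to a standard basis element of order $v^{i}=r+1$, for which $e_0+1-v^{i}=e_0-r$, yields $({\bf m}^{e_0+1}:{\bf m}^{e_0-r})={\bf m}^{r+1}$.

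The routine points — the binomial identities and the graded Nakayama lemma used to lift a system of generators through a regular element — I would not spell out. The one genuinely substantial input is the Cohen--Macaulayness of $G$ for a curve singularity of maximal Hilbert function; I intend to quote it from \cite{Ore83}, since it is precisely the fact already announced in the paragraph preceding the proposition, and reproving it — the point being that an embedded component of the tangent cone is incompatible with the Hilbert function of ${\mathcal O}_{C}$ being maximal — would carry us well beyond the scope of this section.
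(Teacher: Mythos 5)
Your proof is correct and follows essentially the same route as the paper: part (1) rests, as in the paper's argument, on the initial form of a general (superficial) linear element being a nonzerodivisor on the Cohen--Macaulay associated graded ring --- you phrase this as additivity of orders along powers of $\ell$, the paper as injectivity of multiplication by $x^{e_0-t}$ from ${\bf m}^{t}/{\bf m}^{t+1}$ to ${\bf m}^{e_0}/{\bf m}^{e_0+1}$, which is the same fact. For part (2) the paper simply writes ``this follows from \cite{Ore83}''; you spell out the Hilbert-function computation and the Artinian reduction but rely on the same external input (Cohen--Macaulayness and the degrees of the standard basis from Orecchia), so the substance agrees.
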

\begin{proof}
$(1)$ We put $v^i= v^i(I)$. Notice that we always have ${\bf
m}^{v^{i}} \subset ({\bf m}^{e_0+1}:{\bf m}^{e_0+1-v^{i}}).$ Let $h$
be an element of ${\mathcal O}_{C}$ such that $h \mbox{ }{\bf
m}^{e_0+1-v^{i}} \subset {\bf m}^{e_0+1}$, $i=1,..,s$.

\noindent Suppose that $h \not\in  {\bf m}^{v^{i}}$; let $t$ be
the least integer such that $h \in {\bf m}^{t} \setminus {\bf
m}^{t+1}$.
Let $x$ be a degree one superficial element  of
${\mathcal O}_{C}$.
Then $x$ defines a non-zero divisor of
 $gr_{{\bf m}}({\mathcal O}_{C})$.
This implies that $ {\bf m}^{t}/{\bf m}^{t+1}
\stackrel{x^{e_0-t}}{\longrightarrow} {\bf m}^{e_0}/{\bf m}^{e_0+1}$
is a monomorphism. Notice that $e_0-t \ge e_0+1-v^{i}$, so
 $h \overline{x}^{e_0-t} \in {\bf m}^{e_0+1} $.
Since the coset of $h$ in ${\bf m}^{t}/{\bf m}^{t+1}$ is non-zero
we get a contradiction. Hence $h \in {\bf m}^{v^{i}} $, and we get
the result.

\noindent
$(2)$
We only need to prove that $s=\binom{b-1+r}{r+1}$ and
$v^{i}(I(C))=r+1$ for $i=1,...,s$. This follows from \cite{Ore83}.
\end{proof}

\medskip
Notice that very often if $p$ is a rigid Hilbert polynomial then
the associated graded ring is Cohen-Macaulay, see for instance
\cite{EV91}, \cite{Eli99}.

\medskip
\begin{corollary}
Let $x$ be a closed point of ${\bf H}_{2,p(T)}$. Then there exists
a natural bijection  between $T_{x}$ and ${\bf m}_{x}^{e_0}$.
\end{corollary}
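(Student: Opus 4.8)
The plan is to run the normal-module description of $T_x$ through the observation that for $N=2$ every curve singularity is a hypersurface. Since $x$ is a closed point of ${\bf H}_{2,p(T)}$, the curve $C_x\subset({\bf k}^2,0)$ is a plane curve singularity, so $I_x=(f)\subset R={\bf k}[[X_1,X_2]]$ is principal with $\mathrm{order}(f)=e_0$; if the embedding dimension is $1$ this still holds, since then $R/I_x$ is a one-dimensional regular ring and $I_x$ is a height-one prime of the UFD $R$, hence principal. In particular $s=1$ and $v^1(I_x)=e_0$. As $R$ is a domain and $f\neq0$, multiplication by $f$ is an isomorphism $R/I_x\stackrel{\sim}{\longrightarrow}I_x/I_x^2$ of ${\mathcal O}_{C_x}$-modules, so the normal module $N_x=\mathrm{Hom}_{{\mathcal O}_{C_x}}(I_x/I_x^2,{\mathcal O}_{C_x})$ is identified with ${\mathcal O}_{C_x}$ via $g\mapsto g(\bar f)$, and under this identification the lifting $(\overline g_1)$ of $g$ is just multiplication by $g(\bar f)$.

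Next I would pin down the subset $N_x'\subset N_x$ in these terms. By definition $g\in N_x'$ exactly when $g(\bar f)\in({\bf m}_x^{e_0+1}:{\bf m}_x^{e_0+1-v^1})=({\bf m}_x^{e_0+1}:{\bf m}_x)$. Now the tangent cone $Gr_{{\bf m}_x}({\mathcal O}_{C_x})={\bf k}[X_1,X_2]/(f^*)$ is a hypersurface in a regular ring, with $f^*\neq0$ because $\mathrm{order}(f)=e_0<\infty$, hence it is Cohen--Macaulay. Applying the Proposition above that gives $({\bf m}^{e_0+1}:{\bf m}^{e_0+1-v^i})={\bf m}^{v^i}$ when $Gr_{{\bf m}}({\mathcal O}_C)$ is Cohen--Macaulay, with $v^1=e_0$, we obtain $({\bf m}_x^{e_0+1}:{\bf m}_x)={\bf m}_x^{e_0}$. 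Therefore $N_x'$ corresponds precisely to the submodule ${\bf m}_x^{e_0}\subset{\mathcal O}_{C_x}$.

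Finally, combining this with the bijection $\tau:T_x\stackrel{\sim}{\longrightarrow}N_x'$ established in the Proposition above yields a bijection $T_x\stackrel{\sim}{\longrightarrow}{\bf m}_x^{e_0}$ (which is in fact ${\bf k}$-linear). For naturality: the only choice made is that of a generator $f$ of $I_x$, and replacing $f$ by a unit multiple only rescales the identification $N_x\cong{\mathcal O}_{C_x}$ by a unit of ${\mathcal O}_{C_x}$, hence fixes the submodule ${\bf m}_x^{e_0}$; so the correspondence is canonical. I expect the only genuine content to be the translation of the lifting condition into the colon ideal $({\bf m}_x^{e_0+1}:{\bf m}_x)$ together with the appeal to Cohen--Macaulayness of the tangent cone of a plane curve singularity; the remaining steps are formal.
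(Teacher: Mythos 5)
Your proposal is correct and follows exactly the route the paper intends: the corollary is stated without proof precisely because it is the specialization of the two preceding propositions to the principal ideal case, where $s=1$, $v^1(I_x)=e_0$, $N_x\cong{\mathcal O}_{C_x}$ via evaluation at $\bar f$, and the Cohen--Macaulayness of the hypersurface tangent cone gives $({\bf m}_x^{e_0+1}:{\bf m}_x)={\bf m}_x^{e_0}$, so that $T_x\cong N_x'\cong{\bf m}_x^{e_0}$. (Only cosmetic remark: $I_x$ is principal because it is an unmixed height-one ideal in the UFD ${\bf k}[[X_1,X_2]]$, not because it is prime, but this does not affect the argument.)
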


\medskip
\medskip
We will end this paper studying the local structure  of the closed
points of $\MS$.

\medskip
\begin{definition}
Let $C$ be a curve singularity with Hilbert polynomial $p(T)$. We
say that $C$, or the corresponding closed point $x$ of $\MS$, is
non-obstructed if the functor $ \underline{H}_{N,p(T)}^{x}$ is
smooth, i.e. for all epimorphism of local finitely generated
Artinian ${\bf k}$-algebras $h: A \longrightarrow A^{'}$ the set map
$$
\xymatrix@C=2cm{
\underline{H}_{N,p(T)}^{x}(Spec(A))
\ar[r]^{\underline{H}_{N,p(T)}^{x}(h)} &
\underline{H}_{N,p(T)}^{x}(Spec(A^{'}))\\
}
$$
\noindent is
surjective.
\end{definition}

\medskip
Let $\kmod$ be the category of ${\bf k}$-modules. Given a ${\bf
k}$-module $W$ we will denote by $W^{*}=Hom_{{\bf k}}(W,{\bf k})$
its dual space. It is well known that there exists a natural
monomorphism $ W \longrightarrow W^{*}, $ that it is isomorphism
if and only if $W$ is a finite dimensional ${\bf k}$-module.

Following  Laudal, \cite{Lau79}, pag. 102, we will consider each
object of $\kmod$ endowed with a topology that will induce its
reflexivity. Let $W$ be an object of $\kmod$, pick a ${\bf k}$-base
${\mathcal V}=\{v_{i}\}_{i \in I}$. We put on $W$ the topology for
which a basis of neighborhoods of the neutral elements consists of
the subspaces of $W$ containing all but a finite number of the
elements of ${\mathcal V}$. We will denote by $\ktopmod$ the
corresponding category of topological ${\bf k}$-modules. Given an
object of $\ktopmod$. We will denote by $W^{\circ}$ the topological
dual of $W$. It is easy to see that the dual basis of ${\mathcal V}$
defines a topology on $W^{\circ}$ such that $ W \cong W^{\circ
\circ}. $

We know that for all closed point $x$ of $\MS$ it holds $$ {\mathcal
O}_{\MS , x} \cong \ilim \pi ^{*}_{n}({\mathcal O}_{\Xi _{n},
x_{n}}), $$ \noindent where $x_{n}=\pi _{n}(x)$, \cite{EGA-IV},
8.2.12.1. In particular we have $ {\bf n}_{x} \cong \ilim \pi
^{*}_{n}({\bf n}_{x_{n}}), $ where ${\bf m}_{x_{n}}$ is the maximal
ideal of ${\mathcal O}_{\Xi _{n}, x_{n} }$. For all $n \ge e_0+1$ we
pick a ${\bf k}-$basis ${\mathcal V}_{n}=\{e_{j}\}_{j \in {\mathcal
J}_{n}}$ of ${\bf m}_{x_{n}}$, such that ${\mathcal J}_{n} \subset
{\mathcal J}_{n+1}$, and ${\mathcal V}_{n} \subset {\mathcal
V}_{n+1}$. Notice that $ {\mathcal V} = \ilim {\mathcal V}_{n}, $ is
a ${\bf k}-$basis of ${\bf n}_{x}$, and ${\mathcal V} \cup \{1\}$ is
a ${\bf k}$-basis of ${\mathcal O}_{\MS , x}$. We write ${\mathcal
J} = \ilim {\mathcal J}_{n}.$
 From now on we
will consider ${\mathcal O}_{\MS , x}$ endowed with the topology for
which a basis of neighborhoods of the neutral element are the ideals
containing all but a finite number of elements of ${\mathcal V}$.
This topology will permit us to characterize the non-obstructiveness
of the closed points of $\MS$, see \thmref{non-obs}. We will denote
by ${\mathcal O}_{\MS , x}^{+}$ its completion with respect the
topology defined above. This topology induces a topology on ${\bf
n}_{x}/{\bf n}_{x}^{2}$, making this ${\bf k}$-module an object of
$\ktopmod$. Moreover, this topology induces also a topology on the
tangent space $T_{x}$, in which the neighborhoods of the neutral
element of $T_{x}$ are the linear maps $ \omega : {\bf n}_{x}/{\bf
n}_{x}^{2} \longrightarrow {\bf k} $ whose kernel contains all but a
finite number of elements of ${\mathcal V}$.

\medskip
\noindent Let us consider the ${\bf k}$-algebra morphism $\varphi :
{\bf k}[T_{i},i \in {\mathcal V}] \longrightarrow {\mathcal
O}_{\MS,x} $ such that $\varphi (T_{i})=e_{i}$, $i \in {\mathcal
J}$. Notice that $ \varphi_{2} : {\bf k}[T_{i},i \in {\mathcal V}]
/(T_{i})^{2} \longrightarrow {\mathcal O}_{\MS,x}/{\bf n}_{x}^{2} $
is an isomorphism of ${\bf k}$-modules. We will consider in  ${\bf
k}[T_{i},i \in {\mathcal V}]$ the topology for which a basis of the
neutral element are the ideals $I$ contained in $(T_{i})$ such that
all but a finite number of $T_{i}$ belongs to $I$. We will denote by
${\bf k}[T_{i},i \in {\mathcal V}]^{+}$ the completion of ${\bf
k}[T_{i},i \in {\mathcal V}]$ with respect this topology. Notice
that $\varphi $ is a continuous map with respect to the topologies
defined above.

\medskip
\begin{theorem}
\label{non-obs} A closed point $x$   of $\MS$ is  a non-obstructed
point if and only if
$$\varphi ^{+} : {\bf k}[T_{i}; i\in I]^{+}
\longrightarrow {\mathcal O}_{\MS, x}^{+}$$
\noindent is an
isomorphism of ${\bf k}$-algebras.
\end{theorem}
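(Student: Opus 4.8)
The plan is to characterize non-obstructedness in terms of the formal smoothness of the local ring, and then to translate this into a statement about the completed map $\varphi^+$. First I would recall that, since $\underline{H}_{N,p(T)}^x$ is pro-represented by $Spec({\mathcal O}_{\MS,x})$, the functor is smooth precisely when ${\mathcal O}_{\MS,x}$ is a formally smooth $\res$-algebra for the relevant class of test objects — i.e.\ when every small extension of Artinian local $\res$-algebras can be lifted. The standard deformation-theory dictionary (as in Laudal, \cite{Lau79}) says this is equivalent to the obstruction space vanishing, which in turn is equivalent to saying that a minimal presentation of the complete local ring has no relations: a (pro-)power-series ring surjects onto ${\mathcal O}_{\MS,x}$ in a way that is in fact an isomorphism. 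The subtlety here, and the reason the theorem is stated with the topologies of \cite{Lau79}, is that ${\mathcal O}_{\MS,x}$ is not Noetherian — it is a filtered colimit $\ilim \pi_n^*({\mathcal O}_{\Xi_n,x_n})$ of local rings of finite-type schemes — so the usual complete-local-ring machinery must be replaced by its topological analogue on the category $\ktopmod$.

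The key steps, in order, would be: (1) Use the isomorphism $\varphi_2 : \res[T_i, i\in{\mathcal V}]/(T_i)^2 \to {\mathcal O}_{\MS,x}/{\bf n}_x^2$ to see that $\varphi$ induces an isomorphism on (topological) cotangent spaces; hence $\varphi^+$ is surjective, and by a topological Nakayama argument for $\ktopmod$-modules, $\varphi^+$ is surjective with kernel ${\mathfrak a}$ contained in the square of the maximal ideal. (2) Identify the obstruction to lifting deformations with the dual of ${\mathfrak a}/{\mathfrak m}\,{\mathfrak a}$, using the characterization of first-order deformations via \propref{chardefs} and \propref{chardefs}'s consequence $T_x \cong N_x'$ to pin down the tangent functor, and then the standard fact that for the functor pro-represented by a quotient $\res[T_i]^+/{\mathfrak a}$ the obstruction space is $({\mathfrak a}/{\mathfrak m}\,{\mathfrak a})^\circ$ in the topological dual. (3) Conclude: $x$ is non-obstructed $\iff$ the obstruction space vanishes $\iff$ ${\mathfrak a} = {\mathfrak m}\,{\mathfrak a}$ $\iff$ (topological Nakayama again) ${\mathfrak a} = 0$ $\iff$ $\varphi^+$ is an isomorphism of $\res$-algebras. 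For the two implications I would run the argument on each finite level $\Xi_n$ — where everything is genuinely Noetherian — and pass to the colimit/limit using that $a_n$ is affine for $n \ge e_0+4$ (\propref{fibration}) and the compatibility ${\bf n}_x \cong \ilim \pi_n^*({\bf n}_{x_n})$, so the topological completions behave well.

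The main obstacle I expect is the non-Noetherianness: the clean equivalence "formal smoothness $\iff$ trivial obstruction $\iff$ minimal presentation has no relations" rests on Nakayama's lemma and on the fact that a minimal set of generators of the maximal ideal lifts to a minimal presentation, and neither of these is automatic for the colimit ring ${\mathcal O}_{\MS,x}$. The whole point of introducing the topology on $\kmod$ for which a neighborhood basis at $0$ consists of subspaces omitting only finitely many basis vectors — and of working in $\ktopmod$ with the reflexivity $W \cong W^{\circ\circ}$ — is to recover exactly these two facts in topological form. So the real work is to prove a \emph{topological Nakayama lemma} in this setting (if ${\mathfrak a} = {\mathfrak m}\,{\mathfrak a}$ with ${\mathfrak a}$ closed, then ${\mathfrak a} = 0$) and to check that $\varphi^+$ really is the completion of a map admitting a minimal presentation levelwise, so that $\ker\varphi^+$ is generated by (the closure of) a set in bijection with a $\res$-basis of the obstruction space. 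Once those topological foundations are in place, the equivalence in the theorem follows formally from the deformation-theoretic interpretation of obstructions together with the pro-representability statement.
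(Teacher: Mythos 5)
Your overall strategy --- reduce non-obstructedness to the vanishing of the kernel ${\mathfrak a}$ of a minimal presentation, via the dictionary ``smooth $\iff$ obstruction space $({\mathfrak a}/{\mathfrak m}{\mathfrak a})^{\circ}$ vanishes $\iff$ ${\mathfrak a}=0$'' --- is sound in spirit, but as written the proposal defers to two unproved claims that are precisely the content of the theorem in this non-Noetherian setting: a topological Nakayama lemma for the colimit ring ${\mathcal O}_{\MS,x}$, and the identification of the obstruction space of $\underline{H}_{N,p(T)}^{x}$ with $({\mathfrak a}/{\mathfrak m}{\mathfrak a})^{\circ}$. Even in the classical Noetherian case the implication ``smooth $\Rightarrow$ ${\mathfrak a}=0$'' is not formal: it uses the Krull intersection theorem to find an $s$ with ${\mathfrak a}\not\subset {\mathfrak m}{\mathfrak a}+{\mathfrak m}^{s}$ and then builds a small extension detecting a nonzero obstruction. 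Neither Krull intersection nor Nakayama is available off the shelf for the non-finitely-generated ideal ${\mathfrak a}\subset {\bf k}[T_i;i\in I]^{+}$, and you flag the topological Nakayama lemma yourself as ``the real work''. A proof that reduces the statement to unproved lemmas of comparable difficulty has a genuine gap.

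The paper's proof avoids this machinery entirely. It observes that the ideals $I_n$ generated by the $T_i$ with $i\in{\mathcal J}\setminus{\mathcal J}_n$, and their images $W_n\subset{\mathcal O}_{\MS,x}$, are cofinal in the chosen topology, so $\varphi^{+}$ is an isomorphism if and only if the induced map $\varphi_n: S/I_n\to{\mathcal O}_{\MS,x}/W_n$ between \emph{finitely generated} $\res$-algebras is an isomorphism for $n$ large. If some $\varphi_n$ has a nonzero element $F$ in its kernel, one truncates by a power $(T_i)^{s}$ large enough that $F$ survives, obtaining a surjection $h:A'\to A$ of Artinian local $\res$-algebras with nontrivial kernel; smoothness of $\underline{H}_{N,p(T)}^{x}$ lifts the projection ${\mathcal O}_{\MS,x}\to A$ through $h$, and the isomorphism $\varphi_2$ on the level of ${\bf n}_x/{\bf n}_x^{2}$ forces the lift to descend to a section of $h$, whence $h$ is an isomorphism --- a contradiction. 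This single lifting against an explicit Artinian quotient replaces both your Nakayama step and your obstruction-space identification. If you want to keep your route, the honest way to prove your two lemmas is exactly this cofinal reduction to finite level, at which point the detour through $({\mathfrak a}/{\mathfrak m}{\mathfrak a})^{\circ}$ becomes unnecessary.
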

\begin{proof}
We will write $S={\bf k}[T_{i},i \in {\mathcal V}]$, and ${\mathcal
O}_{x}={\mathcal O}_{\MS,x}$. We will denote by $I_{n}$ the ideal of
$S$ generated for  $T_{i}$, $i \in {\mathcal J} \setminus {\mathcal
J}_{n}$; and we will denote by $W_{n}$ the corresponding ideal of
${\mathcal O}_{x}$. Hence $\varphi $ induces an epimorphism of ${\bf
k}$-vector spaces $ \varphi_{n} : S/I_{n} \longrightarrow {\mathcal
O}_{x}/W_{n}. $ Since the set of $\{I_{n}\}_{n}$, resp. $W_{n}$, are
cofinal in  the basis of neighborhoods of $S$, resp. ${\mathcal
O}_{x}$, we get that $\varphi ^{+}$ is an isomorphism of ${\bf
k}$-algebras if and only if $\varphi _{n}$ is an isomorphism for a
$n$ big enough.

Let us assume that $x$ is non-obstructed and that $\varphi _{n}$
is not an isomorphism; let $F$ be a non-zero element of $S/I_{n}$
belonging to the Kernel of $\varphi _{n}$. Let  $s$ be an integer
such that the coset of $F$ in $A^{'}=S/I_{n}+(T_{i})^{s}$
is non-zero. Let $h$ be morphism induced by $\varphi
_{n}$
$$ h : A^{'}=S/I_{n}+(T_{i})^{s} \longrightarrow A={\mathcal
O}_{x}/W_{n}+{\bf n}_{x}^{s}. $$ Let us consider the projection $\pi
: {\mathcal O}_{\MS,x} \longrightarrow A$. Since $x$ is a
non-obstructed point  we get that there exists a morphism $\sigma
:{\mathcal O}_{\MS,x} \longrightarrow A^{'}$ such that the following
diagram is commutative

$$
\xymatrix{
{\mathcal O}_{\MS, x}    \ar[d]_{\relax \sigma}   \ar[dr]^{\relax \pi} &         \\
A^{'}  \ar[r]_{h}                       & A
}
$$

\noindent Since $\varphi _{2}$ is an isomorphism we get that the
above diagram induces

$$
\xymatrix{
A  \ar[d]_{\relax \overline{\sigma}}   \ar[dr]^{\relax \overline{\pi}=Id} &         \\
A^{'}  \ar[r]_{h}                       & A
}
$$

\noindent so $h$ is an isomorphism and we get a contradiction.

If $\varphi ^{+}$ is an isomorphism then it is easy to see that
$x$ is non-obstructed.
\end{proof}

\medskip
\begin{proposition}
(i)  Every closed point $x$ of ${\bf H}_{2,p(T)}$, with $p(T)= e_0T-e_0(e_0-1)/2$,
 is non-obstructed.
In particular ${\bf H}_{2,p(T)}$ is reduced.

\noindent (ii) Given an integer $a \ge 2$ we put
$p_{a}=\binom{a}{2 } T - 2 \binom{a}{3}$. Then every closed point $x$ of
${\bf H}_{3,p_{a}}$ is non-obstructed, and  ${\bf
H}_{3,p_{a}}$ is reduced.
\end{proposition}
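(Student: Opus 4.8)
The plan is to show that, in both cases, the functor $\underline{H}_{N,p(T)}^{x}$ is formally smooth --- which is precisely the non-obstructedness of $x$ --- and then to read off reducedness from \thmref{non-obs}.

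First I would reduce the two statements to one common situation. In case (i) every closed point of ${\bf H}_{2,p(T)}$ is a plane curve singularity of a fixed multiplicity $e_{0}$, so $b=N=2$, the Hilbert function is the (unique, hence maximal) one, and $e_{0}=\binom{b-1+r}{r}$ with $r=e_{0}-1$. In case (ii), $p_{a}$ is the admissible polynomial attached to the maximal Hilbert function in embedding dimension $3$ (equivalently $e_{1}=\rho_{0,3,e_{0}}$ with $e_{0}=\binom{a}{2}$), which is the minimum of the admissible values of $e_{1}$ and is attained only there; hence every closed point of ${\bf H}_{3,p_{a}}$ has embedding dimension $b=N=3$, maximal Hilbert function, and $e_{0}=\binom{b-1+r}{r}$ with $r=a-2$. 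In either case the Proposition just before the corollary $T_{x}\cong{\bf m}_{x}^{e_{0}}$ supplies a standard basis $f_{1},\dots,f_{s}$ of $I_{x}$ made of $s=\binom{b-1+r}{r+1}$ forms of degree $r+1$ --- so $v^{i}(I_{x})=r+1$ for all $i$, with $s=1$ if $b=2$ and $s=a$ if $b=3$ --- with $Gr_{{\bf m}}(\mathcal O_{C_{x}})=S/I_{x}^{*}$ Cohen--Macaulay and $({\bf m}_{x}^{e_{0}+1}:{\bf m}_{x}^{e_{0}+1-v^{i}})={\bf m}_{x}^{r+1}$. Since for $b\le 3$ the ideal $I_{x}^{*}$ has codimension $\le 2$, by Hilbert--Burch it is (for $b=3$) the ideal of maximal minors of an $s\times(s-1)$ matrix of linear forms; this determinantal structure, which has no analogue in higher embedding dimension, is what I expect to make the whole argument go through for $N\le 3$ only.

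The key step --- and the part I expect to be the main obstacle --- is an Artinian-base version of \propref{chardefs}: for an Artinian local $\res$-algebra $A$ with maximal ideal $\mathfrak{m}_{A}$, the scheme $Spec(A[[\underline X]]/(g_{1},\dots,g_{s}))$ is a family of curve singularities with Hilbert polynomial $p$ and closed fibre $C_{x}$ \emph{if and only if} $g_{i}=f_{i}+h_{i}$, where each $h_{i}\in A[[\underline X]]$ has order $\ge r+1$ and all coefficients in $\mathfrak{m}_{A}$. For the ``if'' direction I would argue exactly as in \thmref{big-lifting}: the $h_{i}$ do not lower the order, one lifts the syzygies of the $g_{i}^{*}$ to syzygies of the $g_{i}$ --- immediate for $b\le 3$ because those syzygies are the columns of a Hilbert--Burch matrix, which one simply perturbs compatibly --- and then Robbiano--Valla's criterion \cite{RV80} shows the $g_{i}$ form a standard basis. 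Hence $Gr_{(\underline X)}(A[[\underline X]]/(g_{i}))$ is flat over $A$ with closed fibre $S/I_{x}^{*}$, so by the relation between normal flatness and flatness of truncations established in Section~2 every truncation $Z_{n}$ is flat over $A$ with fibre of length $p(n-1)$, and \propref{critfam} then gives that we have a family with Hilbert polynomial $p$ and closed fibre $C_{x}$. The ``only if'' direction (which I will not actually need) follows from the syzygy flatness criterion as in \propref{chardefs}, using $({\bf m}_{x}^{e_{0}+1}:{\bf m}_{x}^{e_{0}+1-v^{i}})={\bf m}_{x}^{r+1}$; together with the bijection $T_{x}\cong N_{x}'$ it also re-derives the corollary $T_{x}\cong{\bf m}_{x}^{e_{0}}$.

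Granting the characterization, formal smoothness is immediate: given a surjection $\pi\colon A\to A'$ of Artinian local $\res$-algebras and $Z'\in\underline{H}_{N,p(T)}^{x}(Spec\,A')$, write $I(Z')=(f_{1}+h_{1}',\dots,f_{s}+h_{s}')$ as above and lift each coefficient of each $h_{i}'$ along $\mathfrak{m}_{A}\twoheadrightarrow\mathfrak{m}_{A'}$ to get $h_{i}$ of order $\ge r+1$ with coefficients in $\mathfrak{m}_{A}$ and $h_{i}\mapsto h_{i}'$; by the characterization $Z:=Spec(A[[\underline X]]/(f_{1}+h_{1},\dots,f_{s}+h_{s}))$ lies in $\underline{H}_{N,p(T)}^{x}(Spec\,A)$ and restricts to $Z'$, so $\underline{H}_{N,p(T)}^{x}(\pi)$ is surjective and $x$ is non-obstructed. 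By \thmref{non-obs}, $\varphi^{+}\colon\res[T_{i};i\in I]^{+}\to\mathcal O_{\MS,x}^{+}$ is then an isomorphism; since $\res[T_{i};i\in I]$ embeds into its completion (the basic ideals of the topology have zero intersection) and $\varphi$ is surjective, it follows that $\varphi$ itself is an isomorphism, so $\mathcal O_{\MS,x}\cong\res[T_{i};i\in I]$ is a reduced ring. As this holds at every closed point of ${\bf H}_{2,p(T)}$ (resp. ${\bf H}_{3,p_{a}}$), and a ring all of whose localizations at maximal ideals are reduced is itself reduced, ${\bf H}_{2,p(T)}$ (resp. ${\bf H}_{3,p_{a}}$) is reduced.
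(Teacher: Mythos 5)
Your treatment of case (i) is essentially the paper's argument and is fine: a family over an Artinian base with closed fibre a plane branch is cut out by a single power series of order $e_0$, and lifting that series along $A\to A'$ is unobstructed (with $s=1$ there are no syzygies to control). The gap is in case (ii), and it sits exactly in the step you yourself flag as the main obstacle: the claimed Artinian-base characterization of families is false as soon as $s\ge 2$. You assert that $Spec(A[[\underline X]]/(f_1+h_1,\dots,f_s+h_s))$ is a family whenever each $h_i$ has order $\ge r+1$ and coefficients in $\mathfrak m_A$. Already over $A=\res[\varepsilon]$ such a perturbation need not even be flat, because the syzygies of the $f_i$ need not lift. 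Concretely, for the monomial curve $(t^3,t^4,t^5)$ (which lies in ${\bf H}_{3,p_3}$, $p_3=3T-2$, $r=1$) the ideal is generated by the maximal minors $\Delta_1=X_2X_3-X_1^3$, $\Delta_2=X_1^2X_2-X_3^2$, $\Delta_3=X_1X_3-X_2^2$ of a $3\times 2$ matrix, with syzygy $X_2\Delta_1+X_1\Delta_2+X_3\Delta_3=0$; the perturbation $(\Delta_1+\varepsilon X_1^2,\ \Delta_2,\ \Delta_3)$ has $h_1=\varepsilon X_1^2$ of order $2=r+1$ with coefficients in $\mathfrak m_A$, yet it is not flat over $\res[\varepsilon]$, since lifting that syzygy would force $X_1^2X_2\in I$, which fails ($X_1^2X_2\mapsto t^{10}\ne 0$ on the curve). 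This is precisely why \propref{chardefs} \emph{assumes} that $J$ defines a (flat) first order deformation before imposing the order condition: membership in the normal module, i.e.\ compatibility with the syzygies, is an extra constraint that your characterization drops. Consequently the coefficient-wise lift of the $h_i'$ at the end of your argument may leave the functor, and the sentence about lifting syzygies by ``perturbing the Hilbert--Burch matrix compatibly'' is circular: an arbitrary perturbation of the generators is not given by the minors of any perturbed matrix, and whether its syzygies lift is exactly what is in question.

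The paper's proof avoids this by lifting the \emph{matrix} rather than the generators: since $I_x$ has a length-one free resolution $0\to R^{a-1}\to R^{a}\to I_x\to 0$, the family over $A'$ is cut out by the maximal minors of an $a\times(a-1)$ matrix over $A'[[\underline X]]$, and lifting the entries of that matrix to $A$ automatically yields a flat determinantal deformation whose truncations behave correctly; no characterization of all families is needed. To repair your argument you would have to replace ``arbitrary $h_i$ of order $\ge r+1$'' by ``$h_i$ arising from a perturbation of the Hilbert--Burch matrix by entries with coefficients in $\mathfrak m_A$'', at which point you have reproduced the paper's proof. Your closing deduction of reducedness from \thmref{non-obs} is reasonable, but you should either justify the surjectivity of $\varphi$ or argue directly that ${\mathcal O}_{\MS,x}$ injects into ${\mathcal O}_{\MS,x}^{+}\cong \res[T_i; i\in I]^{+}$, which is reduced.
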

\begin{proof}
(i) Let $h: A \longrightarrow A^{'}$ be a epimorphism of local
finitely generated local ${\bf k}$-algebras; we need to prove that
$\underline{H}_{2,p(T)}^{x}(h)$ is surjective. We write
$S=Spec(A)$ and $S^{'}=Spec(A^{'})$.

Given a family $Z^{'} \in \underline{H}_{2,p(T)}^{x}(S)$ there
exists $F \in A^{'}[[X_{1},X_{2}]]$ such that
$Z^{'}=Spec(A^{'}[[X_{1},X_{2}]]/(F)).$
Let $G \in A[[X_{1},X_{2}]]$ be a power series of order
$e$ and such that $h(G)=F$. Then it is easy to prove that the
family $Z=Spec(A[[X_{x},X_{2}]]/(G))$ verifies
$\underline{H}_{2,p(T)}^{x}(h)(Z)=Z^{'}$.

\noindent (ii) Let $h: A \longrightarrow A^{'}$ be a epimorphism
of local finitely generated local ${\bf k}$-algebras; we write
$S=Spec(A)$ and $S^{'}=Spec(A^{'})$. Let  $Z^{'}$ be a family of
curve singularities of ${({\bf k}^3,0)}$ over $S^{'}$ with Hilbert
polynomial $$p_{a}(T)= \binom{a}{2 } T - 2 \binom{a}{3}.$$

\noindent Let $x$ be the closed point of ${\bf H}_{3,p_{a}(T)}$
defined by the closed fiber of $Z^{'}$. From Corollary 4.5, we get
that $v(I_{x})=a$ and $I_{x} \subset R={\bf
k}[[X_{1},X_{2},X_{3}]]$ admits a minimal free resolution $$ 0
\longrightarrow R^{a-1} \stackrel{M}{\longrightarrow} R^{a}
\longrightarrow I_{x} \longrightarrow 0 $$

\noindent with $M$ a matrix with entries belonging to the maximal
ideal of $R$ of order $1$. Hence there exists an $a \times (a-1)$
matrix $\overline{M}$ with coefficients in the maximal ideals of
$A[[X_{1},X_{2},X_{3}]]$, such that its maximal minors generates
and ideal, say $I^{'}$, with
$Z^{'}=Spec(A^{'}[[X_{1},X_{2},X_{3}]]/I^{'})$
and $\overline{M}$ is mapped to $M$ by the natural
morphism of ${\bf k}-$algebras
$A^{'}[[X_{1},X_{2},X_{3}]]\longrightarrow {\bf
k}[[X_{1},X_{2},X_{3}]].$
 Let $N$ be an $a\times (a-1)$ matrix
such that its entries belong to the maximal ideal of $A$ and are
mapped to the entries of $\overline{M}$ by $h$. Let $I$ be the
ideal of $A[[X_{1},X_{2},X_{3}]]$ generated by the maximal minors
of $M$; we put $Z=Spec(A[[X_{1},X_{2},X_{3}]]/I)$. It is easy to
see that $Z \in \underline{H}_{3,p_{a}(T)}^{x}(S)$, where
$S=Spec(A)$, and $Z^{'}=\underline{H}_{3,p_{a}(T)}^{x}(h)(Z).$
\end{proof}


\end{document}